\documentclass[10pt, reqno]{amsart}
\usepackage[utf8]{inputenc}

\usepackage{style}
\title{Fractional coloring via entropy}

\author{Abhishek Dhawan}
\address{Department of Mathematics, University of Illinois Urbana--Champaign}
\email{adhawan2@illinois.edu}
\thanks{Partially supported by NSF RTG grant DMS-1937241.}

\date{}

\begin{document}

\begin{abstract}
    In recent work, Martinsson and Steiner proved that triangle-free $d$-degenerate graphs have fractional chromatic number $\chi_f(G) = O\left(\frac{d}{\log d}\right)$. Here, we introduce an alternate proof of the bound rooted in the analysis of the entropy of certain random variables. Beyond simplifying the original argument, our technique naturally generalizes to broader settings. In this paper, we focus on two extensions of the result.
    
    First, we consider locally $r$-colorable graphs $G$, where $\chi(G[N(v)]) \le r$ for every vertex $v$. We show that $d$-degenerate locally $r$-colorable graphs satisfy $\chi_f(G) = O\left(\frac{d\log (2r)}{\log d}\right)$, strengthening a result of Alon (1996) on the independence number of such graphs.
    
    Second, we extend Martinsson and Steiner's result to $r$-uniform $d$-degenerate hypergraphs $H$ with girth at least $4$, showing that $\chi_f(H) \le c_r\left(\frac{d}{\log d}\right)^{\frac{1}{r-1}}$ for a constant $c_r > 0$. This yields a strict generalization of a seminal result of Ajtai, Koml\'os, Pintz, Spencer, and Szemer\'edi (1982) on the independence number of uncrowded hypergraphs. Via random sampling as introduced by Duke, Lefmann, and R\"odl (1995), we obtain the same asymptotic bound for $d$-degenerate linear hypergraphs. As a corollary, we establish a recent conjecture of Verstra\"ete and Wilson (2026) on the independence number of linear hypergraphs.
    
    Our arguments generalize to the setting of fractional colorings with local demands, introduced by Kelly and Postle (2024). This yields previously unknown degree-sequence lower bounds on the independence number across each setting considered. Furthermore, our approach is constructive, yielding efficient randomized algorithms for sampling independent sets in these contexts.
\end{abstract}

\maketitle

\pagenumbering{gobble}
\tableofcontents
\newpage
\pagenumbering{arabic}

\section{Introduction}\label{section: intro}

\subsection{Background and main results}\label{subsection: background}

The fractional chromatic number of a graph $G$, denoted $\chi_f(G)$, is the minimum integer $k \ge 1$ such that there exists a probability distribution $\mathcal{D}$ over the independent sets of $G$ satisfying $\Pr_{I \sim \mathcal{D}}[v \in I] \ge 1/k$ for each vertex $v \in V(G)$.
It is well known through standard greedy arguments that
\[\chi_f(G) \le \chi(G) \le d(G) + 1 \le \Delta(G) + 1,\]
where $\Delta(G)$ and $d(G)$ denote the maximum degree and degeneracy of $G$, respectively.
Brooks's Theorem~\cite{brooks1941colouring} implies that equality holds throughout if and only if some connected component of $G$ is isomorphic to the complete graph $K_{\Delta(G) + 1}$.

A natural question arises: under what structural constraints can we obtain improved bounds on $\chi_f(\cdot)$?
A seminal result of Johansson~\cite{Joh_triangle} states that every $K_3$-free graph $G$ of maximum degree $\Delta$ has chromatic number $O(\Delta/\log\Delta)$ (see~\cite{Molloy, bernshteyn2019johansson, DKPS} for recent proofs with improved leading constants).
Shortly thereafter, Alon, Krivelevich, and Sudakov~\cite{AKSConjecture} generalized this result to \emph{locally sparse graphs}, i.e., graphs satisfying $e(G[N(v)]) \ll \Delta^2$ for all $v \in V(G)$.
They conjectured that an analogous bound holds for all $F$-free graphs, spurring extensive research over the past three decades; see, e.g.,~\cite{DKPS, anderson2024coloring, anderson2025coloring, dhawan2025bounds, AndersonBernshteynDhawan, dhawan2024palette, PS15, bonamy2022bounding, Kttt}.

Each of these results immediately implies bounds on the fractional chromatic number in terms of maximum degree.
Substantial work has also targeted bounds on $\chi_f(\cdot)$ in terms of other parameters, such as the average degree or local versions~\cite{DKPS, davies2018average, dhawan2025bounds, martinsson2025random, kelly2024fractional}.
In this paper, we focus on the degeneracy of a graph.
A graph $G$ is $d$-degenerate if every subgraph of $G$ contains a vertex of degree at most $d$.
Equivalently, there exists an ordering $\pi$ of $V(G)$ such that each vertex $v$ has at most $d$ predecessors under $\pi$.
The starting point of our investigation is the following recent result of Martinsson and Steiner~\cite{martinsson2025random}, which resolved a conjecture of Harris~\cite{harris2019some}:

\begin{theorem}[\cite{martinsson2025random}]\label{theo: martinsson steiner}
    Let $G$ be an $n$-vertex $d$-degenerate $K_3$-free graph.
    Then, $\chi_f(G) \le (4 + o(1))\frac{d}{\log d}$.
\end{theorem}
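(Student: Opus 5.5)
We construct the distribution on independent sets by a sequential random process along a degeneracy ordering, and bound the marginals with an entropy (counting) argument. Fix an ordering $\pi=(v_1,\dots,v_n)$ in which each vertex has at most $d$ predecessors, and write $N^-(v)$ for the set of predecessors of $v$. Set $k=(4+\varepsilon)d/\log d$ and process vertices in reverse order $v_n,\dots,v_1$. Each vertex $v$ has an independent uniform ``weight'' in $[k]$, and we build $I$ so that $v$ joins $I$ unless it is blocked by an already-chosen later neighbor. Equivalently, we want a random independent set $I$ in which each $v$, conditioned on the choices made at its successors, enters $I$ with probability at least $1/k$.

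Concretely, I would follow the Martinsson--Steiner framework as it is rephrased in this paper via entropy. Let $\mathbf{I}$ be a uniformly random independent set of a suitable random induced subgraph, or more simply of $G$ weighted by the hard-core measure at fugacity $\lambda$. The quantity to control is $\Pr[v\in\mathbf{I}]$. The standard local argument conditions on $\mathbf{I}\setminus(\{v\}\cup N^-(v))$ and looks at the set $U$ of predecessors of $v$ that are unblocked, i.e.\ that have no neighbor in $\mathbf{I}$ outside $N^-(v)\cup\{v\}$. Since $G$ is triangle-free, $U$ is an independent set, so the conditional law on $\{v\}\cup U$ is explicit. We get $\Pr[v\in\mathbf{I}\mid\cdot]=\lambda/(\lambda+(1+\lambda)^{|U|})$, and the expected size satisfies $\mathbb{E}|\mathbf{I}\cap U|=|U|\lambda/(1+\lambda)\cdot\Pr[v\notin \mathbf{I}\mid\cdot]$. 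The difficulty is that degeneracy controls only in-degree, so the usual double-counting of $\sum_{u\in N(v)}\Pr[u\in\mathbf{I}]$ fails: a vertex has few predecessors but possibly huge out-degree.

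This is where entropy enters. I would define the random variable $X_v=\mathbf{1}[v\in I]$ and the vector $X_{N^-(v)}$. The aim is a per-vertex inequality of the form
\[
\log k\cdot \Pr[v\in I]\ \gtrsim\ H(X_v\mid X_{>v}) \quad\text{and}\quad \sum_{u\in N^-(v)} H(X_u\mid X_{>u})\le d\cdot(\ldots),
\]
using chain rule $H(X)=\sum_v H(X_v\mid X_{v_{>}})$ along $\pi$. Each vertex $v$ contributes entropy only when it is unblocked, and an unblocked $v$ then contributes roughly $\log$ of the number of choices. Compare the process with a distribution $\mathcal{D}$ that maximizes $\min_v\Pr[v\in I]$. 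By LP duality (fractional chromatic number equals fractional clique-type dual), it suffices to show that for every weighting $w$ on $V(G)$ with $\sum w=1$ some independent set has $w$-weight at least $\log d/((4+o(1))d)$. Induction on the degeneracy order and the choice of $I$ via the sequential process give this provided the tail bound holds.

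The key step, and the main obstacle, is showing that a random vertex is unblocked with probability bounded below in a way compatible with $d$ predecessors. The plan is to take $t=\log d/2$ scale and show that if $v$ is blocked with probability $>1-d^{-1/2+\varepsilon}$ then, through triangle-freeness and the chain rule, the predecessors of $v$ collectively carry more than $\log(1+\lambda)^{d}$ bits of entropy, contradicting $|N^-(v)|\le d$. Balancing $\lambda\approx d^{-1/2}$-type parameters then yields marginals $\ge(1/4-o(1))\log d/d$, giving the stated bound. I expect the precise entropy inequality replacing the degree double-count to be the delicate part; the remaining optimization of constants is routine.
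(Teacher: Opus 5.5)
There is a genuine gap. The proposal never commits to a distribution. The step that would actually bound its marginals (your ``$d\cdot(\ldots)$'' inequality and the ``delicate'' entropy step) is left open, and that step is the entire content of the theorem.

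Moreover, the concrete candidates you mention cannot work. The hard-core measure at any fugacity $\lambda$ fails for degeneracy bounds. $K_{d,M}$ is triangle-free and $d$-degenerate, yet a vertex on its side of size $d$ lies in the hard-core random independent set with probability $\lambda(1+\lambda)^{d-1}/\bigl((1+\lambda)^d+(1+\lambda)^M-1\bigr)\to 0$ as $M\to\infty$. Processing in reverse order $v_n,\dots,v_1$ exposes each vertex to all of its already-decided successors, of which there may be arbitrarily many; this is the opposite of what degeneracy controls. The stated goal that $v$ enters $I$ with probability at least $1/k$ conditioned on its successors' choices is impossible whenever a successor neighbor has been chosen. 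The LP-duality reformulation is correct but merely restates the theorem. The Shannon-entropy chain-rule inequalities on the indicators $X_v$, which again condition on the uncontrolled successor side, never produce the lower bound on $H(X_v\mid X_{>v})$ that would be needed.

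The missing idea is the paper's forward-order martingale weight process. Process $v_1,\dots,v_n$ in the degeneracy order, so each $v_k$ is influenced only by its at most $d$ left-neighbors. Each vertex carries a random weight $p_i(v)$ with $p_0(v)=\alpha=\log d/((2+\varepsilon/2)d)$, and $v_i$ enters $I$ with probability $p_{i-1}(v_i)$. Its right-neighbors' weights are then set to $0$ if it entered, and multiplied by $1/(1-p_{i-1}(v_i))$ otherwise. A cap $\hat p=\alpha^{\varepsilon/10}$ is enforced by an equalizing coin flip, and capped vertices are never selected. Then $p_i(v_k)$ is a martingale, so $\Pr[v_k\in I]=\alpha-\hat p\Pr[v_k\text{ capped}]$.

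The entropy used is $H_i(v_k)=-p_i(v_k)\log p_i(v_k)$ of the \emph{weight}, not of the indicators. A step at a left-neighbor $v_i$ lowers $\mathbb{E}[H]$ by at most $(1+o(1))\,p_{i-1}(v_i)p_{i-1}(v_k)$. Because $G$ is triangle-free, adjacent $v_i,v_k$ share no left-neighbor, so $p(v_i)p(v_k)$ is itself a martingale with mean $\alpha^2$. Summing over at most $d$ left-neighbors gives $\mathbb{E}[H_{k-1}(v_k)]\ge\alpha\log(1/\alpha)-(1+o(1))d\alpha^2$. Since nonzero weights never drop below $\alpha$, this forces $\hat p\log(\hat p/\alpha)\Pr[v_k\text{ capped}]\le(1+o(1))d\alpha^2$. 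For large $d$ this gives $\hat p\Pr[v_k\text{ capped}]\le\alpha/2$, whence $\Pr[v_k\in I]\ge\alpha/2=\log d/((4+\varepsilon)d)$.

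This is how the in-degree-only obstacle you correctly identified is bypassed. All entropy loss is charged to left-neighbors and evaluated exactly through martingales, with no double counting over out-neighbors.
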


It should be noted that there exist $d$-degenerate $K_3$-free graphs $G$ satisfying $\chi(G) = d+1$~\cite{kostochka1999properties, descartes1954solution}, showing that such bounds cannot hold for ordinary chromatic number in general.
However, when $n$ is not too large, Brada\v{c}, Fox, Steiner, Sudakov, and Zhang~\cite{bradavc2026coloring} recently established that $\chi(G) = O\left(\frac{d}{\log (d/\log n)}\right)$ for $n$-vertex $d$-degenerate $K_3$-free graphs.

To prove Theorem~\ref{theo: martinsson steiner}, Martinsson and Steiner introduced a novel dynamic sampling procedure.
They assign each vertex an initial weight $w_0(v)$, traverse the graph according to a degeneracy ordering, and select each vertex $v_i$ into the independent set with probability $f(w_{i-1}(v_i)) = 1 - e^{-w_{i-1}(v_i)}$, where $w_{i-1}(u)$ denotes the updated weight of $u$ prior to step $i$.
If $v_i$ is selected, the weights of all its unexplored neighbors are set to $0$.
Conversely, if $v_i$ is omitted, the weights of its unexplored neighbors are increased to boost their likelihood of future inclusion, ensuring that the vertex weight sequence forms a martingale (i.e., $\mathbb{E}[w_t(v_i)] = w_0(v_i)$).

To establish that $v_i$ is included in the independent set with sufficiently high probability, it suffices to show that $\mathbb{E}[f(w_{i-1}(v_i))]$ is large.
However, the non-linear nature of $f(\cdot)$ renders a direct analysis intractable.
To obtain the desired lower bound, their proof tracks an auxiliary weight process governed by a modified update rule (we note that this auxiliary process is distinct for each $v_i$; i.e., they define $n$ different auxiliary processes).
Letting $\tilde{w}$ denote the modified weights for the auxiliary process corresponding to $v_i$, they demonstrate that for $K_3$-free graphs, $\mathbb{E}[f(w_{i-1}(v_i))]$ remains close to $\mathbb{E}[f(\tilde{w}_{i-1}(v_i))]$ under an appropriate choice of initial weights.
Through several key technical insights, they bound $\mathbb{E}[f(\tilde{w}_{i-1}(v_i))]$ from below, completing the proof.\footnote{We note that the actual argument differs slightly from our description here; however, these differences are unimportant for our comparison.}

In this paper, we present an alternative proof of Theorem~\ref{theo: martinsson steiner} inspired by Johansson's seminal entropy-based method for coloring $K_3$-free graphs~\cite{Joh_triangle} (see~\cite[Ch.~13]{MolloyReed} for a textbook treatment).
A key advantage of our framework lies in our choice of sampling probability: whereas Martinsson and Steiner employ the non-linear function $f(w_{i-1}(v_i)) = 1 - e^{-w_{i-1}(v_i)}$, our procedure uses a strictly linear function $g(w_{i-1}(v_i))$.
Consequently, we can compute $\mathbb{E}[g(w_{i-1}(v_i))]$ directly, bypassing auxiliary processes entirely and yielding a significantly cleaner argument.

Whether Martinsson and Steiner's approach extends to broader contexts remains unclear, as controlling their non-linear activation inherently relies on $K_3$-freeness.
In contrast, our framework extends to broader graph and hypergraph settings in a straightforward manner, demonstrating its versatility.
In this paper, we highlight two such extensions (see \S\ref{subsection: overview} for an overview of the argument and \S\ref{section: K3} for the self-contained proof of Theorem~\ref{theo: martinsson steiner}).

First, we consider \emph{locally $r$-colorable} graphs, where the induced neighborhood $G[N(v)]$ of each vertex $v \in V(G)$ is $r$-colorable (with $r = 1$ corresponding to $K_3$-free graphs).
Alon~\cite{alon1996independence} first studied the independence number $\alpha(G)$ of such graphs, showing that any $n$-vertex locally $r$-colorable graph with maximum degree $\Delta$ satisfies $\alpha(G) = \Omega\left(\frac{n}{\Delta}\frac{\log \Delta}{\log (r + 1)}\right)$.
An analogous bound for the chromatic number was later proved by Bonamy, Kelly, Nelson, and Postle~\cite{bonamy2022bounding} (see also~\cite{DKPS} for improved constants).
We show that this bound extends to the fractional chromatic number with the maximum degree $\Delta$ replaced by the degeneracy $d$, directly strengthening Alon's result.

\begin{theorem}\label{theo: r colorable ordinary coloring}
    For all $\eps > 0$ and $r \in \mathbb{N}$, there exists $d_0 \in \mathbb{N}$ such that the following holds for $d \ge d_0$ and $n \in \mathbb{N}$.
    Let $G$ be an $n$-vertex $d$-degenerate locally $r$-colorable graph.
    Then, \[\chi_f(G) \le (8+\eps)\frac{d\,\log(2r)}{\log d}.\]
    Moreover, there is a $\mathrm{poly}(n, d)$-time algorithm that samples an independent set $I \subseteq V(G)$ such that
    \[\Pr[v\in I] \ge (1-\eps)\frac{\log d}{8\,d\,\log(2r)}, \qquad \text{for each } v \in V(G).\]
\end{theorem}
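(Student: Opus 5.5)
We construct a randomized sequential procedure along a degeneracy ordering $\pi = (v_1,\dots,v_n)$. Each $v_i$ has at most $d$ predecessors $N^-(v_i)$, and it decides its membership in $I$ only after all of its predecessors have decided. We mirror the plan sketched in the overview (\S\ref{subsection: overview}), replacing Martinsson--Steiner's nonlinear activation by a linear one whose expectation we can compute exactly.

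\textbf{The process.} Fix $k \approx d^{1-\delta}$ colors, or more precisely a parameter $\lambda$ tuned at the end. Each vertex $v$ carries a weight vector, i.e.\ a probability distribution $p_v$ on $[k]$ initialized to uniform. When $v_i$ is processed:
\begin{itemize}
    \item it samples a color $c_i \sim p_{v_i}$;
    \item it joins $I$ with probability given by a linear function $g$ of the current weight, provided no predecessor with the same color is in $I$;
    \item the weights of its out-neighbors are updated multiplicatively, killing color $c_i$ and renormalizing.
\end{itemize}
The update is designed so that each coordinate of $p_u$ is a martingale. This gives $\mathbb{E}[p_{u}(c)] = 1/k$ throughout, so by linearity $\Pr[v\in I] = \mathbb{E}[g(\cdot)]$ is computable directly. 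Choosing a uniformly random color class then gives the fractional coloring.

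\textbf{Key step: an entropy lower bound.} Following Johansson, we show that at the time $v$ is processed, the entropy $H(p_v)$ is typically at least roughly $\log k - O(d\log(2r)/k)$. Each step changes the entropy of $p_v$ in a controlled way, and the expected entropy loss caused by a predecessor $u$ is proportional to the probability that $u$ enters $I$.

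This is where local $r$-colorability enters. Within $N^-(v)$, the vertices that enter $I$ and kill colors of $p_v$ form independent sets in $G[N(v)]$ only color-by-color. We would bound the total "killing pressure" on $p_v$ using the fact that $G[N(v)]$ splits into $r$ independent sets $S_1,\dots,S_r$. Within each $S_j$, the predecessors' decisions behave like the triangle-free case: they are conditionally independent given the history, since there are no edges among them. Across the $r$ classes we lose at most a factor accounting for $\log(2r)$, via the subadditivity $H(X_1,\dots,X_r)\le\sum_j H(X_j)$ together with the bound $H(\text{which class})\le\log r$.

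Combining, we get that $\mathbb{E}[\text{number of surviving colors of }v]$ is large. Hence, by the linear activation, $\Pr[v\in I]\ge(1-\eps)\frac{\log d}{8d\log(2r)}$ after optimizing $k$ as $d^{1-o(1)}/\log(2r)$-scale. The constant $8$ arises from $4$ (the triangle-free constant) times a factor $2$ lost from the $\log(2r)$ entropy term.

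\textbf{Main obstacle.} The hardest step is the entropy-loss estimate for neighborhoods containing edges. For $r=1$, neighbors of $v$ are pairwise nonadjacent, so their inclusion events decouple conditionally and the martingale entropy computation is clean. For $r>1$, correlations among adjacent predecessors in $N(v)$ could concentrate killed colors. I would first try to handle this by conditioning on a fixed $r$-coloring of $G[N(v)]$ and running the $r=1$ computation on each class separately, bounding the cross-class interaction by the $\log r$ entropy of the class label.

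Algorithmic efficiency follows because each step is an explicit $\mathrm{poly}(d)$ update.
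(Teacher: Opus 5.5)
Your proposal has a genuine gap, and it sits exactly at the step you flag as the main obstacle. For $r\ge 2$ the trouble is in the \emph{process}, not in its analysis, so no decomposition of the entropy over an $r$-coloring of $G[N(v)]$ can repair it.

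In your update, and in any variant (single-weight, or Johansson-style color by color) where a vertex $u$ that does not enter $I$ boosts all of its right-neighbors at once, two adjacent vertices $w,v$ with a common earlier neighbor $u$ are boosted together. The multipliers then satisfy $\mathbb{E}[M_wM_v]=1/(1-p(u))>1$. So the energy $p(w)p(v)$ is not a supermartingale, and it can grow by $e^{\Theta(\alpha d)}=d^{\Omega(1)}$ over $\Theta(d)$ such $u$.

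This really destroys the bound. Take $K_{1,m,m}$ with $m=d/2$ and parts $\{v\}$, $U$, $W$, ordered $U$, then $W$, then $v$. It is $d$-degenerate and locally $2$-colorable, since each neighborhood induces $K_{m,m}$ or a star. Let $\alpha=c\log d/d$ with $c\le 1$. If no vertex of $U$ enters $I$, then $v$ and every $w\in W$ carry the same inflated weight $\beta=\alpha(1-\alpha)^{-m}\ge c\,d^{c/2-1}\log d$. The $m$ vertices of $W$ are then selected independently with probability $\beta$. Since $v\in I$ forces $(U\cup W)\cap I=\emptyset$, we get $\Pr[v\in I]\le(1-\beta)^m\le\exp(-\tfrac c2 d^{c/2}\log d)$. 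In a color version, the $w$'s likewise concentrate their choices on exactly $v$'s surviving palette.

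Note also that the harmful triangles are $uwv$ with $u$ preceding both, i.e.\ the \emph{cross-class} edges of $G[N(v)]$. The within-class part, which you treat ``like the triangle-free case'', is the trivial part. The bound $H(X_1,\dots,X_r)\le\sum_j H(X_j)$ together with $H(\text{class})\le\log r$ does not control any quantity that actually appears in the argument.

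The missing idea is to build local colorability into the update rule itself. This is already spelled out in the overview you cite, for the locally $r$-colorable case:
\begin{itemize}
\item Each $v_i$ is \emph{activated} with probability $p_{i-1}(v_i)$ and then \emph{selected} with probability $1/2$.
\item If selected, all of $N_R(v_i)$ is zeroed.
\item If activated but not selected, a uniformly random color class $J_i$ of a fixed proper $r$-coloring of $G[N_R(v_i)]$ has its weights multiplied by $2r$, and the rest of $N_R(v_i)$ is zeroed. Equalizing coin flips handle the cap $1$.
\end{itemize}
This rule gives three things:
\begin{itemize}
\item Given activation, a right-neighbor is boosted by $2r$ with probability $\frac12\cdot\frac1r$, so each weight is a martingale.
\item Adjacent right-neighbors of $v_i$ never lie in the same $J_i$, so $\mathbf{1}\{v_j\notin B_i\}p_i(v_j)p_i(v_k)$ is a supermartingale with initial value at most $\alpha^2$.
\item A factor-$2r$ boost with probability $1/(2r)$ costs at most $\log(2r)\,p_{i-1}(v_i)p_{i-1}(v_k)$ of entropy per step. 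That is the true source of $\log(2r)$.
\end{itemize}
Summing over at most $d$ left-neighbors gives $\mathbb{E}[-p\log p]\ge\alpha\log(1/\alpha)-d\log(2r)\alpha^2$. Hence $\Pr[v\text{ bad}]\le\alpha/2$ for $\alpha\approx\log d/(2d\log(2r))$, and $\Pr[v\in I]=\frac12\bigl(\alpha-\Pr[v\text{ bad}]\bigr)\ge\alpha/4$. So the extra factor $2$ in the constant $8$ comes from the selection coin, not from the entropy term. No color distributions or final random-color-class step are needed: a single scalar weight per vertex yields the distribution on independent sets directly.
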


Next, we turn to $r$-uniform hypergraphs $H$, where each edge contains $r$ vertices and a graph is $d$-degenerate if every subgraph has a vertex of degree at most $d$; equivalently, $H$ admits an ordering where each vertex appears last in at most $d$ edges.
The Lov\'asz local lemma implies that $r$-uniform hypergraphs of maximum degree $\Delta$ satisfy $\chi(H) = O\left(\Delta^{\frac{1}{r-1}}\right)$.
This bound fails when replacing $\Delta$ with degeneracy $d$, as Wood~\cite{wood2014hypergraph} constructed $r$-uniform $d$-degenerate hypergraphs satisfying $\chi(H) = d + 1$.
For fractional colorings, however, a simple alteration argument yields:

\begin{fact}\label{fact: hypergraph}
    Let $H$ be a $d$-degenerate $r$-uniform hypergraph. Then, $\chi_f(H) \le \left(\frac{r}{r-1}\right)\left(rd\right)^{\frac{1}{r-1}}$.
\end{fact}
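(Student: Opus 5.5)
We would use the standard alteration (deletion) argument, but arrange it so that every vertex, not merely an average vertex, lies in the final independent set with probability at least $\frac{r-1}{r}(rd)^{-\frac{1}{r-1}}$. By the definition of $\chi_f$, this gives the stated bound.

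Fix a degeneracy ordering $\pi$ of $V(H)$ in which each vertex is the last vertex of at most $d$ edges. For a parameter $p\in(0,1]$, sample a random set $S$ by including each vertex independently with probability $p$. Then form $I$ as follows: a vertex $v\in S$ is kept in $I$ if and only if no edge $e$ with $v=\max_\pi e$ has $e\subseteq S$.

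First we check that $I$ is independent. Suppose $e\subseteq I$ for some edge $e$, and let $v=\max_\pi e$. Then $e\subseteq S$, so $v$ would have been discarded, a contradiction.

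Next we lower-bound $\Pr[v\in I]$. The event $v\in I$ occurs if $v\in S$ and, for each of the at most $d$ edges $e$ whose last vertex is $v$, the set $e\setminus\{v\}$ is not entirely contained in $S$. Conditioned on $v\in S$, each such edge fails with probability $p^{r-1}$. A union bound then gives
\[
\Pr[v\in I]\ \ge\ p\bigl(1-d\,p^{r-1}\bigr)\qquad\text{for every } v .
\]
Independence (for instance via Harris/FKG) is not needed; the union bound suffices.

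Finally we optimize over $p$. The function $p-dp^{r}$ is maximized when $1=rdp^{r-1}$, that is, at $p=(rd)^{-\frac{1}{r-1}}$. This choice satisfies $p\le 1$ provided $rd\ge 1$; the case $d=0$ is trivial, since then $H$ has no edges. At this value of $p$ we obtain $\Pr[v\in I]\ge p\left(1-\frac1r\right)=\frac{r-1}{r}(rd)^{-\frac{1}{r-1}}$ for every vertex $v$. Hence the distribution of $I$ witnesses
\[
\chi_f(H)\le \frac{r}{r-1}(rd)^{\frac{1}{r-1}}.
\]

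There is no real obstacle. The only point requiring care is to charge each edge to its last vertex under $\pi$, rather than deleting a vertex from every violated edge. This is what keeps the bound depending on $d$ instead of the maximum degree, and it is what makes the lower bound hold uniformly for every vertex.
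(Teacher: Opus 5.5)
Your proposal is correct and follows the paper's alteration argument essentially line for line: sample each vertex independently with probability $p=(rd)^{-1/(r-1)}$, delete the last vertex (in the degeneracy order) of every fully sampled edge, and use a union bound to get $\Pr[v\notin I\mid v\in S]\le dp^{r-1}=1/r$ for every vertex. One small correction: when $d=0$ the stated bound reads $\chi_f(H)\le 0$, which is false, so that case has to be excluded rather than called trivial.
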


\begin{proof}
    Let $(v_1, \ldots, v_n)$ be a degeneracy ordering of $V(H)$.
    Form $A \subseteq V(H)$ by including each vertex independently with probability $p \coloneqq 1/(rd)^{\frac{1}{r-1}}$.
    Let $I \subseteq A$ be the independent set obtained by removing from $A$ the last vertex of every edge $e \subseteq A$.
    For any $v \in V(H)$,
    \[\Pr[v \in I] = \Pr[v \in A,\, v \in I] = p(1 - \Pr[v\notin I\mid v \in A]).\]
    As $\Pr[v\notin I\mid v \in A] \le dp^{r-1}$, the claim follows.
\end{proof}

Extensive literature focuses on improving the local lemma bound on $\chi(H)$ under structural constraints~\cite{frieze2013coloring, cooper2015list, li2022chromatic, dhawan2025list, iliopoulos2021improved}.
Here, we initiate an analogous line of research for $\chi_f(H)$ in the setting of Fact~\ref{fact: hypergraph}.
We focus on hypergraphs of girth at least $4$.
Ajtai, Koml\'os, Pintz, Spencer, and Szemer\'edi~\cite{ajtai1982extremal} first established that uncrowded (girth $\ge 5$) hypergraphs contain independent sets of size at least $c_r\,n \left(\frac{\log \Delta}{\Delta}\right)^{\frac{1}{r-1}}$.
Frieze and Mubayi~\cite{frieze1990independence} adapted Johansson's entropy method to show that $r$-uniform hypergraphs with maximum degree $\Delta$ and girth at least $4$ satisfy $\chi(H) \le c_r'\left(\frac{\Delta}{\log \Delta}\right)^{\frac{1}{r-1}}$.
We prove that this bound holds for fractional chromatic number with maximum degree replaced by degeneracy:

\begin{theorem}\label{theo: hypergraph free ordinary coloring}
    For all $\eps > 0$ and integer $r \ge 2$, there exists $d_0 \in \mathbb{N}$ such that the following holds for $d \ge d_0$ and $n \in \mathbb{N}$.
    Let $H$ be an $n$-vertex $r$-uniform $d$-degenerate hypergraph with girth at least $4$. Then,
    \[\chi_f(H) \le \left(1+\eps\right)\left(\frac{r}{r-1}\right)\left(r(r-1)^2\frac{d}{\log d}\right)^{\frac{1}{r-1}}.\]
    Moreover, there is a $\mathrm{poly}(n, d)$-time algorithm that samples an independent set $I \subseteq V(H)$ such that
    \[\Pr[v\in I] \ge \left(1-\eps\right)\left(1 - \frac{1}{r}\right)\left(\frac{\log d}{r(r-1)^2d}\right)^{\frac{1}{r-1}}, \qquad \text{for each } v \in V(H).\]
\end{theorem}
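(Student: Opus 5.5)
We prove Theorem~\ref{theo: hypergraph free ordinary coloring} with a sequential sampling procedure along a degeneracy ordering $(v_1,\dots,v_n)$ of $V(H)$, so that each $v_i$ is the last vertex of at most $d$ edges. As in the graph case, the sampling probability is a linear function of a weight, and the analysis is entropy-based.

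\textbf{The procedure.} Fix a target probability $p\approx (1-1/r)\left(\frac{\log d}{r(r-1)^2 d}\right)^{1/(r-1)}$. We process $v_1,\dots,v_n$ in order and build $I$ one vertex at a time. When $v_i$ is processed, let $B_i$ be the set of \emph{backward edges} of $v_i$ (edges in which $v_i$ is last). Call $e\in B_i$ \emph{dangerous} if all $r-1$ earlier vertices of $e$ lie in $I$. Then $v_i$ may enter $I$ only if no backward edge is dangerous, and in that case it enters with probability $q_i=\min\{1,\,p\cdot w_i\}$. Here $w_i$ is a reweighting factor chosen so that the unconditional probability satisfies $\Pr[v_i\in I]=p$ exactly, whenever this is achievable. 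The natural choice is $w_i=1/\Pr[\text{no dangerous edge}]$, computed conditionally on the past in a martingale fashion, as in the Martinsson--Steiner framework. The key requirement is that the inclusion rule be linear in the accumulated weight, so that expectations can be computed directly.

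\textbf{Key step 1: bound the danger count.} The main quantity is $X_i$, the number of dangerous backward edges of $v_i$. Girth at least $4$ implies that the $(r-1)$-sets $e\setminus\{v_i\}$, $e\in B_i$, pairwise share no vertex, except through $v_i$ itself: they meet pairwise in at most one vertex, and no three of them form a triangle-like configuration. Consequently $\mathbb{E}[X_i]\le d\,p^{r-1}\cdot(\text{correction})$. For the entropy argument we need more, namely that the events ``$e$ is dangerous'' are nearly independent, so that $\Pr[X_i=0]\gtrsim e^{-\mathbb{E}X_i}$ in a suitable averaged sense.

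\textbf{Key step 2: entropy.} Here we follow Johansson and Frieze--Mubayi. Condition on the state $\mathcal{F}$ of the process just before the vertices of $\bigcup B_i$ are decided, or more precisely consider the random configuration restricted to the link of $v_i$. Its entropy must be large, because each vertex of the link was included with probability about $p$. On the other hand, if $\Pr[X_i=0]$ were small, the configuration would be forced into a low-entropy region. Concretely, we will show that
\[\mathbb{E}\bigl[-\log \Pr[X_i=0\mid \mathcal{F}]\bigr]\le \mathbb{E}\Bigl[\sum_{e\in B_i}\Pr[e\text{ dangerous}\mid\mathcal{F}]\Bigr]+o(\log d).\]
We then bound the sum using linearity: it equals $|B_i|\,p^{r-1}$ up to weight corrections.

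\textbf{Choosing $p$.} We pick $p$ so that $d p^{r-1}\approx \frac{\log d}{r(r-1)^2}$. With this choice the typical weight stays $d^{o(1)}$, and the linear map $g(w)=pw$ stays below $1$ except on a set of negligible probability. The constants $(r-1)^2$ and the factor $1-1/r$ come from two places: optimizing $p(1-dp^{r-1}\cdot\ldots)$, and the entropy loss incurred when $r-1$ link vertices must all be present. The conclusion is $\Pr[v\in I]\ge(1-\eps)p$ for every $v$, which gives the stated bound on $\chi_f(H)$. The procedure clearly runs in polynomial time.

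\textbf{Main obstacle.} The hardest step is justifying the near-independence and entropy inequality in Key step 2 under the degeneracy order. Dependencies only travel backward, but the link vertices of $v_i$ were themselves reweighted by their own backward edges. I would first try the approach used for graphs: the entropy of $\mathbf{1}_{I}$ restricted to the link is at least $\sum_u H(\Pr[u\in I\mid\cdot])$ minus a term controlled by girth $\ge4$, and the weight martingale keeps the average weight at $1$. Jensen's inequality applied to $\log$ then converts this into a lower bound on $\mathbb{E}[\Pr[X_i=0\mid\mathcal{F}]]$.
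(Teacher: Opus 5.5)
There is a genuine gap: the two ingredients that carry the proof are never supplied.

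\textbf{The procedure is not defined.} Read literally, $w_i=1/\Pr[\text{no dangerous edge}]$ is one of two things. Either it is a global unconditional probability, which is not computable in polynomial time and gives you no martingale to analyse. Or it is conditioned on the whole past, in which case it is a $0/1$ quantity. The phrase ``computed conditionally on the past in a martingale fashion'' only acquires meaning once you say how a weight attached to $v_k$ changes each time a vertex of one of its backward edges is decided. For $r\ge3$ that rule is exactly the new idea, because one earlier vertex entering $I$ does not kill $v_k$.

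The paper keeps a weight $p_i(v_k)$ and selects $v_k$ with probability $p_{k-1}(v_k)$. Let $v_i$ be an internal vertex of a backward edge $e$ of $v_k$, and suppose all earlier vertices $f\subseteq e$ are already in $I$. Set $X_{i,k}=\prod_{u\in e\setminus(f\cup\{v_k\})}p_{i-1}(u)$ and $X'_{i,k}=X_{i,k}/p_{i-1}(v_i)$. When $v_i$ is processed, the weight is multiplied by $1/(1-X_{i,k})$ if $v_i\notin I$, and by $(1-X'_{i,k})/(1-X_{i,k})$ if $v_i\in I$. This telescoping running computation of the survival probability of $e$ makes $p_i(v_k)$ an exact martingale. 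At the last internal vertex $X'_{i,k}=1$, which enforces independence of $I$.

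Two caps with equalizing coin flips are also needed. The first is an upper cap $\alpha^{\gamma}$; a cap at $1$ is not enough, since the increment bound $\log\frac{1}{1-X}\le(1+\gamma)X$ needs $X$ small. The second is a lower cap $\alpha^{1+\gamma}$, needed because weights can now also decrease; it is what makes $p\log(p/\alpha^{1+\gamma})\ge 0$ in the final step.

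\textbf{Key step 2 is asserted rather than proved, and in the form you state it is not available.} There is no well-defined state ``just before the vertices of $\bigcup B_i$ are decided.'' These vertices are decided at interleaved times and remain correlated through paths outside the link. Girth $\ge4$ only forbids an edge meeting two of the pairwise disjoint sets $e\setminus\{v_i\}$; $4$-cycles are allowed. So there is no near-independence to appeal to. Even your Key step 1 needs a bound on the joint inclusion probability of the $r-1$ vertices of an edge, which you do not have.

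The paper's entropy is not that of the link configuration but the scalar potential $H_i(v_k)=-p_i(v_k)\log p_i(v_k)$. Each step satisfies $\mathbb{E}[H_i(v_k)\mid\text{past}]\ge H_{i-1}(v_k)-(1+\gamma)Q_{i-1}(v_i,v_k)$. Here the energy $Q_i(v_j,v_k)=\mathbf{1}\{f\subseteq I,\ \text{uncapped}\}\prod_{u\in e\setminus f}p_i(u)$ is a supermartingale with $\mathbb{E}Q\le\alpha^r$. This is where girth $\ge4$ is used: a vertex outside $e$ is a left-neighbour of at most one vertex of $e$. Summing over the at most $(r-1)d$ internal left-neighbours gives $\mathbb{E}H_{k-1}(v_k)\ge\alpha\log(1/\alpha)-(1+\gamma)(r-1)d\alpha^r$. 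Together with $\mathbb{E}p_{k-1}(v_k)=\alpha$ and the lower cap, this yields $\alpha^{\gamma}\Pr[v_k\text{ capped}]\le\alpha(1/r+\gamma)$.

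That inequality is exactly where $r(r-1)^2$ and $1-1/r$ come from. What matters is this lost mass, not the probability of exceeding the cap. Your constants are read off the statement rather than derived.

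As a sanity check, suppose Key step 2 held as written and $\mathbb{E}X_i\le(1+o(1))dp^{r-1}$. With the unconditional choice of $w_i$ you would only need $\Pr[X_i=0]\ge p$. Jensen would then give $p\approx\bigl(\log d/((r-1)d)\bigr)^{1/(r-1)}$, which is the conjectured sharp bound. For $r=2$ this is constant $1$ for triangle-free graphs, which is only conjectured. So that step is concealing the whole difficulty.
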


Note that setting $r = 2$ in Theorem~\ref{theo: hypergraph free ordinary coloring} recovers the bound in Theorem~\ref{theo: martinsson steiner}, thereby yielding a substantial generalization of it.
Our proof of Theorem~\ref{theo: hypergraph free ordinary coloring} is inspired by the approach of Frieze and Mubayi~\cite{frieze1990independence}, who combine entropy techniques with the R\"odl nibble method.
However, unlike the standard nibble framework that processes vertices in batches, our algorithm processes vertices individually along the degeneracy ordering.
This operational distinction necessitates several key modifications that form the core technical novelty of our approach (we discuss this further in~\S\ref{subsection: overview}).

In~\cite{martinsson2025random}, Martinsson and Steiner demonstrated that a random sampling technique of Alon, Krivelevich, and Sudakov~\cite{AKSConjecture} extends Theorem~\ref{theo: martinsson steiner} to locally sparse graphs.
In a similar vein, applying random sampling in the style of Duke, Lefmann, and R\"odl~\cite{duke1995uncrowded} allows us to extend Theorem~\ref{theo: hypergraph free ordinary coloring} to linear hypergraphs (i.e., hypergraphs of girth at least $3$); see also the discussion in~\cite[p.~2]{verstraete2026independent} and an informal sketch of the reduction in~\cite[\S3.2]{triangle_free}.\footnote{This reduction also appears in~\cite{frieze2013coloring} in the context of bounding the chromatic number of linear hypergraphs. There, however, the reduction requires a significantly more delicate analysis, resulting in a larger constant-factor loss.}

\begin{corollary}\label{corl: linear}
    For all $\eps > 0$ and $r \ge 3$, there exists $d_0 \in \mathbb{N}$ such that the following holds for $d \ge d_0$ and $n \in \mathbb{N}$.
    Let $H$ be an $n$-vertex $r$-uniform $d$-degenerate linear hypergraph.
    Then, \[\chi_f(H) \le \left(1+\eps\right)\left(\frac{r}{r-1}\right)\left(\frac{r(r-1)^2(3r - 4)}{(r - 2)}\frac{d}{\log d}\right)^{\frac{1}{r-1}}.\]
\end{corollary}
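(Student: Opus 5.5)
We derive Corollary~\ref{corl: linear} from Theorem~\ref{theo: hypergraph free ordinary coloring} by random sampling in the style of Duke, Lefmann, and R\"odl. The plan is to sample a random subset, delete the few edges that form short cycles, apply the theorem to the resulting girth-$4$ hypergraph, and then average.

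\textbf{Sampling and cleaning.} Fix a degeneracy ordering $(v_1,\dots,v_n)$ of $H$, and let $p\in(0,1]$ be a parameter. Include each vertex in a random set $S$ independently with probability $p$. In a linear hypergraph the only obstruction to girth at least $4$ is a $3$-cycle, i.e.\ edges $e_1,e_2,e_3$ that pairwise meet in distinct single vertices. Form $H'$ on vertex set $S$ by keeping the edges of $H[S]$ and then deleting one edge of every $3$-cycle of $H[S]$. We delete the edge whose last vertex is latest in the ordering, or use any rule that charges the deletion to the last vertex. Then $H'$ is $r$-uniform, linear, and has girth at least $4$. With respect to the induced ordering, each $v\in S$ is last in at most $X_v$ edges of $H'$, where $X_v$ counts the edges $e\ni v$, with $v$ last in $e$, such that $e\setminus\{v\}\subseteq S$. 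Hence $\mathbb{E}[X_v\mid v\in S]\le dp^{r-1}$.

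\textbf{Degeneracy and averaging.} The difficulty is that degeneracy is a global parameter, while $X_v$ is only controlled in expectation. I would handle this by deleting from $S$ every vertex $v$ with $X_v > (1+\eps)dp^{r-1}$, together with every vertex lying in too many $3$-cycles. Since degeneracy is hereditary, the remaining hypergraph $H''$ is $d'$-degenerate with $d'=(1+\eps)dp^{r-1}$. Choose $p=d^{-\beta}$ with $\beta<1/(r-1)$, so that $d'\to\infty$ and $\log d'=(1-(r-1)\beta)\log d$. The rare bad events then have probability $o(1)$ by Chernoff and Janson-type bounds in the hypergraph setting. Apply Theorem~\ref{theo: hypergraph free ordinary coloring} to $H''$ and combine the independent set it produces with the randomness of $S$. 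This gives
\[
\Pr[v\in I]\ \ge\ p(1-o(1))\Bigl(1-\tfrac1r\Bigr)\Bigl(\frac{\log d'}{r(r-1)^2 d'}\Bigr)^{\frac1{r-1}} = (1-o(1))\Bigl(1-\tfrac1r\Bigr)\Bigl(\frac{(1-(r-1)\beta)\log d}{r(r-1)^2 d}\Bigr)^{\frac1{r-1}},
\]
since the factor $p$ cancels against $p^{r-1}$ inside the root.

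\textbf{Main obstacle.} The hardest step is the $3$-cycle count. For the deletions to cost only a $(1-\eps)$ factor, the expected number of deleted edges charged to $v$ must be $o(dp^{r-1})$. The cleanest route is the following: each vertex $v$ that lies in $3$-cycles inside $S$ in which the deleted edge is charged to $v$ is simply removed from $S$, and we require $\Pr[v \text{ removed}\mid v\in S]=o(1)$.

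A $3$-cycle through $v$ in which $v$ is a latest vertex needs the other vertices to lie in $S$, which happens with probability about $p^{3r-4}$ after accounting for the shared vertices. The number of such configurations, using linearity and degeneracy, is at most about $d^{3}$ times polynomial factors in $r$. However, the other two edges are not necessarily ones where $v$ is last. The degeneracy count must therefore be carried out carefully, ordering the three edges by their last vertices, to obtain roughly $r^{O(1)}d^{3}$ triangles charged to $v$.

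This requires $d^3p^{3r-4}=o(1)$, i.e.\ $\beta>3/(3r-4)$. Taking $\beta$ just above $3/(3r-4)$ gives
\[
1-(r-1)\beta\approx 1-\frac{3(r-1)}{3r-4}=\frac{-1}{3r-4}<0,
\]
which is the wrong sign. So the count must instead be $d^2$-type, giving $1-(r-1)\cdot\frac{2}{3r-4}=\frac{r-2}{3r-4}$. This matches the constant $\frac{3r-4}{r-2}$ in Corollary~\ref{corl: linear} and requires $r\ge3$.

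I therefore expect the key estimate to be that each vertex is the designated (latest) vertex of at most $O_r(d^2)$ triangles, which can be derived from degeneracy. Establishing this $d^2$ bound is the crux of the argument. Once it holds, the result follows by choosing $\beta=\frac{2}{3r-4}+\eps'$ and taking the reciprocal of the displayed probability, as in the definition of $\chi_f$.
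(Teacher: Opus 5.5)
There is a genuine gap: the lemma you call the crux is false. Your overall route is the Duke--Lefmann--R\"odl sampling reduction that the paper invokes (the paper gives no details). The bookkeeping is also right: $p=d^{-\beta}$ with $\beta$ slightly above $\frac{2}{3r-4}$ produces the factor $\frac{r-2}{3r-4}$.

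The problem is that degeneracy bounds only the number of edges in which a vertex is \emph{last}. For $r\ge 3$, a $3$-cycle can have all three last vertices away from its intersection points. Then no vertex of the cycle is last in two of its edges, and the number of such cycles at a vertex is not bounded by any function of $d$. Your intermediate ``$d^3$'' count is unjustified for the same reason.

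Here is an example with $r=3$. Let $V_0=[m]$ and partition $E(K_m)$ into matchings of size at most $d$. For each matching $M$, add a new vertex $w_M$ and the edges $\{a,b,w_M\}$ for $ab\in M$. Listing $V_0$ first and then the $w_M$'s shows this hypergraph is linear and $d$-degenerate. Yet every triple $a,b,c\in V_0$ spans the $3$-cycle $\{a,b,w_{ab}\},\{b,c,w_{bc}\},\{a,c,w_{ac}\}$, where $w_{ab}$ denotes $w_M$ for the $M\ni ab$.

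Suppose the last vertex $w^*=w_{M^*}$ has $|M^*|=d$. Then $w^*$ is the latest vertex of $d(m-2)$ such cycles. Given $w^*\in S$, with probability $1-o(1)$ some $ab\in M^*$ has $a,b\in S$, since $dp^2\to\infty$. For $m\gg p^{-3}$, that pair is then closed inside $S$ by some $c$ with high probability. So your rule deletes $w^*$ with probability $1-o(1)$, not $o(1)$. Since $n$ is arbitrary, no first-moment charging of \emph{all} $3$-cycles to single vertices can work.

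Separately, your first construction of $H'$ by deleting \emph{edges} is invalid, because an independent set of $H'$ may contain a deleted edge of $H$. Only vertex deletions are allowed, which is what you switch to later.

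The repair is to not kill all $3$-cycles and to not use Theorem~\ref{theo: hypergraph free ordinary coloring} as a black box. Beyond linearity, the proof of Proposition~\ref{prop: hypergraph} uses girth only in Claim~\ref{claim: hi hypergraph}: a vertex $v_i\notin e$ lies in $N_L(u)$ for at most one $u\in e$. The configurations violating this are \emph{ordered} $3$-cycles $e,e',e''$ with $e'\cap e''=\{v_i\}$, $e'\cap e=\{u\}$, $e''\cap e=\{u'\}$ and $u\neq u'$, where $u$ is last in $e'$ and $u'$ is last in $e''$.

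Charge each such configuration to the last vertex $v$ of $e$. There are at most $d$ choices for $e$, $r(r-1)$ for $(u,u')$, $d$ for $e''$ (since $u'$ is last in it), and $r-1$ for $v_i$. Then $e'$ is forced by linearity. This gives at most $r(r-1)^2d^2$ configurations, each needing $3r-4$ further vertices in $S$.

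So delete $v$ whenever such a configuration lies in $S$, or when $X_v>(1+\eps)dp^{r-1}$. The latter is a Chernoff bound, because by linearity the sets $e\setminus\{v\}$ are disjoint. The total cost is $O(r^3d^2p^{3r-4})+o(1)=o(1)$ per vertex. Now run Algorithm~\ref{algorithm: fcp hypergraph} on $H[S']$ with the induced ordering, and check that the proof of Proposition~\ref{prop: hypergraph} goes through under this weaker hypothesis. This yields exactly the stated constant.

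In the example above, no $3$-cycle is ordered: only the $w_M$'s are last in any edge, and each edge contains exactly one of them. That is why those cycles are harmless to the algorithm, even though your charging cannot handle them.
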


We remark that Theorem~\ref{theo: hypergraph free ordinary coloring} and Corollary~\ref{corl: linear} immediately yield lower bounds on the independence number for the hypergraph classes under consideration in terms of degeneracy.
Recently, Verstra\"ete and Wilson posed the following conjecture regarding the independence number of linear hypergraphs:

\begin{conjecture}[{\cite[Conjecture~1]{verstraete2026independent}}]\label{conj: linear vw}
    For each $r \ge 3$, there exists a constant $a_r > 0$ satisfying $a_r = 1 - o_r(1)$ as $r \to \infty$ such that every $n$-vertex $r$-uniform linear hypergraph $H$ with maximum degree $\Delta$ satisfies
    \[\alpha(H) \ge (a_r - o_\Delta(1))\,n\,\left(\frac{\log \Delta}{\Delta}\right)^{\frac{1}{r-1}}.\]
\end{conjecture}

While this bound was previously known under the stronger assumption that $H$ has girth $\omega(\Delta)$~\cite{nie2021randomized}, the best-known general bound achieves only
\[a_r = \frac{1}{r-1}\left(\frac{r-2}{4r - 5}\right)^{\frac{1}{r-1}},\]
which gives $a_r = o_r(1)$ as $r \to \infty$.
This earlier constant is obtained by combining Iliopoulos's bound on the chromatic number of uncrowded hypergraphs~\cite{iliopoulos2021improved} with the random sampling framework of Duke, Lefmann, and R\"odl~\cite{duke1995uncrowded} (cf.~\cite[p.~2]{verstraete2026independent}).

As $d(G) \leq \Delta(G)$, directly applying Corollary~\ref{corl: linear} confirms Conjecture~\ref{conj: linear vw} for all linear hypergraphs with $a_r = \left(1 - \frac{1}{r}\right)\left(\frac{r - 2}{r(r-1)^2(3r - 4)}\right)^{\frac{1}{r-1}}$.
Furthermore, in concurrent work with Methuku and Vo~\cite{triangle_free}, we establish Conjecture~\ref{conj: linear vw} with a sharper constant, namely $a_r = \left(\frac{r - 2}{(r-1)(4r-5)}\right)^{\frac{1}{r-1}}$.
By leveraging techniques developed by Janzer, Methuku, and the author~\cite{Kttt}, that work also improves Iliopoulos's bound on the independence number of uncrowded hypergraphs~\cite{iliopoulos2021improved}, thereby advancing the classical result of Ajtai, Koml\'os, Pintz, Spencer, and Szemer\'edi~\cite{ajtai1982extremal}.

\subsection{Fractional coloring with local demands}\label{sec: main results}

Kelly and Postle~\cite{kelly2024fractional} recently introduced the theory of \textit{fractional coloring with local demands} wherein each vertex $v$ ``demands'' a certain probability of being included in the random independent set $I$.
This demand depends on the local neighborhood structure surrounding $v$. 
To extend this framework, we generalize \cite[Definition~1.2]{kelly2024fractional} to hypergraphs as follows:

\begin{definition}[$\phi$-colorings]
    Given a hypergraph $H$, a \textit{demand function} for $H$ is a function $\phi\,:\, V(H) \to [0, 1]$.
    A $\phi$-coloring of $H$ is a probability distribution $\mathcal{D}$ over the independent sets of $H$ such that $\Pr_{I \sim \mathcal{D}}[v\in I] \geq \phi(v)$ for each vertex $v \in V(H)$.
\end{definition}

This definition enables sampling larger independent sets in graphs with non-uniform degree sequences. For example, a vertex of minimum degree $\delta(G)$ is a better candidate for $I$ than one of degree $\Delta(G)$, as lower-degree vertices rule out fewer remaining candidates. Kelly and Postle established several fundamental results in their paper, including a local fractional greedy bound that generalizes the Caro–Wei theorem~\cite{caro1979new}, a local fractional version of Vizing's theorem for multigraphs~\cite{Vizing}, and progress toward a local fractional version of Shearer's celebrated bound on the independence number of $K_3$-free graphs~\cite{shearer1991note}. Recently, Martinsson and Steiner resolved a conjecture of Kelly and Postle by proving this local fractional version of Shearer's bound:

\begin{theorem}[\cite{martinsson2025random}]\label{theo: martinsson steiner local demands}
    Let $G$ be a $K_3$-free graph and let $\phi$ be a demand function for $G$.
    Then, $G$ admits a $\phi$-coloring if
    \[\phi(v) \leq (1-o(1))\frac{\log \deg(v)}{\deg(v)}, \qquad \text{for each } v \in V(G).\]
\end{theorem}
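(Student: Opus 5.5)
The plan is to build the $\phi$-coloring with a sequential sampling procedure that follows a fixed ordering, in the spirit of the entropy framework described in \S\ref{subsection: overview}. Since $\phi$ depends on $\deg(v)$ and not on degeneracy, we order the vertices by nonincreasing degree, so each $v$ comes after only neighbours of degree at least $\deg(v)$. We expect the argument to work for any ordering, because the demand of $v$ is governed by its own degree.

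Each vertex $v$ carries a weight, initialised to $w_0(v) \approx \frac{\log \deg(v)}{\deg(v)}$ (up to $1-o(1)$ factors). When $v_i$ is processed, it enters $I$ with probability linear in its current weight, $g(w_{i-1}(v_i))$, provided no earlier neighbour has been selected. If $v_i$ is selected, the weights of its later neighbours are set to $0$. If $v_i$ is rejected, the later neighbours $u$ are multiplied by $1/(1-w_{i-1}(v_i))$, which keeps $w_t(u)$ a martingale. Because $g$ is linear, we get $\Pr[v\in I] = \mathbb{E}[g(w_{\text{final}}(v))]$. Everything therefore reduces to showing that $w(v)$ does not drop to $0$ too often, that is, that $v$ is rarely blocked.

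The key step is an entropy/counting bound in Johansson style. Condition on the history revealed before $v$ is processed. Triangle-freeness makes the earlier neighbours of $v$ pairwise nonadjacent, so their fates are governed only by their own earlier neighbourhoods. We will show that whenever $v$ is blocked with probability close to $1$, many neighbours must have had large weight, and compare this with the martingale identity $\mathbb{E}[w(u)] = w_0(u)$. Concretely, we bound the logarithm of the probability that no neighbour is selected by $-\sum_u w(u)$. We then use the fact that the ``entropy'' $-\log(w(v)/w_0(v))$ grows at most like the total weight of the neighbours. The expected total neighbour weight is $\sum_{u\sim v} w_0(u)$, which is at most about $\deg(v)\cdot\frac{\log\deg(v)}{\deg(v)} = \log\deg(v)$, using that neighbours earlier in the ordering have degree at least $\deg(v)$ and that $x\mapsto \log x/x$ is decreasing. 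Applying Jensen/Markov with the multiplicative drift then gives $\mathbb{E}[w_{\text{final}}(v)] \ge (1-o(1))\,w_0(v)$ on a high-probability event. One also has to cap weights at $1/2$, say, and check that the cap is reached only with negligible probability.

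We expect the main obstacle to be exactly this control. We must keep the final weight of $v$ from being killed by a single large-weight neighbour, and we must show that the cost of the updates made on rejection is only $1+o(1)$ rather than a constant factor. We would try a truncation argument, combined with the inequality $\mathbb{E}[\log w] \ge \log w_0 - \sum_u \mathbb{E}[w(u)]\,(1+O(w(u)))$, to obtain the sharp constant $1-o(1)$. We also need to deal with small-degree vertices, where the $o(1)$ term is not small. For those we would use the local greedy bound of Kelly and Postle, which gives demand $1/(\deg(v)+1)$, and combine the two distributions by mixing.
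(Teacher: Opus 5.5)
\textbf{There is a genuine gap: the step meant to upgrade the constant $\tfrac14$ to $1-o(1)$ does not work.}

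The paper gives no proof of this statement. It is quoted from Martinsson and Steiner, whose argument is a Shearer-type induction. Your plan is exactly the paper's entropy framework: order by increasing demand (i.e.\ nonincreasing degree), select $v_i$ with probability equal to its weight, multiply later neighbours by $1/(1-w)$ on rejection, cap, and control the cap through $-w\log w$. That framework proves this statement only with constant $\tfrac14$ in place of $1-o(1)$. Theorem~\ref{theo: hypergraph local demands} with $r=2$ gives $\phi(v)\le(1-\varepsilon)\frac{\log\deg(v)}{4\deg(v)}$ for triangle-free graphs of large minimum degree, and Theorem~\ref{theo: k3} is the degenerate analogue. The step that should close this factor of $4$ (``Jensen/Markov with the multiplicative drift'') fails, for two reasons.
\begin{itemize}
\item \emph{The loss is not where you locate it.} With linear activation and a martingale update, and no cap, $\Pr[v\in I]=\mathbb{E}[w_{\mathrm{final}}(v)]=w_0(v)$ however often $v$ is blocked. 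The whole loss is $\hat p\,\Pr[v\text{ hits the cap}]$, so what must be controlled is the \emph{upper} tail of $w(v)$. Your inequality for $\mathbb{E}[\log w]$ is also ill-posed, since $w_{\mathrm{final}}(v)=0$ with positive probability.
\item \emph{Triangle-freeness gives only first-moment information.} It makes each energy $w(u)w(v)$, $u\in N_L(v)$, a martingale. With $w_0(v)=c\log\deg(v)/\deg(v)$, this yields only the following: under the measure weighted by $w_{\mathrm{final}}(v)/w_0(v)$, the mean of $\log(w_{\mathrm{final}}(v)/w_0(v))\approx\sum_{u\in N_L(v)}w(u)$ is at most $(1+o(1))c\log\deg(v)$. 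Hitting your cap at $1/2$ requires this logarithm to be $(1-o(1))\log\deg(v)$. Markov therefore bounds the capped mass only by about $c\,w_0(v)$, giving $\Pr[v\in I]\ge c(1-c)\log\deg(v)/\deg(v)\le\frac14\log\deg(v)/\deg(v)$. Jensen says nothing about an upper tail.
\end{itemize}

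To reach $1-o(1)$ you would need $\sum_{u\in N_L(v)}w(u)$ to be \emph{concentrated} below $(1-\varepsilon')\log\deg(v)$ under that size-biased measure, and nothing in your sketch supplies this. First-moment information cannot exclude that, under this measure, $w_{\mathrm{final}}(v)$ sits at the cap with probability about $c$ and near $w_0(v)$ otherwise. The individual $w(u)$ may be heavy-tailed, reaching the cap with probability up to $w_0(u)/\hat p$. They may also be correlated through common earlier neighbours, since $4$-cycles are allowed.

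This is a real obstruction, not a bookkeeping issue. Your sketch uses only $d_L(v)\le\deg(v)$, $w_0(u)\le w_0(v)$ for $u\in N_L(v)$, and triangle-freeness, all of which hold in the $d$-degenerate setting. If it worked, it would give $\chi_f(G)\le(1+o(1))d/\log d$ for $d$-degenerate triangle-free graphs. The paper records that as a conjecture of Martinsson and Steiner, confirmed by this framework (in follow-up work) only for girth at least $5$.

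Two smaller issues:
\begin{itemize}
\item The ordering is not arbitrary. Placing $v$ after many lower-degree neighbours makes $\sum_{u\in N_L(v)}w_0(u)\gg\log\deg(v)$, and the weight overflows.
\item A fixed mixture with the Kelly--Postle distribution multiplies every vertex's guarantee by its mixing weight. It cannot give $1-o(1)$ to high-degree vertices while giving low-degree vertices a fixed positive fraction. You would need degree-dependent parameters inside a single process instead.
\end{itemize}
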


Theorem~\ref{theo: martinsson steiner local demands} is proved by an intricate modification of Shearer-style induction.
However, extending this argument to other settings appears difficult, as Shearer-style induction is notoriously rigid with respect to the $K_3$-freeness assumption.
In contrast, our framework provides a flexible approach for tackling fractional colorings of graphs and hypergraphs.
Indeed, a simple adaptation of our arguments for Theorems~\ref{theo: r colorable ordinary coloring} and~\ref{theo: hypergraph free ordinary coloring} yields the corresponding local bounds---specifically by processing vertices in increasing order of demand rather than in a degeneracy ordering.

\begin{theorem}\label{theo: local demands r colorable}
    For all $\eps > 0$ and $r \in \N$ there exists $\delta_0 \in \N$ such that the following holds for $\delta \geq \delta_0$ and $n \in \N$.
    Let $G$ be an $n$-vertex locally $r$-colorable graph such that $\delta(G) \geq \delta$ and let $\phi$ be a demand function for $G$.
    Then, $G$ admits a $\phi$-coloring if
    \[\phi(v) \leq (1-\eps)\dfrac{\log \deg(v)}{8\,\deg(v)\,\log(2r)}, \qquad \text{for each } v \in V(G).\]
    Moreover, there is a $\poly(n)$-time algorithm that samples an independent set $I \subseteq V(G)$ such that
    \[\Pr[v\in I] \geq \phi(v), \qquad \text{for each } v \in V(G).\]
\end{theorem}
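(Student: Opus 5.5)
We will run the same sequential entropy-based sampling procedure that underlies Theorem~\ref{theo: r colorable ordinary coloring}. The one change is the order in which vertices are processed: instead of a degeneracy ordering, we use an ordering $\pi = (v_1,\ldots,v_n)$ in which $\phi(v_i)$ is nonincreasing. Equivalently, up to the monotonicity of $x\mapsto \log x/x$, $\deg(v)$ is nondecreasing; ties are broken arbitrarily.

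Each vertex $v$ receives an initial weight tied to its own demand, $w_0(v) \approx \phi(v)/(1-\eps/2)$. When $v_i$ is reached, it is included with probability given by the linear activation $g(w_{i-1}(v_i))$. The weights of its unexplored neighbours are then updated:
\begin{itemize}
\item they are zeroed if $v_i$ is selected;
\item they are boosted if $v_i$ is omitted, so that each weight sequence is a martingale.
\end{itemize}
Linearity then gives $\Pr[v_i\in I] = \mathbb{E}[g(w_{i-1}(v_i))]$ exactly. The goal is to show this is at least $\phi(v_i)$.

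The key structural point is what the ordering buys. When $v_i$ is processed, its earlier neighbours $u\in N(v_i)$ with $u<_\pi v_i$ all satisfy $\phi(u)\ge \phi(v_i)$. The number of such neighbours is at most $\deg(v_i)$, so the quantity $\deg(v_i)$ plays exactly the role that $d$ played in the degenerate setting. We will verify that the proof of Theorem~\ref{theo: r colorable ordinary coloring} only uses two features of the degeneracy ordering:
\begin{enumerate}
\item the count of back-neighbours, which here is bounded by $\deg(v_i)$;
\item an upper bound on the selection probabilities of those back-neighbours, which enters the entropy estimate.
\end{enumerate}
In the locally $r$-colorable analysis, the entropy step bounds the loss at $v_i$ via the induced structure on $N(v_i)\cap\{v_1,\ldots,v_{i-1}\}$. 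Since $G[N(v_i)]$ is $r$-colorable, the joint outcome on these back-neighbours carries at most about $\log(2r)$ "bits" per effective selection. This yields $\mathbb{E}[g(w_{i-1}(v_i))] \ge (1-\eps)\,\log\deg(v_i)/(8\deg(v_i)\log(2r))$, provided the back-neighbours' weights are of order at most $\log\deg(v_i)/\deg(v_i)$.

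This last proviso is where the ordering matters. Back-neighbours $u$ have demand $\phi(u)\ge\phi(v_i)$, so their weights could be larger than $v_i$'s. We handle this by capping: each $u$ is used with effective weight $\min\{w(u), \text{the level appropriate for } \deg(v_i)\}$ when analysing $v_i$. Alternatively, we note that the entropy inequality only needs the total expected selection mass among back-neighbours, which is bounded by $\sum_u \Pr[u\in I]$. If needed, we may instead process in increasing order of degree so that back-neighbours have smaller degree. In that case we argue that the boosting factors accumulated at $v_i$ are controlled by $\deg(v_i)$ alone.

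The condition $\delta(G)\ge\delta_0$ ensures that $\deg(v)$ is large enough for every vertex, so the $o(1)$ error terms from Theorem~\ref{theo: r colorable ordinary coloring} (which required $d\ge d_0$) are at most $\eps$ uniformly. Running time is polynomial, since the procedure makes one pass with explicit updates. Finally, if some vertex receives probability larger than $\phi(v)$, we delete it from $I$ with the appropriate probability. This only decreases inclusion probabilities and preserves independence, so we get a $\phi$-coloring.

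The main obstacle is the heterogeneity just described: in the degenerate case all vertices share the common scale $\log d/d$, whereas here back-neighbours have their own scales. I would first try to show directly that the martingale boosts applied to $v_i$ by a back-neighbour $u$ depend on $u$'s selection probability, which is at most a $1/\deg(u)$-type quantity. If that fails, I would fall back to the truncation trick, lowering every earlier vertex's contribution to the level determined by $\deg(v_i)$ when analysing $v_i$.
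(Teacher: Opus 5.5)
\textbf{Your ordering goes the wrong way, and this is fatal.} You process vertices in nonincreasing order of $\phi$, so every back-neighbour $u$ of $v$ has $\phi(u)\ge\phi(v)$ and hence a larger weight.

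The paper processes in \emph{nondecreasing} order of demand. It does so precisely so that every back-neighbour $u$ of $v$ has $\alpha_u\le\alpha_v$. This gives $\mathbb{E}[Q_i(u,v)]\le\alpha_u\alpha_v\le\alpha_v^2$ for the energy. Hence the entropy loss at $v$ is at most $\log(2r)\,d_L(v)\,\alpha_v^2\le\log(2r)\deg(v)\,\alpha_v^2$, which is small compared with $\alpha_v\log(1/\alpha_v)$.

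In your order the loss is of size $\log(2r)\,\alpha_v\sum_u\alpha_u$ over back-neighbours $u$, and nothing bounds it. Truncating the back-neighbours' weights ``in the analysis'' is not available: the algorithm really runs them at full weight, and it must, to meet their own demands.

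Here is a concrete failure. Take $G=K_{\delta,D}$ with $D=\delta^2$, which is locally $r$-colorable for every $r$. Let $c=(1-\eps)/(8\log(2r))$, and let $\phi$ equal the bound in the statement. The $D$ vertices $b$ of degree $\delta$ have the larger demand $\phi(b)=c\log\delta/\delta$, so you process them first. They have no earlier neighbours, so each is selected, independently, with probability at least $\phi(b)$. A vertex $a$ of degree $D$ can end up in $I$ only if no $b$ was selected. That has probability at most $\exp(-D\phi(b))=\delta^{-c\delta}$, far below $\phi(a)=2c\log\delta/\delta^2$.

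Your fallbacks do not help. Increasing order of degree is the same ordering here. The back-neighbour mass $\sum_b\phi(b)=c\,\delta\log\delta$ is nowhere near $O(\log\deg(a))$. Also, ordering by $\phi$ is not equivalent to ordering by degree, since $\phi$ is only bounded above.

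Reversing the order removes the difficulty entirely. The number of back-neighbours is still at most $\deg(v)$. The paper works with $d^*(v)=\max\{d_L(v),\delta\}\le\deg(v)$ together with the monotonicity of $\log x/x$, and no capping of neighbours' weights is needed.

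Second, the procedure you describe (zero all forward neighbours if $v_i$ is selected, boost all of them otherwise) is the triangle-free one. It breaks when neighbourhoods contain edges. For adjacent forward neighbours $v_j,v_k$ of $v_i$ it gives $\mathbb{E}[p_i(v_j)p_i(v_k)\mid p_{i-1}(\cdot)]=p_{i-1}(v_j)p_{i-1}(v_k)/(1-p_{i-1}(v_i))$. So the energy is not a supermartingale and can accumulate over many common back-neighbours.

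The paper instead does the following:
\begin{enumerate}
\item It activates $v_i$ with probability $p_{i-1}(v_i)$ and selects it with probability $1/2$.
\item If $v_i$ is activated but not selected, it picks a uniformly random colour class $J_i$ of a fixed proper $r$-coloring of $G[N_R(v_i)]$.
\item It multiplies the weights on $J_i$ by $2r$ and zeroes the rest of $N_R(v_i)$, with a cap at $1$ and equalizing coin flips.
\end{enumerate}
Since adjacent vertices never both lie in $J_i$, the energy is a supermartingale. The expected entropy at $v_k$ drops by at most $\log(2r)\,p_{i-1}(v_i)p_{i-1}(v_k)$ per back-neighbour $v_i$. This, not a bit-count over $N(v_k)\cap\{v_1,\ldots,v_{i-1}\}$, is where local $r$-colorability and the factor $\log(2r)$ enter.

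Your sketch also omits the step that turns entropy into the result. Nonzero weights never drop below $\alpha_v$, so $\mathbb{E}[p\log(p/\alpha_v)]\ge\log(1/\alpha_v)\Pr[v\text{ is capped}]$. Combined with the entropy lower bound, this gives $\Pr[v\text{ is capped}]\le\alpha_v/2$, and hence $\Pr[v\in I]\ge\alpha_v/4=\phi(v)$.
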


\begin{theorem}\label{theo: hypergraph local demands}
    For all $\eps > 0$ and integer $r \geq 2$ there exists $\delta_0 \in \N$ such that the following holds for $\delta \geq \delta_0$ and $n \in \N$.
    Let $H$ be an $n$-vertex $r$-uniform hypergraph having girth at least $4$ such that $\delta(G) \geq \delta$ and let $\phi$ be a demand function for $H$.
    Then, $H$ admits a $\phi$-coloring if
    \[\phi(v) \leq \left(1-\eps\right)\left(1 - \frac{1}{r}\right)\left(\dfrac{\log \deg(v)}{r(r-1)^2\deg(v)}\right)^{\frac{1}{r-1}}, \qquad \text{for each } v \in V(H).\]
    Moreover, there is a $\poly(n)$-time algorithm that samples an independent set $I \subseteq V(H)$ such that
    \[\Pr[v\in I] \geq \phi(v), \qquad \text{for each } v \in V(H).\]
\end{theorem}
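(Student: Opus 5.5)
We run the same sequential sampling procedure used for Theorem~\ref{theo: hypergraph free ordinary coloring}, with one change: vertices are processed in increasing order of demand rather than along a degeneracy ordering. Each vertex $v$ also gets its own parameters. Set $d_v \coloneqq \deg(v)$ and give $v$ the initial weight
\[p_v \approx \left(\frac{\log d_v}{r(r-1)^2 d_v}\right)^{\frac{1}{r-1}},\]
which matches the target $\phi(v)$ up to the factor $(1-\eps)(1-1/r)$.

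List the vertices $v_1,\dots,v_n$ so that $\phi(v_1)\le\cdots\le\phi(v_n)$; after harmlessly lowering each $\phi(v)$ to exactly the bound in the statement, this is a non-increasing order of $\deg$, up to the monotonicity of $\log x/x$ for large $x$. The key point is that for the analysis of $v_i$ we only need the number of edges $e\ni v_i$ in which $v_i$ is the last vertex. This is at most $\deg(v_i)$, and that quantity plays the role of the degeneracy $d$. Moreover, every other vertex of such an edge was processed earlier and so has demand at most $\phi(v_i)$, hence weight at most $p_{v_i}$ (roughly).

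We then rerun the entropy argument of Theorem~\ref{theo: hypergraph free ordinary coloring} for the fixed vertex $v_i$:
\begin{enumerate}
\item Keep the linear activation $g(w_{i-1}(v_i))$ and the martingale property $\mathbb{E}[w_{i-1}(v_i)] = w_0(v_i)$.
\item Bound the probability that $v_i$ is selected by the same entropy bound. The quantity to control is the expected number of edges through $v_i$ that become ``dangerous'', that is, all of whose other $r-1$ vertices are already selected.
\item Girth at least $4$ gives the same conditional independence as before. Each such edge is dangerous with probability at most $\prod_{u\in e\setminus\{v_i\}} p_u \le p_{v_i}^{r-1}$, using the ordering.
\end{enumerate}
Summing over at most $\deg(v_i)$ edges reproduces the computation of the uniform case with $d$ replaced by $\deg(v_i)$. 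This yields
\[\Pr[v_i\in I]\ge (1-\eps)(1-1/r)\,p_{v_i} \ge \phi(v_i).\]
Finally, sample from this distribution, which takes $\mathrm{poly}(n)$ time as before, to get the algorithmic statement.

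The main obstacle is that the uniform-case analysis has a single global parameter $d$. That parameter enters both the weight-update rule (how much the weights of unexplored neighbors grow when a vertex is omitted) and the entropy estimate. Now the update for a vertex $u$ must use its own $p_u$ and $\deg(u)$, while the error terms at $v_i$ involve neighbors with different parameters. I would first check that every inequality in the uniform proof is monotone in the earlier vertices' weights: smaller $p_u$ for the earlier vertices can only help $v_i$. The other point to verify is that $\delta\ge\delta_0$ makes all the $o(1)$ terms in the uniform analysis uniformly small, since each of them depends only on $\deg(v_i)\ge\delta$. If the monotonicity fails for the weight-inflation step, the fallback is to cap each neighbor's effective parameter at that of $v_i$ during the analysis of $v_i$.
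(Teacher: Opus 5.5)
Your proposal is correct and follows the paper's own route. The paper proves Proposition~\ref{prop: hypergraph} by running Algorithm~\ref{algorithm: fcp hypergraph} in increasing order of demand with per-vertex weights $\alpha_v=(1+\eps/10)\frac{r}{r-1}\phi(v)$, uses that ordering to bound each edge's initial energy by $\prod_{u\in e}\alpha_u\le\alpha_{v_k}^r$, and replaces $d$ by $d^*(v)=\max\{d_L(v)/(r-1),\delta\}\le\deg(v)$, just as you describe. Two small corrections: you mean \emph{raising} $\phi$ to the bound, which the paper avoids by taking $\alpha_v$ proportional to $\phi(v)$; and your anticipated obstacle does not arise, because the update rule depends only on current weights and the per-vertex thresholds $\alpha_{v_k}^{\gamma},\alpha_{v_k}^{1+\gamma}$, not on a global $d$.
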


We remark that Theorem~\ref{theo: local demands r colorable} implies that 
\begin{align}\label{eq: local independence r colorable}
    \alpha(G) \geq \frac{(1-\eps)}{8\log(2r)}\sum_{v\in V(G)}\dfrac{\log \deg(v)}{\deg(v)}
\end{align}
for locally $r$-colorable graphs $G$.
Prior to our work, such a bound was known to hold only under stronger minimum degree assumptions (e.g., $\delta(G) \ge \mathsf{polylog}(\Delta(G))$~\cite{DKPS, bonamy2022bounding}). 
In contrast, our result requires only $\delta(G) = \Omega(1)$, which is substantially milder.

Similarly, Theorem~\ref{theo: hypergraph local demands} implies that 
\begin{align}\label{eq: local independence hypergraph}
    \alpha(H) \geq \left(1-\eps\right)\left(1 - \frac{1}{r}\right)\frac{1}{(r(r-1)^2)^{\frac{1}{r-1}}}\sum_{v\in V(H)}\left(\dfrac{\log \deg(v)}{\deg(v)}\right)^{\frac{1}{r-1}}
\end{align}
for hypergraphs $H$ having girth at least $4$.
Such a bound was unknown prior to our work for any regime of $\delta(H)$.

We conclude this section with a short proof of a local fractional version of Fact~\ref{fact: hypergraph}, which generalizes the analogous independence number bound for $r$-uniform hypergraphs due to Caro and Tuza~\cite{caro1991improved}, albeit with a weaker dependence on $r$ (see also~\cite{spencer1972turan, dutta2012new}).

\begin{theorem}\label{thm: local fractional caro tuza}
    The following holds for integer $r \ge 2$.
    Let $H$ be an $r$-uniform hypergraph with no isolated vertices, and let $\phi$ be a demand function for $H$.
    Then, $H$ admits a $\phi$-coloring if
    \[\phi(v) \leq \left(1 - \frac{1}{r}\right) \frac{1}{(r \deg(v))^{\frac{1}{r-1}}}, \qquad \text{for each } v \in V(H).\]
    In particular, the independence number of $H$ satisfies
    \[\alpha(H) \ge \left(1 - \frac{1}{r}\right) \sum_{v \in V(H)} \frac{1}{(r \deg(v))^{\frac{1}{r-1}}}.\]
\end{theorem}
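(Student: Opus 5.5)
We adapt the alteration argument of Fact~\ref{fact: hypergraph}, replacing the uniform sampling probability by vertex-dependent probabilities and the degeneracy ordering by an ordering by degree. Set
\[p_v \coloneqq \frac{1}{(r\deg(v))^{\frac{1}{r-1}}} \le 1 \qquad \text{for each } v \in V(H);\]
this is well defined because $H$ has no isolated vertices. Order $V(H)$ as $(v_1,\ldots,v_n)$ so that $\deg(v_1) \le \deg(v_2) \le \cdots \le \deg(v_n)$, breaking ties arbitrarily. Form $A \subseteq V(H)$ by including each $v$ independently with probability $p_v$. Then let $I \subseteq A$ be obtained by deleting from $A$ the last vertex, under this ordering, of every edge $e \subseteq A$. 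Every edge contained in $A$ loses a vertex, so $I$ is independent.

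Fix $v$. As in the proof of Fact~\ref{fact: hypergraph},
\[\Pr[v\in I] = p_v\bigl(1-\Pr[v\notin I \mid v\in A]\bigr).\]
Given $v \in A$, the vertex $v$ is deleted only if some edge $e \ni v$ in which $v$ is last has $e\setminus\{v\}\subseteq A$. There are at most $\deg(v)$ such edges, and every other vertex $u$ of such an edge precedes $v$, so $\deg(u)\le\deg(v)$ and hence $p_u \ge p_v$. This is the wrong direction for the union bound, so the key step is to reverse the ordering: process vertices in decreasing order of degree, so that ``$v$ last in $e$'' forces $\deg(u) \ge \deg(v)$ and $p_u \le p_v$ for all $u \in e\setminus\{v\}$. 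This matches the paper's remark that vertices are processed in increasing order of demand, since $\phi$ decreases with degree. With this ordering, by independence and a union bound,
\[\Pr[v\notin I\mid v\in A] \le \deg(v)\,p_v^{\,r-1} = \deg(v)\cdot\frac{1}{r\deg(v)} = \frac1r,\]
so
\[\Pr[v\in I]\ge \Bigl(1-\frac1r\Bigr)(r\deg(v))^{-\frac{1}{r-1}} \ge \phi(v),\]
and the distribution of $I$ is a $\phi$-coloring.

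For the independence number, take $\phi(v)$ equal to the bound in the statement. Then
\[\mathbb{E}|I| = \sum_v \Pr[v\in I] \ge \Bigl(1-\frac1r\Bigr)\sum_{v\in V(H)} (r\deg(v))^{-\frac{1}{r-1}},\]
so some outcome of $I$ is an independent set at least this large. The only delicate point is choosing the orientation of the ordering so that each competitor $u$ of $v$ has $p_u\le p_v$. Everything else is the computation already carried out in Fact~\ref{fact: hypergraph}.
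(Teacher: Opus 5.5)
Your argument is correct and is essentially the paper's proof. It is the same alteration argument as in Fact~\ref{fact: hypergraph}, with the ordering chosen so that every other vertex of an edge in which $v$ is last has sampling probability at most that of $v$, followed by the same union bound over at most $\deg(v)$ edges. The paper samples $v$ with probability $\frac{r}{r-1}\phi(v)$ and orders by increasing demand, whereas you sample at the maximal rate $(r\deg(v))^{-\frac{1}{r-1}}$ and order by decreasing degree, so one distribution serves every admissible $\phi$; you should state the decreasing-degree ordering from the outset rather than after the false start with increasing degree.
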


\begin{proof}
    Let $(v_1, \ldots, v_n)$ be an ordering of $V(H)$ in increasing order of demands.
    Form $A \subseteq V(H)$ by including each vertex $v_i$ independently with probability $p_i \coloneqq \frac{r}{r-1}\phi(v_i) \leq 1$.
    Let $I \subseteq A$ be the independent set obtained by removing from $A$ the last vertex (under our ordering) of every edge $e \subseteq A$.
    For a vertex $v_i \in V(H)$, we have
    \begin{align}\label{eq: local fractional deg}
        \Pr[v_i \in I] = \Pr[v_i \in A,\, v_i \in I] = p_i(1 - \Pr[v_i\notin I\mid v_i \in A]).
    \end{align}
    Note that
    \begin{align*}
        \Pr[v_i \notin I \mid v_i \in A] \leq \sum_{\substack{e \ni v_i \\ v_i = \max_{w \in e}w}}\Pr[e \subseteq A \mid v_i \in A] 
        &= \sum_{\substack{e \ni v_i \\ v_i = \max_{w \in e}w}}\prod_{u \in e \setminus \{v_i\}}\left(\frac{r}{r-1}\phi(u)\right) \\
        &\leq \sum_{\substack{e \ni v_i \\ v_i = \max_{w \in e}w}}\prod_{u \in e \setminus \{v_i\}}\left(\frac{r}{r-1}\phi(v_i)\right) \\
        &\leq \sum_{\substack{e \ni v_i \\ v_i = \max_{w \in e}w}}\frac{1}{r \deg(v)} \leq \frac{1}{r},
    \end{align*}
    where we use the fact that $\phi(u) \leq \phi(v_i)$ as a consequence of our vertex ordering.
    Plugging the above into \eqref{eq: local fractional deg} completes the proof.
\end{proof}

\subsection{Notation and terminology}\label{subsection: notation}

Throughout the rest of the paper we use the following basic notation. For $n \in \N$, we let $[n] \defeq \set{1, \ldots, n}$.
For a hypergraph $H$, its vertex and edge sets are denoted $V(H)$ and $E(H)$, respectively.
For a vertex $v \in V(H)$, $\deg_H(v) \coloneqq |\set{e \in E(H)\,:\, v \in e}|$ denotes the degree of $v$; we drop the subscript $H$ when the context is clear.
We say $u \neq v$ is a neighbor of $v$ if $\set{u , v} \subseteq e$ for some edge $e\in E(H)$.
For a subset $U \subseteq V(H)$, the subhypergraph induced by $U$ is denoted by $H[U]$.

We say an $n$-vertex hypergraph $H$ is \emphd{$d$-degenerate} for $d\in \N$ if there exists an ordering $(v_1, \ldots, v_n)$ of $V(H)$ such that for each $i\in [n]$, we have $\deg_{H_i}(v_i) \leq d$, where $H_i \coloneqq H[\set{v_1, \ldots, v_i}]$.
Given an ordering of $V(H)$, we let $N_L(v_i)$ be the set of neighbors of $v_i$ in the hypergraph $H_i$; we call such vertices \emph{left-neighbors} of $v_i$.
Similarly, we let $N_R(v_i)$ be the vertices $v_k$ such that $v_i \in N_L(v_k)$, i.e., the \emph{right-neighbors} of $v_i$.
We let $d_L(v_i) = |N_L(v_i)|$ and $d_R(v_i) = |N_R(v_i)|$ denote the left- and right-degrees of $v_i$ respectively.
Finally, given an edge $e$ and a vertex $u \in e$, we say that $u$ is an \emph{internal vertex} of $e$ if $u\in N_L(v)$ for some $v \in e$, i.e., $u$ is not the right-most vertex in $e$ under the given degeneracy ordering.

\subsection{Proof Overview}\label{subsection: overview}

In this section, we provide an informal overview of our proof techniques.
We first discuss the overarching algorithmic template for $d$-degenerate hypergraphs and then provide details on the adaptation to each of our main results.

The algorithm takes as input a $d$-degenerate (hyper)graph $H$ and outputs an independent set $I \subseteq V(H)$.
We initially assign each vertex $v$ a weight $p_0(v) = \alpha$ for a well-chosen $\alpha \in [0, 1]$.
Iterating through the vertices in a degeneracy ordering of $H$, we include the vertex $v_i$ in $I$ with a probability roughly $\Theta(p_{i-1}(v_i))$ (we are being intentionally vague at this point as there is a distinction in how we do so for each of our results).
We update the weights $p_i(v_k)$ for each $k > i$ according to the outcome of the event $\set{v_i\in I}$.
In particular, the weight $p_i(v_k)$ must be set to $0$ if some edge $e\ni v_k$ satisfies $(e\setminus \set{v_k}) \subseteq I$.
Additionally, we increase the weight for $v_k \in N_R(v_i)$ if $v_i\notin I$ as we would like to make it more likely that $v_k \in I$ in this case.
Our choice of parameters ensures that $\E[p_i(v_k)\mid p_{i-1}(\cdot)] = p_{i-1}(v_k)$.
It then follows that $\Pr[v_i \in I] = \Theta(\alpha)$.

There is, however, a technical fiddle: what if $p_i(v)$ becomes too large (e.g., $p_i(v) > 1$)?
As the weights represent a probability, we cannot allow this to occur.
For a well-chosen $\hat p$, we artificially set $p_i(v) = \hat p$ for vertices $v$ satisfying $p_i(v) > \hat p$ and label them as \emph{bad}.
Bad vertices are never added to $I$.
As a result, we no longer have the nice property $\E[p_i(v_k)\mid p_{i-1}(\cdot)] = p_{i-1}(v_k)$.
A standard trick, often referred to as an \emph{equalizing coin flip}, allows us to achieve this property in the case that $p_i(v) > \hat p$.
See~\cite[Ch.~13]{MolloyReed} for a more in-depth discussion of the necessity of such thresholding in the context of coloring $K_3$-free graphs.

At this point, we can show the following:
\[\Pr[v_k \in I] = \Theta\left(\alpha - \hat p \Pr[v_k \text{ is bad}]\right).\]
To show $\Pr[v_k \in I]$ is ``large'', we must show that $\Pr[v_k \text{ bad}]$ is ``small.''
This is where our entropy argument appears.
In general, given a probability distribution $\mathcal{D}$ over some finite set $T$, the entropy of $\mathcal{D}$ is the quantity
\[H(\mathcal{D}) = -\sum_{t\in T}\Pr_{X \sim\mathcal{D}}[X = t]\log \Pr_{X \sim\mathcal{D}}[X = t].\]
Roughly speaking, $H(\mathcal{D})$ measures how widely the assignment probabilities vary---the more they vary, the smaller their entropy will be; the above is maximized when $\mathcal{D}$ is the uniform distribution.
We define the \textit{entropy} at $v$ after processing $i$ vertices to be
\[H_i(v) = -p_i(v)\log p_i(v).\]
Using the intuition from probability theory, if $\mathbb{E}[H_{k-1}(v_k)]$ is bounded away from zero, then the quantity $p_{k-1}(v_k)$ does not exhibit large fluctuations.
Since $\mathbb{E}[p_i(v)] = \alpha$, it follows that $p_{k-1}(v)$ is unlikely to reach a value as large as $\hat{p}$.
We establish that $\mathbb{E}[H_{k-1}(v_k)]$ is indeed sufficiently large, thereby confirming that $\Pr[v_k \text{ is bad}]$ is suitably small, as required.

In the remainder of this overview, we provide more details on our sampling and weight-update procedures for each of our main results.

\subsubsection*{Triangle-free graphs}

Here, we select $v_i$ in $I$ with probability $p_{i-1}(v_i)$.
We update the weights $p_i(v_k)$ for each $k > i$ according to the outcome of the event $\set{v_i \in I}$.
More formally, let $v_iv_k \in E(G)$ be such that $k > i$.
We update $p_i(v_k)$ as follows:
\begin{align}\label{eq: update intro K3}
    p_i(v_k) = \left\{\begin{array}{cc}
    0 & \text{if } v \in I; \\[1em]
    \dfrac{p_{i-1}(v_k)}{1 - p_{i-1}(v_i)} & \text{if } v_i \notin I.
\end{array}\right.
\end{align}
The above rule clearly ensures that $\E[p_i(v_k) \mid p_{i - 1}(\cdot)] = p_{i-1}(v_k)$.
For our proof of Theorem~\ref{theo: martinsson steiner}, we use the threshold $\hat p = \alpha^{o(1)}$ and define equalizing coin flips accordingly.
See Algorithm~\ref{algorithm: fcp k3} for a formal description of the procedure.

\subsubsection*{Locally $r$-colorable graphs $G$}

We select a vertex $v_i$ to be in $I$ via a two-step process: first, we \textit{activate} $v_i$ with probability $p_{i-1}(v_i)$; then, conditional on activation, we \textit{select} $v_i$ to enter $I$ with probability $1/2$.
We subsequently update the weights $p_i(v_k)$ for all $v_k \in V(G)$ based on the outcome of the event that $v_i \in I$.
Specifically, if $v_i \in I$, we set $p_i(v_k) = 0$ for all $v_k \in N_R(v_i)$, thereby ensuring that $I$ remains an independent set.
If $v_i$ is not activated, the weights remain unchanged.
However, if $v_i$ is activated but not selected, we sample a color class $J \subseteq N_R(v_i)$ uniformly at random from a proper $r$-coloring of $G[N_R(v_i)]$.
We then increase the weights of all vertices $v_k \in J$ by a factor of $2r$ (boosting their likelihood of future selection into $I$) and set the weights of all remaining vertices $v_j \in N_R(v_i) \setminus J$ to $0$.

To briefly motivate this update rule, one can view the activation of $v_i$ as identifying it as a strong candidate for inclusion in $I$.
If $v_i$ is activated but ultimately rejected, we seek to include a large subset of $N_R(v_i)$ in $I$ to act as a ``certificate'' for the omission of $v_i$ from $I$.
Because this certificate must form an independent set, we increase the weights of vertices contained within a large independent subset of $N_R(v_i)$.\footnote{We remark that this argument holds, with minor modifications, even if the graph is only locally fractionally $r$-colorable rather than properly locally $r$-colorable.}

For the proof of Theorem~\ref{theo: r colorable ordinary coloring}, we use the threshold $\hat p = 1$ and define equalizing coin flips accordingly.
See Algorithm~\ref{algorithm: fcp} for a formal description of the procedure.

\subsubsection*{High-girth hypergraphs $H$}

Here, we select $v_i$ into $I$ with probability $p_{i-1}(v_i)$.
We then update the weights $p_i(v_k)$ for each $k > i$ based on the outcome of the event $\{v_i \in I\}$.
More formally, let $e$ be an edge containing $\{v_i, v_k\}$ such that $v_k$ is the right-most vertex in $e$ under the ordering.
(Note that $e$ is unique since $H$ is linear.
Furthermore, edges in which $v_k$ appears as the right-most vertex are the only ones capable of ``killing'' $v_k$, i.e., forcing $p_{k-1}(v_k) = 0$.)
Additionally, let $f \subseteq e$ consist of those vertices in $e$ that precede $v_i$ in the degeneracy ordering (see Figure~\ref{fig: edge} for an illustration).
We update $p_i(v_k)$ according to the following rules:
\begin{align}\label{eq: update intro}
    p_i(v_k) = \left\{\begin{array}{cc}
    p_{i-1}(v_k) & \text{if } u \notin I \text{ for some } u \in f;  \\[2em]
    \dfrac{p_{i-1}(v_k)}{1 - \prod_{u \in e \setminus (f \cup \{v_k\})}p_{i-1}(u)} & \text{if } v_i \notin I; \\[2em]
    p_{i-1}(v_k)\left(\dfrac{1 - \prod_{u \in e \setminus (f \cup \{v_i, v_k\})}p_{i-1}(u)}{1 - \prod_{u \in e \setminus (f \cup \{v_k\})}p_{i-1}(u)}\right) & \text{otherwise}.
\end{array}\right.
\end{align}
Note that the last case implies that if $u \in I$ for every $u \in e \setminus \{v_k\}$, then $p_{k-1}(v_k) = 0$; i.e., $v_k$ is not included in $I$.

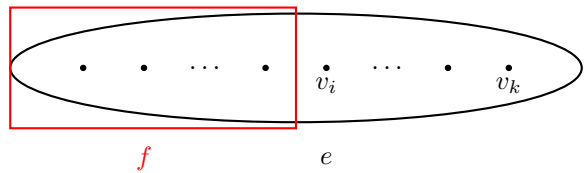
\begin{figure}[htb!]
    \centering
    \begin{tikzpicture}[scale=0.8]
        \node[circle,fill=black,draw,inner sep=0pt,minimum size=2pt] (v1) at (0, 0) {};
        \node[circle,fill=black,draw,inner sep=0pt,minimum size=2pt] (v2) at (1, 0) {};
        \node[circle,fill=black,draw,inner sep=0pt,minimum size=2pt] (v3) at (3, 0) {};
        \node[circle,fill=black,draw,inner sep=0pt,minimum size=2pt] (v4) at (4, 0) {};
        \node[circle,fill=black,draw,inner sep=0pt,minimum size=2pt] (v5) at (6, 0) {};
        \node[circle,fill=black,draw,inner sep=0pt,minimum size=2pt] (v6) at (7, 0) {};

        \node at (2, 0) {$\ldots$};
        \node at (5, 0) {$\ldots$};

        \node[anchor=north] at (v4) {$v_i$};
        \node[anchor=north] at (v6) {$v_k$};
        \node[] at (4, -1.5) {$e$};
        \node[red] at (1, -1.5) {$f$};
        
        \draw[thick] (3.5,0) ellipse (4.7cm and 0.9cm);
        \draw[thick, red] (-1.2,-1) rectangle (3.5,1);
        
    \end{tikzpicture}
    \caption{An edge $e$ containing $v_i$ and $v_k$ such that $v_k$ is the right-most vertex of $e$. Here, $f$ consists of the vertices of $e$ appearing before $v_i$ in the degeneracy ordering.}
    \label{fig: edge}
\end{figure}

Let us motivate this update rule by first recalling the approach of Frieze and Mubayi~\cite{frieze2013coloring}, who proved a bound on the chromatic number of hypergraphs with girth at least $4$.
In their work, they employ a variant of the R\"odl nibble method (see~\cite{KangKelly2023nibble} for a survey on applications to hypergraph coloring).
Crucially, the nibble method proceeds in stages: during stage~$i$, each uncolored vertex $v$ samples a subset of colors $S_i(v) \subseteq [q]$ by including each color $c$ independently with probability $\eps p_{i-1}(v, c)$. 
Weights are then updated accordingly—namely, if a color $c$ is selected by every vertex $u \in e \setminus \{v\}$ for some edge $e \ni v$, then $c$ becomes unviable for $v$, forcing $p_i(v, c) = 0$.
A vertex is successfully colored once it selects a non-empty subset.\footnote{The precise implementation differs slightly from this simplified description, but these differences are immaterial for our overview.} 
Because the underlying hypergraph is linear, the probability that a color $c$ is not removed at $v$ due to a specific edge $e \ni v$ during stage~$i$ is given by
\[q_{e, i}(v, c) \coloneqq 1 - \prod_{u \in e \setminus \{v\}} \big(\eps p_{i-1}(u, c)\big).\]
To ensure that the expected weight of $p_i(v, c)$ remains invariant, Frieze and Mubayi set the update rule to
\[p_i(v, c) = \frac{p_{i-1}(v, c)}{\prod_{e \ni v} q_{e, i}(v, c)}.\]
While natural, the analogous vertex weight update rule cannot be applied in our setting because we process vertices one at a time rather than in parallel batches. 
In the $i$-th stage of the nibble method, every remaining uncolored vertex makes random choices simultaneously. 
In contrast, during the $i$-th step of our algorithm, only a single vertex makes random choices. 
Furthermore, our algorithm processes each vertex exactly once, whereas the nibble method repeatedly visits uncolored vertices until a color is successfully assigned. These fundamental structural differences necessitate a new update procedure.

Let us now return our attention to the setting of our update rule.
I.e., we are processing a vertex $v_i$, and $v_k\in N_R(v_i)$ is such that $e \ni \set{v_i, v_k}$.
Additionally, let $f \subseteq e$ consist of the vertices of $e$ that appear before $v_i$ in the degeneracy ordering.
The probability that $v_k$ is killed due to $e$ in the algorithm is roughly $q_{e}(v_k) \coloneqq 1 - \prod_{v_j \in e\setminus \set{v_k}}p_{j-1}(v_j)$.
As we cannot determine $q_{e}(v_k)$ a-priori, our update rule enables a semblance of a ``running computation'' of $q_{e}(v_k)$.
Indeed, one can view the rule as related to the conditional probability that $e$ does not kill $v_k$ given the outcomes of vertices in $e$ processed thus far; i.e., we have
\[\Pr[e \text{ does not kill } v_k \mid f \subseteq I] \approx 1 - \prod_{u \in e \setminus (f \cup \{v_k\})}p_{i-1}(u),\]
and
\[\Pr[e \text{ does not kill } v_k \mid f\cup \{v_i\} \subseteq I] \approx 1 - \prod_{u \in e \setminus (f \cup \{v_i,v_k\})}p_{i-1}(u).\]
Additionally, we have
\[\Pr[e \text{ does not kill } v_k \mid f \not \subseteq I] = 1,\]
covering all possible cases.
It follows that the net effect of the edge $e$ on the weight of $v_k$ is roughly
\[\prod_{v_i \in e \setminus \{v_k\}}\frac{\Pr[e \text{ does not kill } v_k \mid I \cap \{v_1, \ldots, v_i\}]}{\Pr[e \text{ does not kill } v_k \mid I \cap \{v_1, \ldots, v_{i-1}\}]} \approx \frac{1}{\Pr[e \text{ does not kill } v_k]} = \frac{1}{q_e(v_k)},\]
as the product is telescoping in expectation.

For the proof of Theorem~\ref{theo: hypergraph free ordinary coloring}, we use the threshold $\hat p = \alpha^{o(1)}$.
A key step in the proof of Theorems~\ref{theo: martinsson steiner} and~\ref{theo: r colorable ordinary coloring} relies on the fact that a weight either increases, remains the same, or is set to $0$ during each iteration.
We no longer have this property as a result of the final case in the update process~\eqref{eq: update intro}.
Therefore, we need an additional threshold step that ensures a weight does not drop too much.
We ensure all non-zero weights are at least $\alpha^{1+o(1)}$, and define additional equalizing coin flips accordingly.
See Algorithm~\ref{algorithm: fcp hypergraph} for a formal description of the procedure.

\subsubsection*{Local demands}
The proofs of Theorems~\ref{theo: local demands r colorable} and~\ref{theo: hypergraph local demands} follow the same general template. However, rather than processing vertices according to a degeneracy ordering, we process them in increasing order of demand, noting that each vertex $v$ has at most $(r-1)\deg_H(v)$ vertices appearing before it in this ordering.

\subsection*{Structure of the paper}
The remainder of the paper is structured as follows: in \S\ref{section: K3}, we provide our alternate proof of Theorem~\ref{theo: martinsson steiner}; in \S\ref{section: locally colorable}, we prove Theorems~\ref{theo: r colorable ordinary coloring} and~\ref{theo: local demands r colorable}; in \S\ref{section: hypergraph}, we prove Theorems~\ref{theo: hypergraph free ordinary coloring} and~\ref{theo: hypergraph local demands}; finally, in \S\ref{section: concluding remarks}, we outline potential future avenues of work.

\section{$K_3$-free graphs: an alternate proof of Theorem~\ref{theo: martinsson steiner}}\label{section: K3}

In this section, we will prove the following result, which bounds the fractional chromatic number of $d$-degenerate $K_3$-free graphs, providing an alternate proof of Theorem~\ref{theo: martinsson steiner}.

\begin{theorem}\label{theo: k3}
    For all $\eps > 0$ there exists $d_0 \in \N$ such that the following holds for $d \geq d_0$ and $n \in \N$.
    Let $G$ be an $n$-vertex $d$-degenerate $K_3$-free graph.
    Then, \[\chi_f(G) \leq (4+\eps)\dfrac{d}{\log d}.\]
    Moreover, there is a $\poly(n, d)$-time algorithm that samples an independent set $I \subseteq V(G)$ such that
    \[\Pr[v\in I] \geq (1-\eps)\dfrac{\log d}{4\,d}, \qquad \text{for each } v \in V(G).\]
\end{theorem}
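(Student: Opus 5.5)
The plan is to analyse the sequential weighted-sampling procedure from \S\ref{subsection: overview} with update rule~\eqref{eq: update intro K3}, and to bound the mass of large weights by an entropy (or, more precisely, ``$p\log(1/p)$'') potential.

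\textbf{The algorithm.} Fix a degeneracy ordering $(v_1,\dots,v_n)$. Every vertex starts with $p_0(v)=\alpha$, where $\alpha = c\,\frac{\log d}{d}$ and the constant $c$ is chosen at the end; we expect $c=1/2$. Let $\hat p=\alpha^{\eta}$ for a small fixed $\eta>0$, so that $\hat p\to 0$. At step $i$, if $v_i$ is not bad we put $v_i$ into $I$ with probability $q:=p_{i-1}(v_i)$; bad vertices are never put into $I$. Right-neighbours $v_k$ of $v_i$ are then updated by~\eqref{eq: update intro K3}: the weight becomes $0$ if $v_i\in I$, and $p_{i-1}(v_k)/(1-q)$ otherwise. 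If a new value would exceed $\hat p$, we apply an equalizing coin flip: the weight is set to $\hat p$ and the vertex is marked bad, with probabilities chosen so that $\E[p_i(v_k)\mid \mathcal F_{i-1}]=p_{i-1}(v_k)$. If $v_i$ is bad, we simply treat $q$ consistently so that the same identity holds.

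This yields three structural facts. First, each weight is a martingale with mean $\alpha$. Second, every nonzero weight lies in $[\alpha,\hat p]$, since weights only get multiplied by factors $\ge 1$, capped, or zeroed. Third, $\Pr[v_k\in I]=\E[p_{k-1}(v_k)\mathbf 1\{v_k\text{ good}\}]$. Everything runs in $\poly(n,d)$ time.

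\textbf{Entropy decrement and $K_3$-freeness.} Let $H_i(v)=-p_i(v)\log p_i(v)$. When $v_i\in N_L(v_k)$ is processed, a direct computation gives
\[
\E[H_i(v_k)-H_{i-1}(v_k)\mid\mathcal F_{i-1}] = p\log(1-q)\ \ge\ -(1+O(\hat p))\,p\,q,
\]
where $p=p_{i-1}(v_k)$. The capping step can only increase $H$ relative to this, and I will check that carefully. The key use of $K_3$-freeness is the following. Since $v_i v_k\in E(G)$, no earlier vertex $v_j$ is adjacent to both $v_i$ and $v_k$. Hence at every step $j<i$ at most one of $p(v_i)$, $p(v_k)$ changes, each in a mean-preserving way. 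So the product $p_j(v_i)p_j(v_k)$ is a martingale up to time $i-1$, and $\E[p_{i-1}(v_i)p_{i-1}(v_k)]=\alpha^2$. Summing over the at most $d$ left-neighbours gives
\[
\E[H_{k-1}(v_k)]\ \ge\ \alpha\log(1/\alpha)-(1+o(1))\,d\alpha^2 .
\]

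\textbf{Converting entropy into a bound on large weights.} Fix $\delta<1-\eta$ close to $1$, put $\beta=\alpha^{1-\delta}$, and set $X=\E[p_{k-1}(v_k)\mathbf 1\{p_{k-1}(v_k)\ge\beta\}]$. Every nonzero weight is at least $\alpha$, so $p\log(1/p)\le p\log(1/\alpha)$ always, and $p\log(1/p)\le(1-\delta)p\log(1/\alpha)$ when $p\ge\beta$. Therefore
\[
\E[H_{k-1}(v_k)]\le(\alpha-X)\log(1/\alpha)+X(1-\delta)\log(1/\alpha),
\]
and combining with the lower bound yields $X\le(1+o(1))\,d\alpha^2/(\delta\log(1/\alpha))$. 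Bad vertices carry weight $\hat p\ge\beta$, so
\[
\Pr[v_k\in I]\ \ge\ \alpha-X\ \ge\ \frac{\log d}{d}\Bigl(c-\frac{c^2}{\delta}(1+o(1))\Bigr),
\]
using $\log(1/\alpha)=(1-o(1))\log d$. Taking $c=1/2$ and $\delta,\eta$ close to $1$ and $0$ respectively gives $(1-\eps)\frac{\log d}{4d}$. This is exactly a $\phi$-coloring with $\phi\equiv(1-\eps)\frac{\log d}{4d}$, and so gives the bound on $\chi_f(G)$.

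\textbf{Main obstacle.} The delicate step is the interaction of the thresholding with both martingale claims. I need to design the equalizing coin flips so that three things hold at once: (i) each $p(v)$ stays exactly mean-preserving even after $v$ becomes bad or after a bad $v_i$ is processed; (ii) the product martingale $p(v_i)p(v_k)$ survives, which requires that capping only ever acts on the vertex being updated; and (iii) the per-step entropy loss is still at most $(1+O(\hat p))pq$ in the capped cases. I would first verify (iii) by a convexity argument: replacing a value above $\hat p$ by a mean-preserving mixture of $\hat p$ and $0$ should only affect $\E H$ favourably up to an $O(\hat p)$-relative error. This is the place where the choice $\hat p=\alpha^{o(1)}$, small but not too small, is used.
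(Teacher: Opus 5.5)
Your proposal is correct and follows essentially the same route as the paper: the same sequential sampler with the update $p\mapsto p/(1-q)$ or $0$ and equalizing coin flips at $\hat p=\alpha^{\eta}$, the $K_3$-free product martingale giving $\E[p_{i-1}(v_i)p_{i-1}(v_k)]=\alpha^2$, and the entropy bound $\E[H_{k-1}(v_k)]\ge\alpha\log(1/\alpha)-(1+o(1))d\alpha^2$, converted using that every nonzero weight is at least $\alpha$. Your intermediate threshold $\beta$ is unnecessary, since the paper simply takes $\beta=\hat p$. The capped-case check you flag also holds exactly, with no $O(\hat p)$ error, because any mean-$p$ distribution on $\{0,\hat p\}$ gives $\E[p_i\log p_i]=p\log\hat p<p\log\bigl(p/(1-q)\bigr)$.
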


Let us describe our independent set procedure (see Algorithm~\ref{algorithm: fcp}).
The algorithm takes as input an $n$-vertex $d$-degenerate $K_3$-free graph $G$, a degeneracy ordering $(v_1, \ldots, v_n)$ of the vertices $V(G)$, and a parameter $\eps > 0$, and outputs an independent set in $G$.
We refer the reader to \S\ref{subsection: overview} for an informal overview of the procedure.
Let us now provide a formal description of the algorithm.

\vspace{10pt}
\begin{breakablealgorithm}
\caption{Random Independent Set Procedure for $K_3$-Free Graphs}\label{algorithm: fcp k3}
\begin{flushleft}
\textbf{Input}: An $n$-vertex $d$-degenerate $K_3$-free graph $G$, a degeneracy ordering $(v_1, \ldots, v_n)$ of the vertices $V(G)$, and a parameter $\eps > 0$. \\
\textbf{Output}: An independent set in $G$.
\end{flushleft}

\begin{enumerate}[itemsep=5pt]
    \item \textbf{Initialize:} Set $I = B_0 = \emptyset$.
    For each $v \in V(G)$, set $p_0(v) = \alpha$, where $\alpha = \frac{\log d}{(2 + \eps/2)d}$, and let $\hat p = \alpha^{\eps/10}$.

    \item \textbf{Iterate:} For $i = 1, \ldots, n$, do the following:
            \begin{enumerate}[label=\ep{\roman*}, itemsep=5pt]
                \item For each $j > i$ such that $v_j \notin N_R(v_i)$, set $p_i(v_j) = p_{i-1}(v_j)$.
                \item \label{step: ai} Let $a_{i} \sim \mathrm{BER}(p_{i-1}(v_i))$. If $a_{i} = 1$, then add $v$ to $I$.
                \item For each $v_j \in N_R(v_i)$, do the following:
                \begin{enumerate}[label=\ep{\roman*}, itemsep=5pt]
                    \item\label{step: ai delete k3} If $a_{i} = 1$, set $p_i(v_j) = \hat p$ with probability $\mu_{i, j} \coloneqq \max\left\{0,\, \frac{p_{i-1}(v_j)/\hat p - (1 - p_{i-1}(v_i))}{p_{i-1}(v_i)}\right\}$ and $0$ otherwise.
                    \item If $a_{i} = 0$, set $p_i(v_j) = \min\left\{\frac{p_{i-1}(v_j)}{1 - p_{i-1}(v_i)},\, \hat p\right\}$.
                \end{enumerate}
            \item\label{step: bad update k3} Set $B_i = \set{v_j\,:\, p_i(v_j) = \hat p,\, j > i}$.
        \end{enumerate}
\end{enumerate}
\end{breakablealgorithm}
\vspace{10pt}

Note that Algorithm~\ref{algorithm: fcp k3} can be implemented in $O(nd)$ time satisfying the efficiency requirements and performance guarantees outlined in Theorem~\ref{theo: k3}.
As a result of Steps~\ref{step: ai delete k3}~and~\ref{step: bad update k3}, the set $I$ is an independent set.
Additionally, the probability $\mu_{i, j}$ is well-defined: indeed, $\mu_{i, j} \leq 1$ since $p_{i-1}(v_j) \leq \hat p$. We have $\mu_{i, j} > 0$ if and only if $p_{i-1}(v_j)/(1 - p_{i-1}(v_i)) > \hat p$.
It remains to show that $\Pr[v_k \in I]$ is large for each vertex $v_k$.
To this end, note the following:
\begin{align}\label{eq: K3 main bound}
    \Pr[v_k \in I] = \E[p_{k-1}(v_k)\mathbf{1}\set{v_k \notin B_{k-1}}] = \E[p_{k-1}(v_k)] - \hat p \Pr[v_k \in B_{k-1}].
\end{align}

Let us begin by showing that the random variable $p_i(v_k)$ is a martingale in $i$, which in particular implies $\E[p_{k-1}(v_k)] = \alpha$.

\begin{lemma}\label{lemma: K3 martingale}
    For $0 \leq i < k \leq n$, we have $\E[p_i(v_k)] = \alpha$.
\end{lemma}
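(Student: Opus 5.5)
We show that for each fixed $k$ the sequence $(p_i(v_k))_{0 \le i < k}$ is a martingale with respect to the history of the algorithm. Let $\mathcal{F}_{i-1}$ denote all randomness revealed before step $i$, which determines $p_{i-1}(\cdot)$. It suffices to prove
\[\E[p_i(v_k) \mid \mathcal{F}_{i-1}] = p_{i-1}(v_k) \qquad \text{for } 1 \le i < k.\]
Taking expectations and inducting on $i$ from the base case $p_0(v_k) = \alpha$ then gives $\E[p_i(v_k)] = \alpha$.

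Fix $i < k$ and condition on $\mathcal{F}_{i-1}$. Write $p = p_{i-1}(v_i)$ and $q = p_{i-1}(v_k)$, and note that $q \le \hat p$ because the algorithm never lets a weight exceed $\hat p$.

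If $v_k \notin N_R(v_i)$, then $p_i(v_k) = q$ deterministically and there is nothing to prove.

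If $v_k \in N_R(v_i)$, we use $a_i \sim \mathrm{BER}(p)$. On $a_i = 1$, the new weight is $\hat p$ with probability $\mu_{i,k}$ and $0$ otherwise. On $a_i = 0$, it is $\min\{q/(1-p), \hat p\}$. We split according to whether $q/(1-p) \le \hat p$.

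\begin{itemize}
\item If $q/(1-p) \le \hat p$, then $\mu_{i,k} = 0$. The conditional expectation is $(1-p)\cdot q/(1-p) = q$.
\item If $q/(1-p) > \hat p$, then $\mu_{i,k} = \bigl(q/\hat p - (1-p)\bigr)/p$. The conditional expectation is
\[p\,\mu_{i,k}\hat p + (1-p)\hat p = \bigl(q - (1-p)\hat p\bigr) + (1-p)\hat p = q.\]
\end{itemize}

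One edge case needs checking: $p = 1$ would make the division undefined. This cannot occur, since $p \le \hat p = \alpha^{\eps/10} < 1$.

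The only step needing care is that $\mu_{i,k}$ is a genuine probability, so that the coin flip is well defined. Nonnegativity holds by the $\max$ in its definition. For $\mu_{i,k} \le 1$, it suffices that $q/\hat p - (1-p) \le p$, i.e.\ $q \le \hat p$, which the algorithm maintains. With this established, the two-case computation above closes the induction.
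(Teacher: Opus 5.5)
Your proof is correct and matches the paper's argument: the same one-step identity $\mathbb{E}[p_i(v_k)\mid \text{history}]=p_{i-1}(v_k)$, split into the same two cases according to whether $p_{i-1}(v_k)/(1-p_{i-1}(v_i))\le \hat p$, then induction from $p_0(v_k)=\alpha$. The only differences are cosmetic. You absorb the bad-vertex situations into the general computation (for example, $q=\hat p$ forces $\mu_{i,k}=1$) rather than calling them trivial, and you check inside the proof that $\mu_{i,k}\in[0,1]$ and $p<1$, which the paper remarks on just before the lemma.
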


\begin{proof}\stepcounter{ForClaims} \renewcommand{\theForClaims}{\ref{lemma: K3 martingale}}
    The following claim shows that $p_i(v_k)$ is a martingale in $i$ for $i = 0, \ldots, k-1$.
    \begin{claim}\label{claim: pi k3}
        $\E[p_i(v_k) \mid p_{i-1}(\cdot)] = p_{i-1}(v_k)$ for $i = 1, \ldots, k-1$.
    \end{claim}
    
    \begin{claimproof}
        If $v_i \notin N_L(v_k)$ or $\{v_i, v_k\} \cap B_{i-1} \neq \emptyset$, the claim is trivial.
        Suppose $v_i \in N_L(v_k)$.
        We have two cases to consider.
        \begin{itemize}[itemsep=5pt]
            \item \textbf{Case 1:} $\frac{p_{i-1}(v_k)}{1 - p_{i-1}(v_i)} \leq \hat p$.
            We have
            \begin{align*}
                \E[p_i(v_k) \mid p_{i-1}(\cdot)] &= (1 - p_{i-1}(v_i))\frac{p_{i-1}(v_k)}{1 - p_{i-1}(v_i)}  = p_{i-1}(v_k),
            \end{align*}
            as desired.
    
            \item \textbf{Case 2:} $\frac{p_{i-1}(v_k)}{1 - p_{i-1}(v_i)} > \hat p$.
            We have
            \begin{align*}
                \E[p_i(v_k) \mid p_{i-1}(\cdot)] &= (1 - p_{i-1}(v_i))\hat p + p_{i-1}(v_i)\cdot\hat p\cdot\left(\frac{p_{i-1}(v_k)/\hat p - (1 - p_{i-1}(v_i))}{p_{i-1}(v_i)}\right) \\
                &= p_{i-1}(v_k),
            \end{align*}
            as desired.
        \end{itemize}
        The above covers all cases and completes the proof.
    \end{claimproof}
    
    Noting that $p_0(v_k) = \alpha$, applying Claim~\ref{claim: pi k3} recursively completes the proof.
\end{proof}

To assist with the remainder of the proof, we define the following random variables for $0 \leq i < j, k \leq n$.
\begin{align*}
    Q_i(v_j, v_k) &= \mathbf{1}\{v_jv_k \in E(G)\}p_i(v_j)p_i(v_k), \\
    H_i(v_k) &= -p_i(v_k)\log p_i(v_k).
\end{align*}
We refer to $Q_i(v_j, v_k)$ as the \textit{energy} of the pair $(v_j, v_k)$, and $H_i(v_k)$ as the \textit{entropy} of $v_k$.
We begin by computing the expected values of each of the above random variables.

\begin{lemma}\label{lemma: expectations k3}
    For $0 \leq i < j, k \leq n$
    \begin{align*}
        \E[Q_i(v_j, v_k)] &= Q_0(v_j, v_k) = \mathbf{1}\{v_jv_k \in E(G)\}\alpha^2, \\
        \E[H_i(v_k)] &\geq H_0(v_k) - (1+ \eps/10)\sum_{j = 1}^iQ_{j - 1}(v_j, v_k) \geq \alpha\log(1/\alpha) - (1+ \eps/10)d\alpha^2.
    \end{align*}
\end{lemma}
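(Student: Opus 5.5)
Both statements will be proved by showing one-step conditional bounds and then telescoping from $i=0$. For the energy, I will show $\E[Q_i(v_j,v_k)\mid p_{i-1}(\cdot)] = Q_{i-1}(v_j,v_k)$ whenever $i<j,k$. If $v_jv_k\notin E(G)$ there is nothing to prove. Otherwise, since $G$ is $K_3$-free, the vertex $v_i$ cannot be a left-neighbor of both $v_j$ and $v_k$.

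So at step $i$ at most one of $p_i(v_j),p_i(v_k)$ changes. The other one is determined by $p_{i-1}(\cdot)$, and the product's conditional expectation equals the unchanged weight times $\E[p_i(\cdot)\mid p_{i-1}(\cdot)]$. By Claim~\ref{claim: pi k3}, that conditional expectation is the previous weight. (The claim is stated for $v_k$, but the proof applies to any $v_j$ with $j>i$.) Taking expectations and iterating gives $\E[Q_i(v_j,v_k)] = Q_0(v_j,v_k)=\mathbf{1}\{v_jv_k\in E(G)\}\alpha^2$.

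For the entropy, I will compute $\E[H_i(v_k)\mid p_{i-1}(\cdot)]$ for $i<k$. If $v_i\notin N_L(v_k)$, or $v_k$ is already bad, the weight is unchanged. Write $x=p_{i-1}(v_k)$ and $y=p_{i-1}(v_i)$, and first treat the non-thresholded case $x/(1-y)\le\hat p$. With probability $1-y$ the new weight is $x/(1-y)$, and with probability $y$ it is $0$, which has entropy $0$. Hence
\[\E[H_i(v_k)\mid p_{i-1}(\cdot)] = -x\log\frac{x}{1-y} = H_{i-1}(v_k) + x\log(1-y) \ge H_{i-1}(v_k) - \frac{xy}{1-y}.\]
Since $y\le\hat p=\alpha^{\eps/10}\to 0$, we have $1/(1-y)\le 1+\eps/10$ for $d$ large, and $xy=Q_{i-1}(v_i,v_k)$. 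This gives the per-step loss $(1+\eps/10)Q_{i-1}(v_i,v_k)$. If $v_i$ is bad, we have $a_i=0$ always, so no update happens. I would double-check this against the algorithm, but in any case the bound $-\log(1-y)\le(1+\eps/10)y$ covers it.

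The thresholded case $x/(1-y)>\hat p$ is where I expect the only real work. Here the new weight takes values in $\{0,\hat p\}$ with mean $x$, so it equals $\hat p$ with probability $x/\hat p$ and
\[\E[H_i(v_k)\mid p_{i-1}(\cdot)] = x\log(1/\hat p).\]
The concavity of $t\mapsto -t\log t$ does not help directly, since the spread is large. Instead I will compare with $-x\log\frac{x}{1-y}$: because $\hat p< x/(1-y)$, we get $\log(1/\hat p) > \log\frac{1-y}{x}$. So this case is no worse than the unthresholded one, and the same inequality holds.

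Summing the per-step bounds over steps $1,\dots,i$ and taking expectations yields
\[\E[H_i(v_k)]\ge H_0(v_k)-(1+\eps/10)\sum_{j=1}^{i}\E[Q_{j-1}(v_j,v_k)].\]
(I read the displayed statement's sum as being inside the expectation.) By the energy identity, each term is $\mathbf{1}\{v_jv_k\in E(G)\}\alpha^2$. Since $v_k$ has at most $d$ left-neighbors, the sum is at most $d\alpha^2$. Together with $H_0(v_k)=\alpha\log(1/\alpha)$, this gives the final bound.
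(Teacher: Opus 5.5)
Your proposal is correct and follows essentially the same route as the paper. The energy is a martingale because $K_3$-freeness prevents $v_i$ from being a left-neighbor of both endpoints of an edge. The entropy bound uses the same two-case analysis: the unthresholded case via $\log\frac{1}{1-y}\le(1+\eps/10)y$, and the thresholded case via $\hat p< x/(1-y)$. This is followed by telescoping over at most $d$ left-neighbors. Your reading of the sum as lying inside the expectation is the intended one. Your hedge about a bad $v_i$ is harmless under either reading of the algorithm.
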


\begin{proof}\stepcounter{ForClaims} \renewcommand{\theForClaims}{\ref{lemma: expectations k3}}
    Let us first consider to $Q_i(v_j, v_k)$.
    Without loss of generality, let us assume that $j < k$ and $v_jv_k \in E(G)$.
    Note the following:
    \[\E[Q_i(v_j, v_k)] = \E[p_i(v_j)p_i(v_k)].\]
    Similar to the proof of Lemma~\ref{lemma: K3 martingale}, it is enough to show that $p_i(v_j)p_i(v_k)$ is a martingale in $i$ for $i = 0, \ldots, j-1$.

    \begin{claim}\label{claim: hi k3}
        $\E[p_i(v_j)p_i(v_k) \mid p_{i-1}(\cdot)] = p_{i-1}(v_j)p_{i-1}(v_k)$ for $i = 1, \ldots, j-1$.
    \end{claim}

    \begin{claimproof}
        Let $M_{i, j}(c) = \frac{p_i(v_j)}{p_{i-1}(v_j)}$ and $M_{i, k}(c) = \frac{p_i(v_k)}{p_{i-1}(v_k)}$ be random variables.
        We have
        \begin{align*}
            \E[p_i(v_j)p_i(v_k) \mid p_{i-1}(\cdot)] &= p_{i-1}(v_j)p_{i-1}(v_k)\E[M_{i, j}(c)M_{i, k}(c)\mid p_{i-1}(\cdot)].
        \end{align*}
        If $v_i \notin N_L(v_j) \cup N_L(v_k)$, we trivially have $\E[M_{i, j}(c)M_{i, k}(c)\mid p_{i-1}(\cdot)] = 1$.
        Additionally, if $v_i \in N_L(v_j) \triangle N_L(v_k)$, an identical argument as in Claim~\ref{claim: pi k3} yields $\E[M_{i, j}(c)M_{i, k}(c)\mid p_{i-1}(\cdot)] = 1$.
        Since $G$ is $K_3$-free, we have $N_L(v_j) \cap N_L(v_k) = \emptyset$ and so the above cover all cases.
    \end{claimproof}

    Noting that $Q_0(v_j,v_k) = \mathbf{1}\{v_jv_k \in E(G)\}\alpha^2$, applying Claim~\ref{claim: hi k3} recursively completes the proof for $Q_i(v_j,v_k)$.

    Next, let us turn our attention to $H_i(v_k)$.
    The following claim will be key to the argument:

    \begin{claim}\label{claim: qi k3}
        For $i = 1, \ldots, k-1$, we have
        \[\E[p_i(v_k)\log p_i(v_k) \mid p_{i-1}(\cdot)] \leq p_{i-1}(v_k)\log p_{i-1}(v_k) + (1+ \eps/10)\mathbf1\{v_iv_k \in E(G)\}p_{i-1}(v_i) p_{i-1}(v_k).\]
    \end{claim}
    
    \begin{claimproof}
        The claim is trivial if $v_i \notin N_L(v_k)$ or $v_i \in B_{i-1}$ so we may assume that $v_i \in N_L(v_k)$ and $v_i \notin B_{i-1}$.
        By convention, we let $0\log 0 = 0$ (indeed, $\lim_{x \to 0}x\log x = 0$).
        As in the proof of Claim~\ref{claim: pi k3}, we split into two cases.
        \begin{itemize}[itemsep=5pt]
            \item \textbf{Case 1:} $\frac{p_{i-1}(v_k)}{1 - p_{i-1}(v_i)} \leq \hat p$.
            We have
            \begin{align*}
                \E[p_i(v_k) \log p_i(v_k)\mid p_{i-1}(\cdot)] &= (1 - p_{i-1}(v_i))\frac{p_{i-1}(v_k)}{1 - p_{i-1}(v_i)}\log \left(\frac{p_{i-1}(v_k)}{1 - p_{i-1}(v_i)}\right) \\
                &= p_{i-1}(v_k)\log p_{i-1}(v_k) + p_{i-1}(v_k) \log \left(\frac{1}{1 - p_{i-1}(v_i)}\right) \\
                &\leq p_{i-1}(v_k)\log p_{i-1}(v_k) + (1+ \eps/10)p_{i-1}(v_i)p_{i-1}(v_k),
            \end{align*}
            where we use the fact that $\log (1/(1-x)) < (1+ \eps/10)x$ for $x \leq \hat p$.
    
            \item \textbf{Case 2:} $\frac{p_{i-1}(v_k)}{1 - p_{i-1}(v_i)} > \hat p$.
            We have
            \begin{align*}
                \E[p_i(v_k)\log p_i(v_k)\mid p_{i-1}(\cdot)] &= (1 - p_{i-1}(v_i))\cdot \hat p \log \hat p \\
                &\qquad \qquad + p_{i-1}(v_i)\left(\frac{p_{i-1}(v_k)/\hat p - (1 - p_{i-1}(v_i))}{p_{i-1}(v_i)}\right)\cdot \hat p \log \hat p \\
                &= p_{i-1}(v_k)\log \hat p \\
                &< p_{i-1}(v_k)\log \left(\frac{p_{i-1}(v_k)}{1 - p_{i-1}(v_i)}\right) \\
                &\leq p_{i-1}(v_k)\log p_{i-1}(v_k) + (1+ \eps/10)p_{i-1}(v_i)p_{i-1}(v_k).
            \end{align*}
        \end{itemize}
        The above covers all cases and completes the proof.
    \end{claimproof}

    It follows from Claim~\ref{claim: qi k3} that
    \[\E[H_i(v_k) \mid p_{i-1}(\cdot)] \geq H_{i-1}(v_k) - (1+ \eps/10)Q_{i-1}(v_i, v_k).\]
    Noting that $H_0(v_k) = \alpha \log (1/\alpha)$, $Q_0(v_j, v_k) = \mathbf{1}\set{v_jv_k \in E(G)}\alpha^2$, and $|N_L(v_k)| \leq d$, applying the above recursively completes the proof.
\end{proof}

With the above in hand, let us show that $\hat p\Pr[v_k \in B_{k-1}]$ is small.
    We have the following:
    \begin{align*}
        -H_{k-1}(v_k) &= p_{k-1}(v_k) \log p_{k-1}(v_k) \\
        &= (p_{k-1}(v_k) \log p_{k-1}(v_k) - p_{k-1}(v_k) \log \alpha +  p_{k-1}(v_k) \log \alpha) \\
        &= p_{k-1}(v_k)\log \alpha + p_{k-1}(v_k) \log (p_{k-1}(v_k)/\alpha).
    \end{align*}
    Note that if $p_{k-1}(v_k) > 0$, then $p_{k-1}(v_k) \geq \alpha$ since a weight either increases, remains the same, or is set to $0$ during each iteration.
    This implies that $p_{k-1}(v_k)\log (p_{k-1}(v_k)/\alpha) \geq 0$.
    It follows that
    \begin{align*}
        -\E[H_{k-1}(v_k)] &= \E[p_{k-1}(v_k)\log \alpha + p_{k-1}(v_k) \log (p_{k-1}(v_k)/\alpha)] \\
        &= \E[p_{k-1}(v_k)]\log \alpha + \E[p_{k-1}(v_k) \log (p_{k-1}(v_k)/\alpha)] \\
        &\geq \alpha\log \alpha + \hat p \log(\hat p / \alpha)\Pr[v_k \in B_{k-1}],
    \end{align*}
    where we use Lemma~\ref{lemma: K3 martingale}.
    Applying Lemma~\ref{lemma: expectations k3}, we obtain
    \begin{align}\label{eq: B bound k3}
       \hat p \log(\hat p / \alpha)\Pr[v_k \in B_{k-1}] \leq (1+ \eps/10)d\alpha^2.
    \end{align}
    Note the following:
    \[\frac{(1+ \eps/10)d\alpha}{\log (\hat p/\alpha)} = \frac{(1+\eps/10)\log d}{(2 + \eps/2)(1-\eps/10)\log \left(1/\alpha\right)} \leq \frac{1}{2}\]
    for $d$ sufficiently large.
    Plugging this into \eqref{eq: B bound k3} implies $\hat p\Pr[v_k \in B_{k-1}] \leq \alpha/2$.
Combining \eqref{eq: K3 main bound} with Lemma~\ref{lemma: expectations k3} and the above computations, we have
\[\Pr[v_k \in I] = \E[p_{k-1}(v_k)] - \hat p \Pr[v_k \in B_{k-1}] \geq \frac{\alpha}{2},\]
completing the proof of Theorem~\ref{theo: k3}.

\section{Locally $r$-colorable graphs: proofs of Theorems~\ref{theo: r colorable ordinary coloring} and~\ref{theo: local demands r colorable}}\label{section: locally colorable}

In this section, we will prove our results pertaining to locally $r$-colorable graphs.
We begin by stating a key result, which shows that a locally $r$-colorable graph admits a $\phi$-coloring with respect to an appropriate demand function $\phi$.

\begin{proposition}\label{prop: r colorable}
    For all $\eps > 0$ and $r \in \N$ there exists $\delta_0 \in \N$ such that the following holds for $\delta \geq \delta_0$ and $n \in \N$.
    Let $G$ be an $n$-vertex locally $r$-colorable graph and let $\phi$ be a demand function for $G$.
    If there exists an ordering $(v_1, \ldots, v_n)$ of the vertices such that $\phi(v_i) \leq \phi(v_j)$ whenever $i < j$ and
    \[\phi(v) \leq \dfrac{\log d^*(v)}{(8+\eps)\,d^*(v)\,\log(2r)}, \qquad \text{for each } v \in V(G),\]
    where $d^*(v) \coloneqq \max\{d_L(v),\,\delta\}$.
    Then $G$ admits a $\phi$-coloring.
    
    Moreover, there is a $\poly(n)$-time algorithm that samples an independent set $I \subseteq V(G)$ such that
    \[\Pr[v\in I] \geq \phi(v), \qquad \text{for each } v \in V(G).\]
\end{proposition}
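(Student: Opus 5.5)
We run the activation/selection procedure sketched in \S\ref{subsection: overview}, adapted to vertex-dependent initial weights, and follow the template of \S\ref{section: K3} (martingale, energy, entropy).

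\textbf{The algorithm.} Process $v_1,\dots,v_n$ in the given order. Set $p_0(v)=\alpha_v \coloneqq 2(1+\eta)\phi(v)$ for a small $\eta=\eta(\eps)>0$, and use threshold $\hat p=1$. At step $i$, if $v_i$ is not bad, activate $v_i$ with probability $p_{i-1}(v_i)$, and conditional on activation select it into $I$ with probability $1/2$. The right-neighbours are then updated as follows.
\begin{itemize}
\item If $v_i$ is selected, set $p_i(v_k)=0$ for all $v_k\in N_R(v_i)$.
\item If $v_i$ is not activated, leave the weights unchanged.
\item If $v_i$ is activated but not selected, fix a proper $r$-colouring of $G[N_R(v_i)]$ (which exists by local $r$-colourability) and choose one of its $r$ colour classes $J$ uniformly at random. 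Set $p_i(v_k)=\min\{2r\,p_{i-1}(v_k),1\}$ for $v_k\in J$, and $p_i(v_k)=0$ for the rest of $N_R(v_i)$.
\end{itemize}
When the cap is reached, an equalizing coin flip (exactly as in Algorithm~\ref{algorithm: fcp k3}) restores the martingale property $\E[p_i(v_k)\mid p_{i-1}(\cdot)]=p_{i-1}(v_k)$. The uncapped conditional expectation is
\[
(1-p)\cdot p_k+\tfrac p2\cdot 0+\tfrac p2\cdot\tfrac1r\cdot 2r\,p_k=p_k .
\]
Bad vertices, those with weight $\hat p=1$, are never added to $I$.

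\textbf{Independence.} If $v_i,v_k$ are adjacent with $i<k$ and $v_i$ is selected, then $p_i(v_k)=0$, so $v_k$ is never added. Hence $I$ is independent, and as in \eqref{eq: K3 main bound},
\[
\Pr[v_k\in I]=\tfrac12\bigl(\E[p_{k-1}(v_k)]-\Pr[v_k\in B_{k-1}]\bigr)=\tfrac12\bigl(\alpha_k-\Pr[v_k\in B_{k-1}]\bigr).
\]

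\textbf{Entropy bound.} Per step, for $v_i\in N_L(v_k)$ the uncapped update gives
\[
\E[p_i\log p_i\mid\cdot]=\tfrac p2\cdot\tfrac1r\cdot 2r\,p_k\log(2r\,p_k)+(1-p)\,p_k\log p_k = p_k\log p_k+p\,p_k\log(2r).
\]
So the entropy loss is at most $\log(2r)\,p_{i-1}(v_i)p_{i-1}(v_k)$, and capping only helps, as in Claim~\ref{claim: qi k3}. Summing over left-neighbours gives
\[
\E[H_{k-1}(v_k)]\ge \alpha_k\log(1/\alpha_k)-\log(2r)\sum_{v_i\in N_L(v_k)}\E[p_{i-1}(v_i)p_{i-1}(v_k)].
\]

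\textbf{Main obstacle: the energy term.} Unlike the $K_3$-free case, the pairs $(v_i,v_k)$ may have common left-neighbours, so $p(v_i)p(v_k)$ is no longer a martingale. Suppose $u$ is a common left-neighbour of $v_i$ and $v_k$. Then $v_i,v_k\in N_R(u)$ are adjacent and hence lie in different colour classes, so they are never both boosted. Computing in the uncapped case, the product after processing $u$ is
\[
(1-p_u)\,p_ip_k+\tfrac{p_u}{2}\cdot 0+\tfrac{p_u}{2}\cdot 0=(1-p_u)\,p_ip_k\le p_ip_k .
\]
Under capping and coin flips I expect the same inequality, since the product still vanishes whenever either vertex is killed or reset. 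So the energy is a supermartingale, and $\E[p_{i-1}(v_i)p_{i-1}(v_k)]\le\alpha_i\alpha_k\le\alpha_k^2$, using the demand ordering for $i<k$. This ordering is exactly why we process in increasing demand. Thus
\[
\E[H_{k-1}(v_k)]\ge\alpha_k\log(1/\alpha_k)-\log(2r)\,d_L(v_k)\,\alpha_k^2 .
\]

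\textbf{Finishing.} Weights only increase, stay fixed, or drop to $0$, so nonzero weights are at least $\alpha_k$. The argument leading to \eqref{eq: B bound k3}, with $\hat p=1$, then gives
\[
\log(1/\alpha_k)\Pr[v_k\in B_{k-1}]\le \log(2r)\,d_L(v_k)\,\alpha_k^2\le\log(2r)\,d^*(v_k)\,\alpha_k^2 .
\]
From the hypothesis, $\alpha_k\le\frac{2(1+\eta)\log d^*}{(8+\eps)d^*\log(2r)}$. Since $\log(1/\alpha_k)=(1-o(1))\log d^*$ for $\delta\ge\delta_0$, this yields $\Pr[v_k\in B_{k-1}]\le(\tfrac14+o(1))\alpha_k$; the constant $8$ is where the factors $2$ (selection) and $4$ (as in Theorem~\ref{theo: k3}) combine. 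Therefore
\[
\Pr[v_k\in I]\ge\tfrac12\bigl(\tfrac34-o(1)\bigr)\alpha_k\ge\phi(v_k).
\]
The slack needed here is not tight, so I would tune $\alpha_k=c\,\phi(v_k)$ with $c$ slightly above $2$ so that $\tfrac{c}{2}\bigl(1-\tfrac{c}{8}-o(1)\bigr)\ge1$, adjusting $\eta$ against $\eps$ if the constant turns out tight. The procedure clearly runs in polynomial time.
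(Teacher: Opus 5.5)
Your approach is the paper's. It uses the same activation/selection algorithm with threshold $1$, the martingale weights, and the energy bound from the fact that adjacent right-neighbours of a common vertex lie in different colour classes. It also uses the same $\log(2r)$ entropy increment and the same comparison of initial weights via the demand ordering. The gap is in the final constant: as written, your argument does not reach $\phi(v_k)$, and the repair you propose does not work either.

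\textbf{The constant.} Write $\alpha_v=c\,\phi(v)$. Your bounds give $\Pr[v_k\in B_{k-1}]\le\bigl(\tfrac{c}{8+\eps}+o(1)\bigr)\alpha_{v_k}$, and hence
\[
\Pr[v_k\in I]\ \ge\ \tfrac{c}{2}\Bigl(1-\tfrac{c}{8+\eps}-o(1)\Bigr)\phi(v_k).
\]
\begin{itemize}
\item For your choice $c=2(1+\eta)$ this is $(\tfrac34-o(1))(1+\eta)\phi(v_k)<\phi(v_k)$. So the step $\tfrac12(\tfrac34-o(1))\alpha_k\ge\phi(v_k)$ is false for small $\eta$.
\item The function $c\mapsto\tfrac c2\bigl(1-\tfrac{c}{8+\eps}\bigr)$ is maximized at $c=(8+\eps)/2$, where it equals $\tfrac{8+\eps}{8}$. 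At $c$ near $2$ it is about $\tfrac34$. So ``$c$ slightly above $2$'' cannot work.
\item The constant $8$ is exactly the optimum of this analysis, so there is no slack. You must take $c\approx 4$, using the $\eps$ only to absorb the $o(1)$ losses.
\end{itemize}
The paper takes $\alpha_v=4\phi(v)$. Then $\alpha_v\le\frac{\log d^*(v)}{(2+\eps/4)\,d^*(v)\log(2r)}$, so $\frac{d^*\log(2r)\,\alpha}{\log(1/\alpha)}\le\frac12$ for $\delta$ large. This gives $\Pr[v_k\in B_{k-1}]\le\alpha_{v_k}/2$ and $\Pr[v_k\in I]\ge\frac12(\alpha_{v_k}-\frac{\alpha_{v_k}}{2})=\phi(v_k)$. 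It looks like you carried over the $K_3$ relation $\alpha=2\phi$ without doubling it to account for the selection coin.

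\textbf{The energy under capping.} This is a secondary point. Your justification that ``the product still vanishes whenever either vertex is killed or reset'' is not correct.
\begin{itemize}
\item Suppose $2r\,p_{i-1}(v_k)>1\ge 2r\,p_{i-1}(v_j)$. Then $v_j\in J$ can be boosted while $v_k\notin J$ is reset to weight $1$ by the equalizing flip. The product is then $2r\,p_{i-1}(v_j)\ne 0$. The supermartingale inequality survives only because $\mu_{i,k}\le p_{i-1}(v_k)$.
\item When both weights exceed $1/(2r)$, whether your unindicated product is a supermartingale depends on how the two equalizing flips are coupled. With a single shared coin it fails for $r\ge 2$, for weights close to $1$.
\end{itemize}
The paper sidesteps this by defining the energy with the factor $\mathbf{1}\{v_j\notin B_i\}$. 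This costs nothing: bad vertices are never activated, so the entropy loss at step $i$ already carries $\mathbf{1}\{v_i\notin B_{i-1}\}$. With that indicator, every case of the supermartingale check is immediate or reduces to $\mu_{i,k}\le p_{i-1}(v_k)$.
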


Before we proceed, we note that the above result implies Theorems~\ref{theo: r colorable ordinary coloring} and~\ref{theo: local demands r colorable}.
Indeed, applying Proposition~\ref{prop: r colorable} with $\phi(v) = \frac{\log d}{(8+\eps)\,d\,\log(2r)}$ for each $v$, $\delta = d$, and a degeneracy ordering of $G$ yields Theorem~\ref{theo: r colorable ordinary coloring}.
Similarly, applying Proposition~\ref{prop: r colorable} ordering the vertices in increasing order of demands, $\delta = \delta(G)$, and noting that $d^*(v) \leq \deg(v)$ yields Theorem~\ref{theo: local demands r colorable}.
The remainder of this section is devoted to the proof of Proposition~\ref{prop: r colorable}.

Let us describe our independent set procedure (see Algorithm~\ref{algorithm: fcp}).
The algorithm takes as input an $n$-vertex locally $r$-colorable graph $G$, a demand function $\phi$ for $G$, and an ordering $(v_1, \ldots, v_n)$ of the vertices $V(G)$, and outputs an independent set in $G$.
We refer the reader to \S\ref{subsection: overview} for an informal overview of the procedure.
Let us now provide a formal description of the algorithm.

\vspace{10pt}
\begin{breakablealgorithm}
\caption{Independent Set Procedure for Locally $r$-Colorable Graphs}\label{algorithm: fcp}
\begin{flushleft}
\textbf{Input}: An $n$-vertex locally $r$-colorable graph $G$, a demand function $\phi$ for $G$, and an ordering $(v_1, \ldots, v_n)$ of the vertices $V(G)$. \\
\textbf{Output}: An independent set in $G$.
\end{flushleft}

\begin{enumerate}[itemsep=5pt]
    \item \textbf{Initialize:} Set $I = B_0 = \emptyset$.
    For each $v \in V(G)$, set $p_0(v) = \alpha_v$, where $\alpha_v \coloneqq 4\phi(v)$ and fix a proper $r$-coloring  $\psi_v\,:\, N(v) \to [r]$ for $G[N(v)]$.

    \item \textbf{Iterate:} For $i = 1, \ldots, n$, do the following:
        \begin{enumerate}[itemsep=5pt]
                \item For each $v_k$ such that $v_k \notin N_R(v_i)$, set $p_i(v_k) = p_{i-1}(v_k)$.
                \item \label{step: ai etai Ji} Let $a_{i} \sim \mathrm{BER}(p_{i-1}(v_i))$, $s_{i} \sim \mathrm{BER}(1/2)$, and let $J_{i}$ be a color class chosen uniformly at random from the coloring induced by $\psi_{v_i}$ on $N_R(v_i)$.
                \item If $a_{i} = 1$ and $s_{i} = 1$, then add $v$ to $I$.
                \item For each $v_k \in N_R(v_i)$, do the following:
                \begin{enumerate}[label=\ep{\roman*}, itemsep=5pt]
                    \item\label{step: not activated} If $a_{i} = 0$ or $p_{i-1}(v_k) = 1$, set $p_i(v_k) = p_{i-1}(v_k)$.

                    \item\label{step: well behaved r} If $2rp_{i-1}(v_k) \leq 1$, make the following update: 
                    \begin{itemize}[itemsep=5pt]
                        \item If $a_{i} = 1$ and $s_{i} = 0$, then set $p_i(v_k) = 2rp_{i-1}(v_k)$ if $v_k\in J_{i}$ and $0$ otherwise.

                        \item If $a_{i} = 1$ and $s_{i} = 1$, then set $p_i(v_k) = 0$.

                    \end{itemize}

                    \item\label{step: ub violated r} If $2rp_{i-1}(v_k) > 1$, make the following update:

                    \begin{itemize}[itemsep=5pt]
                        \item If $a_{i} = 1$ and $s_{i} = 0$, then set $p_i(v_k) = 1$ if $v_k\in J_{i}$.
                        If $v_k \notin J_{i}$, set $p_i(v_k) = 1$ with probability $\mu_{i, k}\coloneqq \frac{p_{i-1}(v_k) - \frac{1}{2r}}{1 - \frac{1}{2r}}$ and $0$ otherwise.
                        
                        \item If $a_{i} = 1$ and $s_{i} = 1$, then set $p_i(v_k) = 1$ with probability $\mu_{i, k}$ and $0$ otherwise.
                    \end{itemize}
                    
                \end{enumerate}
            \item\label{step: bad update} Set $B_i = \set{v_k\,:\, p_i(v_k) = 1,\, k > i}$.
        \end{enumerate}
\end{enumerate}
\end{breakablealgorithm}
\vspace{10pt}

A few remarks are in order.
First, note that Algorithm~\ref{algorithm: fcp} can be implemented in $O(n^2)$ time satisfying the efficiency requirements and performance guarantees outlined in Proposition~\ref{prop: r colorable}.
Additionally, it is easy to see that our update steps ensure $I$ is an independent set.
Furthermore, as a result of Step~\ref{step: not activated}, once a vertex is bad, it remains bad.
Finally, we note that the probabilities $\mu_{i, k}$ are well-defined.
Indeed, $\mu_{i, k} \in [0, 1]$ as long as $p_{i-1}(v_k) \in [1/(2r),\, 1]$ (which is the case when we are in Step~\ref{step: ub violated r}).
It remains to show that $\Pr[v_k \in I]$ is large for each vertex $v_k$.
To this end, note the following:
\begin{align}\label{eq: local r colorable main bound}
    \Pr[v_k \in I] = \E\left[\frac{1}{2}p_{k-1}(v_k)\mathbf{1}\set{v_k \notin B_{k-1}}\right] = \frac{1}{2}\left(\E[p_{k-1}(v_k)] - \hat p \Pr[v_k \in B_{k-1}]\right).
\end{align}

Let us begin by showing that the random variable $p_i(v_k)$ is a martingale in $i$, which in particular implies $\E[p_{k-1}(v_k)] = \alpha_{v_k}$.

\begin{lemma}\label{lemma: local r colorable martingale}
    For $0 \leq i < k \leq n$, we have $\E[p_i(v_k)] = \alpha_{v_k}$.
\end{lemma}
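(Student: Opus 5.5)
We mirror the proof of Lemma~\ref{lemma: K3 martingale}: the goal is to show that $p_i(v_k)$ is a martingale in $i$ for $i = 0, \ldots, k-1$. Concretely, we prove the claim
\[\E[p_i(v_k) \mid p_{i-1}(\cdot)] = p_{i-1}(v_k) \qquad \text{for } i = 1, \ldots, k-1.\]
Since $p_0(v_k) = \alpha_{v_k}$, iterating this claim and taking expectations gives the lemma.

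To prove the claim, we condition on the history up to step $i-1$ and split into cases according to which branch of the algorithm is used.

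\textbf{Trivial cases.} If $v_k \notin N_R(v_i)$, the weight is unchanged. If $p_{i-1}(v_k) = 1$, Step~\ref{step: not activated} leaves it unchanged. The event $a_i = 0$ also leaves it unchanged. This event has probability $1 - p_{i-1}(v_i)$, so in the remaining cases it suffices to verify that
\[\E[p_i(v_k) \mid a_i = 1, p_{i-1}(\cdot)] = p_{i-1}(v_k)\]
whenever $p_{i-1}(v_i) > 0$. (If $v_i$ is bad, it is never added to $I$, but the algorithm as written still draws $a_i$. The verification below does not depend on this.)

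\textbf{Case $2rp_{i-1}(v_k) \le 1$.} Conditional on $a_i = 1$, we have $s_i = 0$ with probability $1/2$. In that event, $v_k \in J_i$ with probability exactly $1/r$, because $J_i$ is a uniform color class among the $r$ classes of $\psi_{v_i}$ restricted to $N_R(v_i)$. We should treat empty classes as allowed and choose uniformly among all $r$ labels; this is the natural reading, and it is what gives exactly $1/r$. So the conditional expectation is
\[\tfrac12 \cdot \tfrac1r \cdot 2rp_{i-1}(v_k) = p_{i-1}(v_k).\]

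\textbf{Case $2rp_{i-1}(v_k) > 1$.} Write $p = p_{i-1}(v_k)$ and $\mu = \mu_{i,k} = (p - \tfrac{1}{2r})/(1 - \tfrac{1}{2r})$. Conditional on $a_i = 1$, the probability that $p_i(v_k) = 1$ is
\[\tfrac12\Bigl(\tfrac1r + \bigl(1 - \tfrac1r\bigr)\mu\Bigr) + \tfrac12\mu = \tfrac{1}{2r} + \bigl(1 - \tfrac{1}{2r}\bigr)\mu = p.\]
So the conditional expectation is again $p$, since $p_i(v_k) \in \{0, 1\}$ here.

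The only real subtlety is ensuring $\Pr[v_k \in J_i] = 1/r$ exactly, which depends on the uniform-label interpretation above. Beyond that, everything is routine bookkeeping.
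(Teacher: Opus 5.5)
Your proposal is correct and is essentially the paper's proof. The paper likewise establishes $\E[p_i(v_k) \mid p_{i-1}(\cdot)] = p_{i-1}(v_k)$ by splitting into the cases $2rp_{i-1}(v_k) \le 1$ and $2rp_{i-1}(v_k) > 1$, performs the same two computations (implicitly using $\Pr[v_k \in J_i] = 1/r$, as in your uniform-label reading), and iterates from $p_0(v_k) = \alpha_{v_k}$; your check that the computation still works when $v_i \in B_{i-1}$, which the paper simply calls trivial, is harmless extra care.
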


\begin{proof}\stepcounter{ForClaims} \renewcommand{\theForClaims}{\ref{lemma: local r colorable martingale}}
    The following claim shows that $p_i(v_k)$ is a martingale in $i$ for $i = 0, \ldots, k-1$.

    \begin{claim}\label{claim: pi}
        $\E[p_i(v_k) \mid p_{i-1}(\cdot)] = p_{i-1}(v_k)$ for $i = 1, \ldots, k-1$.
    \end{claim}
    
    \begin{claimproof}
        If $v_i \notin N_L(v_k)$ or $\{v_i, v_k\} \cap B_{i-1} \neq \emptyset$, the claim is trivial.
        Suppose $v_i \in N_L(v_k)$ and $\{v_i, v_k\} \cap B_{i-1} = \emptyset$.
        We have two cases to consider.
        \begin{itemize}[itemsep=5pt]
            \item \textbf{Case 1:} $2rp_{i-1}(v_k) \leq 1$.
            We have
            \begin{align*}
                \E[p_i(v_k) \mid p_{i-1}(\cdot)] &= (1 - p_{i-1}(v_i))p_{i-1}(v_k) + p_{i-1}(v_i)\cdot\frac{1}{2}\cdot\frac{1}{r}\,(2rp_{i-1}(v_k)) \\
                &=  p_{i-1}(v_j, c),
            \end{align*}
            as desired.
    
            \item \textbf{Case 2:} $2rp_{i-1}(v_k) > 1$.
            We have
            \begin{align*}
                \E[p_i(v_k) \mid p_{i-1}(\cdot)] &= (1 - p_{i-1}(v_i))p_{i-1}(v_k) + p_{i-1}(v_i)\cdot\frac{1}{2}\cdot\frac{1}{r}\cdot 1 \\
                &\qquad \qquad + p_{i-1}(v_i)\cdot\left(1 - \frac{1}{2r}\right)\mu_{i, k }\cdot 1 \\
                &= p_{i-1}(v_k),
            \end{align*}
            as desired.
            Indeed, we define $\mu_{i, k}$ so that the above equality holds.
        \end{itemize}
        The above covers all cases and completes the proof.
    \end{claimproof}

    Noting that $p_0(v_k) = \alpha_{v_k}$, applying Claim~\ref{claim: pi} recursively completes the proof.
\end{proof}

To assist with the remainder of the proof, we define the following random variables for $0 \leq i < j, k \leq n$.
\begin{align*}
    Q_i(v_j, v_k) &= \mathbf{1}\{v_jv_k \in E(G),\, v_j \notin B_i\}p_i(v_j)p_i(v_k), \\
    H_i(v_k) &= -p_i(v_k)\log p_i(v_k).
\end{align*}
We begin by computing the expected values of each of the above random variables.

\begin{lemma}\label{Lemma: expectations}
    The following hold:
    \begin{align*}
        \E[Q_i(v_j, v_k)] &\leq \mathbf{1}\{v_jv_k \in E(G)\}\alpha_{v_k}^2, & &\text{for } 0 \leq i < j < k \leq n; \\
        \E[H_i(v_k)] &\geq \alpha_{v_k}\log(1/\alpha_{v_k}) - d^*(v_k)\log(2r)\alpha_{v_k}^2,  & &\text{for } 0 \leq i < k \leq n.
    \end{align*}
\end{lemma}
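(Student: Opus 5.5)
We follow the template of Lemma~\ref{lemma: expectations k3}, proving a one-step supermartingale inequality for each quantity and then iterating from the initial values $Q_0$ and $H_0$.

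\textbf{Energy.} Fix $i<j<k$ with $v_jv_k\in E(G)$. We show
\[\E[Q_i(v_j,v_k)\mid p_{i-1}(\cdot)]\le Q_{i-1}(v_j,v_k).\]
If $v_j\in B_{i-1}$, then $v_j\in B_i$, since bad vertices stay bad by Step~\ref{step: not activated}. Both sides are then $0$, so we may assume $v_j\notin B_{i-1}$. If $v_i$ is adjacent to at most one of $v_j,v_k$, the weight of the other vertex is unchanged. The weight of the adjacent one is a martingale by Claim~\ref{claim: pi}, and dropping the indicator only helps, since $Q_i\le p_i(v_j)p_i(v_k)$. The real case is $v_i\in N_L(v_j)\cap N_L(v_k)$; triangles are now allowed. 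Here $v_j,v_k\in N(v_i)$ are adjacent, so $\psi_{v_i}(v_j)\neq\psi_{v_i}(v_k)$, and they can never lie in the same class $J_i$. If $a_i=0$, nothing changes. If $a_i=1$ and $s_i=1$, the weights that are not thresholded become $0$. If $a_i=1$ and $s_i=0$, then at most one of the two survives with a boosted weight, and the other is set to $0$ or (when thresholded) to $1$.

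The only way the product survives as a nonzero quantity on $\{v_j\notin B_i\}$ is when $v_j$ is not thresholded at time $i$. In that case, on activation, $p_i(v_j)p_i(v_k)=0$ unless both lie in $J_i$, which is impossible. So the product is $0$ on activation, and we get $\E[Q_i\mid\cdot]\le(1-p_{i-1}(v_i))Q_{i-1}\le Q_{i-1}$. If $v_j$ is thresholded and becomes $1$, it lies in $B_i$ and contributes $0$.

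Iterating gives $\E[Q_i]\le\alpha_{v_j}\alpha_{v_k}\le\alpha_{v_k}^2$. The last step uses the ordering: $j<k$ means $\phi(v_j)\le\phi(v_k)$.

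\textbf{Entropy.} The analogue of Claim~\ref{claim: qi k3} is
\[\E[p_i(v_k)\log p_i(v_k)\mid\cdot]\le p_{i-1}(v_k)\log p_{i-1}(v_k)+\log(2r)\,\mathbf 1\{v_i\in N_L(v_k),\,v_i\notin B_{i-1}\}\,p_{i-1}(v_i)p_{i-1}(v_k).\]
In the unthresholded case the computation is direct:
\[(1-p)x\log x+\tfrac{p}{2r}\cdot 2rx\log(2rx)=x\log x+p\,x\log(2r),\]
writing $p=p_{i-1}(v_i)$ and $x=p_{i-1}(v_k)$. In the thresholded case all surviving outcomes equal $1$ or $0$, so the expectation is $0$. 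We need $0\le x\log x+px\log(2r)$. Since $x>1/(2r)$, we have $\log x>-\log(2r)$, but this only gives $x\log x\ge -x\log(2r)$, which is weaker than needed when $p<1$.

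This is the main obstacle. Two fixes are possible. The first is to use $\E[p_i\log p_i]=0\le x\log(2rx)$ and note that concavity gives the bound only with $p$ replaced by $1$; that is too lossy. The second, which I would try first, is to convert the thresholded loss into a term controlled by the same energy. Equivalently, redefine so that the $v_i\notin B_{i-1}$ indicator appears. Summing, the total increment is at most
\[\log(2r)\sum_{v_i\in N_L(v_k)}\E[Q_{i-1}(v_i,v_k)]\le d_L(v_k)\log(2r)\alpha_{v_k}^2,\]
using the energy bound (with $v_i$ playing the role of $v_j$, and $i<k$). Finally $d_L\le d^*$, and $H_0=\alpha_{v_k}\log(1/\alpha_{v_k})$, which yields the stated bound.
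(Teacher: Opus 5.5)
\textbf{There is a genuine gap: the entropy claim is left unproved in the thresholded case.} Your overall structure matches the paper's. You prove a one-step supermartingale inequality for $\mathbf{1}\{v_j\notin B_i\}p_i(v_j)p_i(v_k)$ and a one-step bound on $\E[p_i(v_k)\log p_i(v_k)\mid p_{i-1}(\cdot)]$, then iterate using the energy bound with $v_i$ in the role of $v_j$. However, the thresholded case of the entropy step is miscomputed, and the ``main obstacle'' you describe comes from that slip rather than from any real difficulty.

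Write $x=p_{i-1}(v_k)$ and $p=p_{i-1}(v_i)$, with $2rx>1$ and $v_i,v_k\notin B_{i-1}$. Only on activation ($a_i=1$) is $p_i(v_k)\in\{0,1\}$. With probability $1-p$ we have $a_i=0$, and Step~\ref{step: not activated} leaves $p_i(v_k)=x$. Hence
\[
\E[p_i(v_k)\log p_i(v_k)\mid p_{i-1}(\cdot)]=(1-p)\,x\log x = x\log x + p\,x\log(1/x) < x\log x + p\,x\log(2r),
\]
using exactly the inequality $\log(1/x)<\log(2r)$ you already wrote down. Neither proposed fix supplies this. You reject the first yourself. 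The second, bounding the thresholded loss by the energy term, cannot work with your computed value: a loss of order $x\log(1/x)$ is not bounded by $\log(2r)\,p\,x$ when $p$ is small. So as written, the entropy claim is only asserted in the thresholded case.

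\textbf{The energy step also contains a false assertion.} You claim that on activation $p_i(v_j)p_i(v_k)=0$ unless both vertices lie in $J_i$. This fails when $v_j$ is unthresholded ($2rp_{i-1}(v_j)\le 1$) but $v_k$ is thresholded ($2rp_{i-1}(v_k)>1$). Consider the event $\{a_i=1,\,s_i=0,\,v_j\in J_i\}$, which has probability $p/(2r)$. On it, $v_j$ receives $2rp_{i-1}(v_j)$ and stays outside $B_i$. Meanwhile $v_k\notin J_i$ is set to $1$ with probability $\mu_{i,k}$, which is the possibility you mention one sentence earlier. So the ratio $p_i(v_j)p_i(v_k)/(p_{i-1}(v_j)p_{i-1}(v_k))$ has conditional expectation $(1-p)+p\,\mu_{i,k}/p_{i-1}(v_k)$. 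You then need $\mu_{i,k}\le p_{i-1}(v_k)$, which holds because $\frac{y-1/(2r)}{1-1/(2r)}\le y$ is equivalent to $y\le 1$. The same event also gives a nonzero product when $v_k\in B_{i-1}$ already; there the expectation is exactly $1$. The inequality you want still holds, but this is the paper's Case~2 in Claim~\ref{claim: hi}. It is the only nontrivial computation in the energy claim, and your argument skips it.
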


\begin{proof}\stepcounter{ForClaims} \renewcommand{\theForClaims}{\ref{Lemma: expectations}}

    Let us first consider the energy.
    Without loss of generality, let us assume that $v_jv_k \in E(G)$.
    Note the following:
    \[\E[Q_i(v_j, v_k)] = \E[\mathbf{1}\set{v_j \notin B_i}p_i(v_j, c)p_i(v_k)].\]
    Similar to the proof of Lemma~\ref{lemma: local r colorable martingale}, it is enough to show that $\mathbf{1}\set{v_j \notin B_i}p_i(v_j, c)p_i(v_k)$ is a supermartingale in $i$ for $i = 0, \ldots, j-1$.

    \begin{claim}\label{claim: hi}
        $\E[\mathbf{1}\set{v_j \notin B_i}p_i(v_j, c)p_i(v_k) \mid p_{i-1}(\cdot)] \leq \mathbf{1}\set{v_j \notin B_{i-1}}p_{i-1}(v_j, c)p_{i-1}(v_k)$ for $i = 1, \ldots, j-1$.
    \end{claim}

    \begin{claimproof}
        Let $M_{i, j}(c) = \frac{p_i(v_j, c)}{p_{i-1}(v_j, c)}$ and $M_{i, k}(c) = \frac{p_i(v_k)}{p_{i-1}(v_k)}$ be random variables.
        We have
        \[\E[\mathbf{1}\set{v_j \notin B_i}p_i(v_j, c)p_i(v_k) \mid p_{i-1}(\cdot)] = p_{i-1}(v_j, c)p_{i-1}(v_k)\E[\mathbf{1}\set{v_j \notin B_i}M_{i, j}(c)M_{i, k}(c)\mid p_{i-1}(\cdot)].\]
        If $v_i \notin N_L(v_j) \cup N_L(v_k)$, we trivially have 
        \[\E[\mathbf{1}\set{v_j \notin B_i}M_{i, j}(c)M_{i, k}(c)\mid p_{i-1}(\cdot)] = \mathbf{1}\set{v_j \notin B_{i-1}}.\]
        Additionally, if $v_i \in N_L(v_j) \triangle N_L(v_k)$, an identical argument as in Claim~\ref{claim: pi} yields yields the same bound.
        Furthermore, an identical argument applies if $\set{v_j, v_k} \cap B_{i-1} \neq \emptyset$.
        Consider the case that $v_i \in N_L(v_j) \cap N_L(v_k)$ and $\set{v_j, v_k} \cap B_{i-1} = \emptyset$.
        Note that since $v_jv_k \in E(G)$, we cannot have $\{v_j, v_k\} \subseteq J_{i}$.
        With this in hand, we have three cases to consider.
        \begin{itemize}[itemsep=5pt]
            \item \textbf{Case 1:} $2rp_{i-1}(v_j, c), 2rp_{i-1}(v_k) \leq 1$.
            It follows that $\mathbf{1}\set{v_j \notin B_i}M_{i, j}(c)M_{i, k}(c) \neq 0$ if and only if $a_{i} = 0$.
            In particular, we have 
            \begin{align*}
                \E[\mathbf{1}\set{v_j \notin B_i}M_{i, j}(c)M_{i, k}(c) \mid p_{i-1}(\cdot)] = (1-p_{i-1}(v_i)) \leq 1,
            \end{align*}
            as desired.
            
            \item \textbf{Case 2:} $2rp_{i-1}(v_j, c) \leq 1 < 2rp_{i-1}(v_k)$.
            It follows that $\mathbf{1}\set{v_j \notin B_i}M_{i, j}(c)M_{i, k}(c) \neq 0$ if and only if $a_{i} = 0$ or $a_{i} = 1$, $s_{i} = 1$, $v_j \in J_{i}$, and $v_k \in B_i$.
            We have
            \begin{align*}
                &~\E[\mathbf{1}\set{v_j \notin B_i}M_{i, j}(c)M_{i, k}(c) \mid p_{i-1}(\cdot)] \\
                &\qquad = (1-p_{i-1}(v_i)) + p_{i-1}(v_i)\cdot\frac{1}{2}\cdot\frac{1}{r}\cdot \mu_{i, k }\cdot 2r\cdot\frac{1}{p_{i-1}(v_k)} \\
                &\qquad = (1-p_{i-1}(v_i)) + p_{i-1}(v_i)\left(\frac{1 - \frac{1}{2rp_{i-1}(v_k)}}{1 - \frac{1}{2r}}\right) \leq 1,
            \end{align*}
            where the last step follows since $p_{i-1}(v_k) \leq 1$.

            \item \textbf{Case 3:} $2rp_{i-1}(v_j, c) > 1$.
            It follows that $\mathbf{1}\set{v_j \notin B_i}M_{i, j}(c)M_{i, k}(c) \neq 0$ if and only if $a_{i} = 0$.
            In particular, we have 
            \begin{align*}
                \E[\mathbf{1}\set{v_j \notin B_i}M_{i, j}(c)M_{i, k}(c) \mid p_{i-1}(\cdot)] = (1-p_{i-1}(v_i)) \leq 1,
            \end{align*}
            as desired.
        \end{itemize} 
        The above covers all cases and completes the proof.
    \end{claimproof}

    Noting that $Q_0(v_j,v_k) = \mathbf{1}\{v_jv_k \in E(G)\}\alpha_{v_j}\alpha_{v_k}$ and that $\alpha_{v_j} \leq \alpha_{v_k}$ as $\phi(v_j) \leq \phi(v_k)$, applying Claim~\ref{claim: hi} recursively completes the proof for $\E[Q_i(v_j,v_k)]$.

    Next, let us turn our attention to $H_i(v_k)$.
    Note the following:
    \[\E[H_i(v_k)] = -\E[p_i(v_k)\log p_i(v_k)].\]
    The following claim will be key to the argument:

    \begin{claim}\label{claim: qi}
        For $i = 1, \ldots, k-1$, we have
        \begin{align*}
            \E[p_i(v_k)\log p_i(v_k) \mid p_{i-1}(\cdot)] &\leq p_{i-1}(v_k)\log p_{i-1}(v_k) \\
            &\qquad  + \log (2r) \mathbf1\set{v_iv_k \in E(G),\, v_i \notin B_{i-1}}p_{i-1}(v_i) p_{i-1}(v_k).
        \end{align*}
    \end{claim}
    
    \begin{claimproof}
        The claim is trivial if $v_i \notin N_L(v_k)$ or $\set{v_i, v_k} \cap B_{i-1} \neq \emptyset$, so we may assume that $v_i \in N_L(v_k)$ and $\set{v_i, v_k} \cap B_{i-1} = \emptyset$.
        As in the proof of Claim~\ref{claim: pi}, we split into two cases.
        \begin{itemize}[itemsep=5pt]
            \item \textbf{Case 1:} $2rp_{i-1}(v_k) \leq 1$.
            We have
            \begin{align*}
                \E[p_i(v_k) \log p_i(v_k)\mid p_{i-1}(\cdot)] &= (1 - p_{i-1}(v_i))p_{i-1}(v_k)\log p_{i-1}(v_k) \\
                &\qquad \qquad + p_{i-1}(v_i)\cdot\frac{1}{2}\cdot\frac{1}{r}\cdot(2rp_{i-1}(v_k) \log (2rp_{i-1}(v_k))) \\
                &= p_{i-1}(v_k)\log p_{i-1}(v_k) + \log(2r)p_{i-1}(v_i) p_{i-1}(v_k).
            \end{align*}
    
            \item \textbf{Case 2:} $2rp_{i-1}(v_k) > 1$.
            We have
            \begin{align*}
                \E[p_i(v_k)\log p_i(v_k)\mid p_{i-1}(\cdot)] &= (1 - p_{i-1}(v_i))p_{i-1}(v_k)\log p_{i-1}(v_k) \\
                &= p_{i-1}(v_k)\log p_{i-1}(v_k)  + p_{i-1}(v_i)p_{i-1}(v_k) \log \left(\frac{1}{p_{i-1}(v_k)}\right) \\
                &< p_{i-1}(v_k)\log p_{i-1}(v_k) + \log(2r)p_{i-1}(v_i) p_{i-1}(v_k).
            \end{align*}
        \end{itemize}
        The above covers all cases and completes the proof.
    \end{claimproof}

    It follows from Claim~\ref{claim: qi} that
    \[\E[H_i(v_k) \mid p_{i-1}(\cdot)] \geq H_{i-1}(v_k) - \log (2r) Q_{i-1}(v_i, v_k).\]
    Noting that $H_0(v_k) = \alpha_{v_k} \log (1/\alpha_{v_k})$, $Q_0(v_j, v_k) \leq \mathbf{1}\set{v_jv_k \in E(G)}\alpha_{v_k}^2$, and $|N_L(v_k)| \leq d^*(v_k)$, applying the above recursively in conjunction with the expectation bound on $Q_{i-1}(v_i, v_k)$ proved earlier completes the proof.
\end{proof}

With the above in hand, let us show that $\Pr[v_k \in B_{k-1}]$ is small.
    We have the following:
    \begin{align*}
        -H_{k-1}(v_k) &= p_{k-1}(v_k) \log p_{k-1}(v_k) \\
        &= \left(p_{k-1}(v_k) \log p_{k-1}(v_k) - p_{k-1}(v_k) \log \alpha_{v_k} +  p_{k-1}(v_k) \log \alpha_{v_k}\right) \\
        &= p_{k-1}(v_k)\log \alpha_{v_k} + p_{k-1}(v_k) \log (p_{k-1}(v_k)/\alpha_{v_k})
    \end{align*}
    Note that if $p_{k-1}(v_k) > 0$, then $p_{k-1}(v_k) \geq \alpha_{v_k}$ since a weight either increases, remains the same, or is set to $0$ during each iteration.
    This implies that all $p_{k-1}(v_k)\log (p_{k-1}(v_k)/\alpha_{v_k}) \geq 0$.
    It follows that
    \begin{align*}
        -\E[H_{k-1}(v_k)] &= \E[p_{k-1}(v_k)\log \alpha_{v_k} + p_{k-1}(v_k) \log (p_{k-1}(v_k)/\alpha_{v_k})] \\
        &= \E[p_{k-1}(v_k)]\log \alpha_{v_k} + \E[p_{k-1}(v_k) \log (p_{k-1}(v_k)/\alpha_{v_k})] \\
        &\geq \alpha_{v_k}\log \alpha_{v_k} + \log(1/ \alpha_{v_k})\Pr[v_k \in B_{k-1}],
    \end{align*}
    where we use Lemma~\ref{lemma: local r colorable martingale}.
    Applying Lemma~\ref{Lemma: expectations}, we obtain
    \begin{align}\label{eq: B bound local r colorable}
       \log(1 / \alpha_{v_k})\Pr[v_k \in B_{k-1}] \leq d^*(v_k)\log(2r)\alpha_{v_k}^2.
    \end{align}
    We next argue that
    \begin{align}\label{eq: r colorable}
        \frac{d^*(v_k)\log(2r)\alpha_{v_k}}{\log (1 / \alpha_{v_k})} \leq \frac{1}{2}.
    \end{align}
    Noting that the function $x/\log (1/x)$ is increasing on $[0, 1]$ and
    \[\alpha_{v_k} = 4\phi(v_k) \leq \frac{1}{(2 + \eps/4)}\frac{\log d^*(v_k)}{d^*(v_k)\log(2r)},\]
    the bound in \eqref{eq: r colorable} follows for $\delta$ sufficiently large by replacing $\alpha_{v_k}$ with the expression above, yielding the desired bound.
    In particular, we have $\Pr[v_k \in B_{k-1}] \leq \alpha_{v_k}/2$.

Combining \eqref{eq: r colorable} with Lemma~\ref{Lemma: expectations} and the above computations, we have
\[\Pr[v_k \in I] = \frac{1}{2}\left(\E[p_{k-1}(v_k)] - \Pr[v_k \in B_{k-1}]\right) \geq \frac{\alpha_{v_k}}{4} = \phi(v),\]
completing the proof of Proposition~\ref{prop: r colorable}.

\section{High-girth hypergraphs: proofs of Theorems~\ref{theo: hypergraph free ordinary coloring} and~\ref{theo: hypergraph local demands}}\label{section: hypergraph}

In this section, we will prove our results pertaining to hypergraphs with girth at least $4$.
We begin by stating a key result, which shows that such a hypergraph admits a $\phi$-coloring with respect to an appropriate demand function $\phi$.

\begin{proposition}\label{prop: hypergraph}
    For all $\eps > 0$ and integer $r \geq 2$ there exists $\delta_0 \in \N$ such that the following holds for $\delta \geq \delta_0$ and $n \in \N$.
    Let $H$ be an $n$-vertex $r$-uniform hypergraph with girth at least $4$ and let $\phi$ be a demand function for $H$.
    If there exists an ordering $(v_1, \ldots, v_n)$ of the vertices such that $\phi(v_i) \leq \phi(v_j)$ whenever $i < j$ and
    \[\phi(v) \leq (1-\eps)\left(\frac{r-1}{r}\right)\left(\dfrac{\log d^*(v)}{r(r-1)^2d^*(v)}\right)^{\frac{1}{r-1}}, \qquad \text{for each } v \in V(H),\]
    where $d^*(v) \coloneqq \max\{d_L(v)/(r-1),\,\delta\}$.
    Then $H$ admits a $\phi$-coloring.
    
    Moreover, there is a $\poly(n)$-time algorithm that samples an independent set $I \subseteq V(H)$ such that
    \[\Pr[v\in I] \geq \phi(v), \qquad \text{for each } v \in V(H).\]
\end{proposition}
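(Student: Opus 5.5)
We will run the procedure sketched in the overview for high-girth hypergraphs, along the given ordering $(v_1,\ldots,v_n)$. Each vertex starts with weight $p_0(v)=\alpha_v \coloneqq \frac{r}{r-1}\phi(v)/(1-\eps/2)$. When $v_i$ is processed, it enters $I$ with probability $p_{i-1}(v_i)$, unless it is bad. Each right-neighbor $v_k$ of $v_i$ is then updated via rule~\eqref{eq: update intro}, applied to the unique edge $e\ni\{v_i,v_k\}$ in which $v_k$ is right-most; such an edge is unique because girth $\ge 4$ forces linearity.

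Two thresholds are imposed, each handled by an equalizing coin flip so that $\E[p_i(v_k)\mid p_{i-1}(\cdot)]=p_{i-1}(v_k)$ holds exactly:
\begin{itemize}
  \item an upper threshold $\hat p=\alpha_{v_k}^{\eps/10}$; vertices reaching it are declared bad and never enter $I$;
  \item a lower threshold $\check p=\alpha_{v_k}^{1+\eps/10}$; a weight that would fall below it is rounded either to $0$ or to $\check p$.
\end{itemize}

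\textbf{Step 1 (martingale).} Exactly as in Lemma~\ref{lemma: K3 martingale}, a case check on the three branches of~\eqref{eq: update intro} shows that $p_i(v_k)$ is a martingale. The middle branch is weighted by $1-\prod p$ and the last by $p_{i-1}(v_i)$. Together these telescope correctly, and the coin flips handle the truncated cases. Hence $\Pr[v_k\in I]=\alpha_{v_k}-\hat p\Pr[v_k\in B_{k-1}]$, up to the correction from losses at the lower threshold.

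\textbf{Step 2 (energy).} For each edge $e$ with right-most vertex $v_k$, define the energy
\[Q_i(e)=\prod_{u\in e\setminus\{v_k\}}p_i(u)\cdot p_i(v_k),\]
restricted to the vertices of $e\setminus\{v_k\}$ not yet processed. The aim is to show that $\E Q_i(e)\le \alpha_{v_k}^{r}$.

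Because the girth is at least $4$, two vertices of $e$ never share another edge. In addition, no third vertex $v_i$ lies in edges meeting two distinct vertices of $e$ through different edges, since that would create a $3$-cycle. So at each step at most one factor of $Q_i(e)$ moves nontrivially, or else the move comes from $e$ itself, where the rule is built to be multiplicative. This makes $Q_i(e)$ a (super)martingale, just as in Claim~\ref{claim: hi k3}. The ordering by demand gives $\alpha_u\le\alpha_{v_k}$.

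\textbf{Step 3 (entropy).} We aim for a bound of the form
\[\E[p_i\log p_i\mid\cdot]\le p_{i-1}\log p_{i-1}+(1+o(1))\,p_{i-1}(v_k)\,\rho_i .\]
Here $\rho_i$ is the conditional kill-probability increment of the current step. Summed over the steps inside a single edge $e$, this should telescope to roughly $p(v_k)\log(1/q_e)\approx \prod_{u\in e\setminus v_k}p(u)\,p(v_k)$. This is the step I expect to be the main obstacle. Unlike the $K_3$ case, $p_i(v_k)$ can decrease (the third branch), so the per-step entropy increments must be handled by convexity of $x\log x$. The factors $\log\frac{1-\prod_{f\cup\{v_i\}}}{1-\prod_f}$ can be negative, and we need them to sum along $e$ to about $\prod p$ in expectation. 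I would first try to write the increment via a first-order expansion of $\log(1-x)$ for $x\le\hat p^{\,r-1}$, and then use the product martingale from Step~2 to control cross terms.

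Summing over the at most $d_L(v_k)/(r-1)\le d^*(v_k)$ edges gives
\[\E H_{k-1}(v_k)\ge \alpha_{v_k}\log(1/\alpha_{v_k})-(1+\eps/10)\,d^*(v_k)\,\alpha_{v_k}^{r}.\]

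\textbf{Step 4 (bad and dead vertices).} We decompose $p\log p=p\log\check p+p\log(p/\check p)$, which is valid because every nonzero weight is at least $\check p$. Combined with Step~3, this gives
\[\hat p\log(\hat p/\check p)\Pr[\text{bad}]\lesssim d^*\alpha^{r}+\eps\,\alpha\log(1/\alpha)/10 .\]
With $\alpha^{r-1}\approx(1-\eps)^{r-1}\frac{\log d^*}{r(r-1)^2 d^*}\cdot\frac{r^{r-1}}{(r-1)^{r-1}}$, the right-hand side is at most about $\alpha/r$, using $\log(1/\alpha)\approx\frac{1}{r-1}\log d^*$. Hence $\Pr[v_k\in I]\ge(1-1/r)\alpha_{v_k}(1-\eps/2)\ge\phi(v_k)$ for $\delta$ large.

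Separately, the mass lost at the lower threshold must be shown to be $o(\alpha)$. This again uses the entropy bound, because descending to $\check p$ requires many shrinking steps.

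Finally, the running time is clearly polynomial.
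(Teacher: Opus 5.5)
Your architecture matches the paper's. You use the same update rule, equalizing coin flips at thresholds $\hat p=\alpha_{v_k}^{\gamma}$ and $\check p=\alpha_{v_k}^{1+\gamma}$ (your $\gamma=\eps/10$), the martingale $\E[p_i(v_k)]=\alpha_{v_k}$, an energy supermartingale, and the split $p\log p=p\log\check p+p\log(p/\check p)$. However, the entropy estimate of Step~3, which is the heart of the argument, is never established. You are also aiming at a stronger bound than the statement needs: the constant $r(r-1)^2$ is calibrated for a crude per-step bound that requires no cancellation along an edge. Write $p=p_{i-1}(v_k)$, $X=X_{i,k}$, $X'=X/p_{i-1}(v_i)$, and assume all earlier vertices $f$ of $e$ lie in $I$. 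The unthresholded branch then gives
\[\E[p_i(v_k)\log p_i(v_k)\mid\cdot\,]=p\log p+p\log\frac{1}{1-X}+p_{i-1}(v_i)\,p\,\frac{1-X'}{1-X}\log(1-X').\]
The last term is nonpositive and can simply be discarded, so the negative factors you worry about only help. Also $\log\frac{1}{1-X}\le(1+\gamma)X$, because $X\le p_{i-1}(v_i)$ is small. In the coin-flip branches every nonzero outcome is at most $p/(1-X)$ and the mean is $p$, so the conditional expectation is at most $p\log\frac{p}{1-X}$. Thus each left-neighbor $v_i$ of $v_k$ costs at most $(1+\gamma)\mathbf{1}\{f\subseteq I\}X_{i,k}\,p_{i-1}(v_k)$, whose expectation is at most $\alpha_{v_k}^r$. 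Summing over all $d_L(v_k)\le(r-1)d^*(v_k)$ left-neighbors (not over edges) gives an entropy loss of $(1+\gamma)(r-1)d^*(v_k)\alpha_{v_k}^r$. Since $\alpha^{r-1}\approx\frac{\log d^*}{r(r-1)^2d^*}$ and $\log(1/\alpha)\approx\frac{\log d^*}{r-1}$, this is exactly the $\alpha/r$ loss you need in Step~4. Your per-edge target $d^*(v_k)\alpha_{v_k}^r$ would require the first-order terms to cancel over the $r-1$ steps of $e$. They do cancel in the unthresholded branch, since $p_{i-1}(v_i)X'=X$. In the coin-flip branches, however, only a first-order bound is available, so you would also have to show that little energy sits on those events. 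If proved, this would improve $r(r-1)^2$ to $r(r-1)$; as written, the proof has a hole exactly where the work is.

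Step~2 is also wrong as stated. Your $Q_i(e)$ omits the indicator that the already-processed vertices of $e$ lie in $I$. So when $v_i\in e$ is processed, the factor $p_{i-1}(v_i)$ simply disappears and the expectation grows by a factor $1/p_{i-1}(v_i)$. For example, after the last internal vertex of $e$ is processed, your $Q$ equals $p(v_k)$, whose expectation is $\alpha_{v_k}\gg\alpha_{v_k}^r$. The right energy is $\mathbf{1}\{f\subseteq I,\ v_j,v_k\text{ not bad}\}\prod_{u\in e\setminus f}p_i(u)$, with $f$ the processed vertices of $e$. With this definition, the new indicator $\mathbf{1}\{v_i\in I\}$ has conditional expectation $p_{i-1}(v_i)$, replacing the dropped factor. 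On that event $p(v_k)$ is multiplied by $\frac{1-X'}{1-X}\le 1$, so the energy is a supermartingale. It is also exactly the quantity appearing in the per-step entropy bound above.

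Two smaller points. Your expression for $\alpha^{r-1}$ carries a spurious factor $(r/(r-1))^{r-1}$: the $\frac{r}{r-1}$ in $\alpha_v$ cancels the $\frac{r-1}{r}$ in $\phi$. With that factor, your Step~4 arithmetic would fail for $r\in\{2,3\}$. Second, the lower threshold needs no entropy argument. Freezing and excluding vertices at $\check p$ costs at most $\check p=\alpha^{1+\eps/10}=o(\alpha)$, and not freezing them costs nothing.
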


Before we proceed, we note that the above result implies Theorems~\ref{theo: hypergraph free ordinary coloring} and~\ref{theo: hypergraph local demands}.
Indeed, applying Proposition~\ref{prop: hypergraph} with $\phi(v) = (1-\eps)\left(1 - \frac{1}{r}\right)\left(\frac{\log d}{r(r-1)^2d}\right)^{\frac{1}{r-1}}$ for each $v$, $\delta = d$, and a degeneracy ordering of $H$ yields Theorem~\ref{theo: hypergraph free ordinary coloring}.
Similarly, applying Proposition~\ref{prop: hypergraph} ordering the vertices in increasing order of demands, $\delta = \delta(G)$, and noting that $d^*(v) \leq \deg(v)$ yields Theorem~\ref{theo: hypergraph local demands}.
The remainder of this section is devoted to the proof of Proposition~\ref{prop: hypergraph}.

Let us describe our independent set procedure (see Algorithm~\ref{algorithm: fcp hypergraph}).
The algorithm takes as input an $n$-vertex $r$-uniform hypergraph $H$ with girth at least $4$, a demand function $\phi$ for $H$, an ordering $(v_1, \ldots, v_n)$ of the vertices $V(H)$, and a parameter $\eps > 0$, and outputs an independent set $I$ in $H$.
Before we state the algorithm rigorously, we provide an informal overview.
We initially assign each vertex $v$ a weight $p_0(v) = \alpha_v$.
Iterating through the vertices in a degeneracy ordering of $H$, we add $v_i$ to $I$ with probability $p_{i-1}(v_i)$.
We update the weights $p_i(v_k)$ for each $k > i$ according to the outcome of the event $\set{v_i \in I}$.
More formally, let $v_k$ be a right-neighbor of $v_i$, and let $e$ be the unique edge containing $\set{v_i, v_k}$.
Additionally, let $f \subseteq e$ consist of the vertices of $e$ that appear before $v_i$ in the degeneracy ordering.
We update $p_i(v_k)$ as follows:

\begin{itemize}[itemsep=0.2cm]
    \item if $f \not\subseteq I$, then $p_i(v_k) = p_{i-1}(v_k)$;
    \item if $f \subseteq I$ and $v_i \notin I$, then $p_i(v_k) = \dfrac{p_{i-1}(v_k)}{1 - \prod_{u \in e \setminus (f \cup \{v_k\})}p_{i-1}(u)}$;
    \item if $f \cup \{v_i\} \subseteq I$, then $p_i(v_k) = p_{i-1}(v_k)\left(\dfrac{1 - \prod_{u \in e \setminus (f \cup \{v_i, v_k\})}p_{i-1}(u)}{1 - \prod_{u \in e \setminus (f \cup \{v_k\})}p_{i-1}(u)}\right)$.
\end{itemize}
We refer the reader to \S\ref{subsection: overview} for a motivation to the above update rule.
Finally, we ensure $p_i(v_k) \leq \alpha_{v_k}^{\gamma}$ and $p_i(v_k) \geq \alpha_{v_k}^{1+\gamma}$ for $\gamma = \eps/(1000r)$, and define equalizing coin flips accordingly.

Let us now provide a formal description of the algorithm.

\vspace{10pt}
\begin{breakablealgorithm}
\caption{Independent Set Procedure for High Girth Hypergraphs}\label{algorithm: fcp hypergraph}
\begin{flushleft}
\textbf{Input}: An $n$-vertex $r$-uniform hypergraph $H$ with girth at least $4$, a demand function $\phi$ for $H$, an ordering $(v_1, \ldots, v_n)$ of the vertices $V(H)$, and a parameter $\eps > 0$. \\
\textbf{Output}: An independent set $I$ in $H$.
\end{flushleft}

\begin{enumerate}[itemsep=5pt]
    \item \textbf{Initialize:} Set $I = B_0^\mu = B_0^\ell = \emptyset$.
    For each $v \in V(H)$, set $p_0(v) = \alpha_v$, where $\alpha_v \coloneqq (1+\eps/10)\frac{r}{r-1}\phi(v)$.

    \item \textbf{Iterate:} For $i = 1, \ldots, n$, do the following:
        \begin{enumerate}[itemsep=5pt]
                \item For each $v_k$ such that $v_k \notin N_R(v_i)$, set $p_i(v_k) = p_{i-1}(v_k)$.
                \item \label{step: ai} Let $a_{i} \sim \mathrm{BER}(p_{i-1}(v_i))$. If $a_{i} = 1$, then add $v_i$ to $I$.
                \item For each edge $e \in E(H)$ containing $v_i$ as an internal vertex, do the following:
                \begin{enumerate}[label=\ep{\roman*}, itemsep=5pt]
                    \item Let $f \subseteq e$ consist of the vertices of $e$ that appear before $v_i$, let $v_k$ be the right-most vertex in $e$, and let $X_{i, k} \coloneqq \prod_{u \in e \setminus (f \cup \{v_k\})}p_{i-1}(u)$ and $X_{i, k}' \coloneqq X_{i, k}/p_{i-1}(v_i)$.
                    \item\label{step: not activated hypergraph} If $f \not\subseteq I$, or $v_k \in B_{i-1}^\mu \cup B_{i-1}^\ell$, or $p_{i-1}(v_k) = 0$, set $p_i(v_k) = p_{i-1}(v_k)$.

                    \item\label{step: well behaved} If $p_{i-1}(v_k) \leq (1-X_{i, k})\alpha_{v_k}^{\gamma}$ and either $p_{i-1}(v_k)(1-X_{i, k}') \geq (1-X_{i, k})\alpha_{v_k}^{1+\gamma}$ or $X_{i, k}' = 1$, make the following update:

                    \begin{itemize}[itemsep=5pt]
                        \item If $a_{i} = 1$ and $f \subseteq I$, set $p_i(v_k) = p_{i-1}(v_k)\left(\frac{1 - X_{i, k}'}{1 - X_{i, k}}\right)$.
                        \item If $a_{i} = 0$ and $f \subseteq I$, set $p_i(v_k) = \frac{p_{i-1}(v_k)}{1 - X_{i, k}}$.
                    \end{itemize}

                    \item\label{step: ub violated} If $p_{i-1}(v_k) > (1-X_{i, k})\alpha_{v_k}^{\gamma}$, make the following update:

                    \begin{itemize}[itemsep=5pt]
                        \item If $a_{i} = 1$ and $f \subseteq I$, set $p_i(v_k) = \alpha_{v_k}^{\gamma}$ with probability $\mu_{i, k}\coloneqq\left(\frac{1-p_{i-1}(v_i)}{p_{i-1}(v_i)}\right)\left(\frac{p_{i-1}(v_k)- (1 - X_{i, k})\alpha_{v_k}^{\gamma}}{(1-X_{i, k})\alpha_{v_k}^{\gamma} - (1-X_{i, k}')p_{i-1}(v_k)}\right)$ and $p_{i-1}(v_k)\left(\frac{1 - X_{i, k}'}{1 - X_{i, k}}\right)$ otherwise.
                        \item If $a_{i} = 0$ and $f \subseteq I$, set $p_i(v_k) = \alpha_{v_k}^{\gamma}$.
                    \end{itemize}

                    \item\label{step: lb violated} If $0 < p_{i-1}(v_k)(1-X_{i, k}') < (1-X_{i, k})\alpha_{v_k}^{1+\gamma}$, make the following update:

                    \begin{itemize}[itemsep=5pt]
                        \item If $a_{i} = 1$ and $f \subseteq I$, set $p_i(v_k) = \alpha_{v_k}^{1+\gamma}$.
                        \item If $a_{i} = 0$ and $f \subseteq I$, set $p_i(v_k) = \frac{p_{i-1}(v_k)}{1 - X_{i, k}}$ with probability $\ell_{i, k}\coloneqq \left(\frac{1 - X_{i, k}}{1 - p_{i-1}(v_i)}\right)\left(1 - \alpha_{v_k}^{1+\gamma}\frac{p_{i-1}(v_i)}{p_{i-1}(v_k)}\right)$ and $0$ otherwise.
                    \end{itemize}

                \end{enumerate}
            \item\label{step: bad update hypergraph} 
            Set $B_i^\mu = \set{v_k\,:\, p_i(v_k) = \alpha_{v_k}^{\gamma},\, k > i}$ and $B_i^\ell = \set{v_k\,:\, p_i(v_k) = \alpha_{v_k}^{1+\gamma},\, k > i}$.
            
        \end{enumerate}
\end{enumerate}
\end{breakablealgorithm}
\vspace{10pt}

A few remarks are in order.
First, note that Algorithm~\ref{algorithm: fcp hypergraph} can be implemented in $O(rn^2)$ time satisfying the efficiency requirements and performance guarantees outlined in Proposition~\ref{prop: hypergraph}.
Additionally, given an edge $e = \set{v_{i_1}, \ldots, v_{i_r}}$ such that $i_1 < i_2 < \cdots < i_r$, we have $X_{i_{r-1}, i_r}' = 1$.
In particular, $p_{i_r - 1}(v_{i_r}) = 0$ if $\set{v_{i_l}\,:\, l < r} \subseteq I$.
This ensures $I$ is an independent set in $H$.
Furthermore, we note that the probabilities $\mu_{i, k}$ and $\ell_{i, k}$ are well-defined.
Indeed, $\mu_{i, k} \leq 1$ always, and $\mu_{i, k} > 0$ if and only if $p_{i-1}(v_k) > (1-X_{i, k})\alpha_{v_k}^{\gamma}$.
Similarly, $\ell_{i, k} \geq 0$ always and $\ell_{i, k} < 1$ if and only if $p_{i-1}(v_k)(1-X_{i, k}') < (1-X_{i, k})\alpha_{v_k}^{1+\gamma}$.
Finally, we note that by our choice of lower and upper bounds on $p_i(\cdot)$, we can never have both
\[0 < p_{i-1}(v_k)(1-X_{i, k}') < (1-X_{i, k})\alpha_{v_k}^{1+\gamma}, \qquad  \text{and} \qquad p_{i-1}(v_k) > (1-X_{i, k})\alpha_{v_k}^{\gamma},\]
for $\delta$ sufficiently large.
In particular, Algorithm~\ref{algorithm: fcp hypergraph} covers all cases.
It remains to show that $\Pr[v_k \in I]$ is large for each vertex $v_k$.
To this end, note the following:
\begin{align}\label{eq: hypergraph main bound}
    \Pr[v_k \in I] = \E[p_{k-1}(v_k)\mathbf{1}\set{v_k \notin B_{k-1}^\mu \cup B_{k-1}^{\ell}}] = \E[p_{k-1}(v_k)] - \alpha_{v_k}^{\gamma} \Pr[v_k \in B_{k-1}^\mu] - \alpha_{v_k}^{1 + \gamma}\Pr[v_k \in B_{k-1}^{\ell}].
\end{align}

To assist with the proofs, we let $\Omega_i$ denote the outcomes of the random choices at $v_i$, i.e., the activations and (possible) equalizing coin flips that occur when processing $v_i$.
Let us begin by showing that the random variable $p_i(v_k)$ is a martingale in $i$, which in particular implies $\E[p_{k-1}(v_k)] = \alpha_{v_k}$.

\begin{lemma}\label{lemma: expectation potential}
    For $0 \leq i < k \leq n$, we have $\E[p_i(v_k)] = \alpha_{v_k}$.
\end{lemma}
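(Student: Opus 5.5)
As with Lemmas~\ref{lemma: K3 martingale} and~\ref{lemma: local r colorable martingale}, the plan is to show that $p_i(v_k)$ is a martingale in $i$ for $0 \le i \le k-1$:
\[
\E[p_i(v_k)\mid \Omega_1,\ldots,\Omega_{i-1}] = p_{i-1}(v_k) \qquad \text{for } i = 1,\ldots,k-1 .
\]
Since $p_0(v_k)=\alpha_{v_k}$, the lemma then follows by the tower property.

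\textbf{Trivial cases.} Fix $i<k$ and condition on the history. If $v_i\notin N_L(v_k)$, nothing changes. Otherwise, because $H$ is linear (girth at least $4$), there is a unique edge $e\ni\{v_i,v_k\}$. The weight of $v_k$ can only move when $v_k$ is the right-most vertex of $e$ and $v_i$ is internal; in every other configuration $p_i(v_k)=p_{i-1}(v_k)$. The same holds when $f\not\subseteq I$, when $v_k\in B^\mu_{i-1}\cup B^\ell_{i-1}$, or when $p_{i-1}(v_k)=0$, by Step~\ref{step: not activated hypergraph}. So assume $f\subseteq I$ and write $p=p_{i-1}(v_i)$, $q=p_{i-1}(v_k)$, $X=X_{i,k}$, $X'=X_{i,k}'$, noting $X=pX'$. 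Given the history, $a_i=1$ with probability exactly $p$, and this is the only fresh randomness besides the equalizing coins.

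\textbf{Nontrivial cases.} I then check the three branches in turn.
\begin{itemize}
\item In Step~\ref{step: well behaved}, the conditional expectation is
\[
p\,q\,\frac{1-X'}{1-X}+(1-p)\,\frac{q}{1-X} = q\,\frac{1-pX'}{1-X} = q .
\]
This also covers the case $X'=1$, where the first term vanishes.
\item In Step~\ref{step: ub violated}, the expectation is
\[
(1-p)\,\alpha^{\gamma} + p\Bigl[\mu\,\alpha^{\gamma} + (1-\mu)\,q\,\tfrac{1-X'}{1-X}\Bigr].
\]
Setting this equal to $q$ and using the first case's identity $p\,q\,(1-X') = q(1-X) - (1-p)q$, I would solve for $\mu$. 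Multiplying through by $(1-X)$, the equation becomes
\[
p\,\mu\bigl[(1-X)\alpha^\gamma - (1-X')q\bigr] = (1-p)\bigl[q-(1-X)\alpha^\gamma\bigr],
\]
which is exactly the definition of $\mu_{i,k}$.
\item In Step~\ref{step: lb violated}, the expectation is
\[
p\,\alpha^{1+\gamma} + (1-p)\,\ell\,\frac{q}{1-X}.
\]
Substituting $\ell_{i,k}$ gives $(1-p)\ell q/(1-X) = q - \alpha^{1+\gamma}p$, so the total is $q$.
\end{itemize}

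\textbf{Main obstacle.} The main difficulty is bookkeeping rather than any single computation. I must confirm that the case split is exhaustive and disjoint, which the preceding remark asserts for large $\delta$. I also must confirm that the update at $v_k$ is driven only by the single edge $e$. Because $H$ is linear, $v_k$ receives at most one update in step $i$, so no two edges' updates compound within that step. I also need $\mu,\ell\in[0,1]$, which the paper's remarks supply. With these in place, the algebra in the three branches closes the argument.
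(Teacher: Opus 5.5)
Your proposal is correct and follows the paper's proof: you reduce to the one-step identity $\E[p_i(v_k)\mid \Omega_1,\ldots,\Omega_{i-1}] = p_{i-1}(v_k)$, dispose of the trivial cases via linearity and the no-update step, and check the three branches, where $\mu_{i,k}$ and $\ell_{i,k}$ are exactly the equalizing probabilities. The only difference is that you explicitly solve for $\mu_{i,k}$ using $X_{i,k} = p_{i-1}(v_i)X'_{i,k}$, whereas the paper simply asserts that $\mu_{i,k}$ is defined so that the identity holds.
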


\begin{proof}\stepcounter{ForClaims} \renewcommand{\theForClaims}{\ref{lemma: expectation potential}}
    The following claim shows that $p_i(v_k)$ is a martingale in $i$ for $i = 0, \ldots, k-1$.

    \begin{claim}\label{claim: pi hypergraph}
        $\E[p_i(v_k) \mid \Omega_1, \ldots, \Omega_{i-1}] = p_{i-1}(v_k)$ for $i = 1, \ldots, k-1$.
    \end{claim}
    
    \begin{claimproof}
        If $v_i \notin N_L(v_k)$, the claim is trivial.
        Suppose $v_i \in N_L(v_k)$.
        Let $e$ be the unique edge containing $v_i$ and $v_k$ (the edge must be unique since $H$ is linear).
        Let $f \subseteq e$ consist of the vertices of $e$ that appear before $v_i$ in the degeneracy ordering, and let $X_{i, k}$, $X_{i, k}'$, $\mu_{i, k}$, and $\ell_{i, k}$ be defined as in Algorithm~\ref{algorithm: fcp hypergraph}.
        If $f \not \subseteq I$, $v_k \in B_{i-1}^\mu \cup B_{i-1}^\ell$, or $p_{i-1}(v_k) = 0$, the claim is trivial and so we may assume neither of these events occur.
        We have three cases to consider.
        \begin{itemize}[itemsep=5pt]
            \item \textbf{Case 1:} $p_{i-1}(v_k) \leq (1-X_{i, k})\alpha_{v_k}^{\gamma}$ and either $p_{i-1}(v_k)(1-X_{i, k}') \geq (1-X_{i, k})\alpha_{v_k}^{1+\gamma}$ or $X_{i, k}' = 1$.
            We have
            \begin{align*}
                \E[p_i(v_k) \mid \Omega_1, \ldots, \Omega_{i-1}] &= (1 - p_{i-1}(v_i))\frac{p_{i-1}(v_k)}{1 - X_{i, k}} + p_{i-1}(v_i)p_{i-1}(v_k)\left(\frac{1 - X_{i, k}'}{1 - X_{i, k}}\right) \\
                &= p_{i-1}(v_k),
            \end{align*}
            as desired.
    
            \item \textbf{Case 2:} $p_{i-1}(v_k) > (1-X_{i, k})\alpha_{v_k}^{\gamma}$.
            We have
            \begin{align*}
                &~\E[p_i(v_k) \mid \Omega_1, \ldots, \Omega_{i-1}] \\
                &\qquad = (1 - p_{i-1}(v_i))\alpha_{v_k}^{\gamma} + p_{i-1}(v_i)\left(p_{i-1}(v_k)\left(\frac{1 - X_{i, k}'}{1 - X_{i, k}}\right)(1 - \mu_{i, k}) + \mu_{i, k}\alpha_{v_k}^{\gamma}\right) \\
                &\qquad = p_{i-1}(v_k),
            \end{align*}
            as desired.
            Indeed, we define $\mu_{i, k}$ so that the above equality holds.

            \item \textbf{Case 3:} $0 < p_{i-1}(v_k)(1-X_{i, k}') < (1-X_{i, k})\alpha_{v_k}^{1+\gamma}$.
            We have
            \begin{align*}
                \E[p_i(v_k) \mid \Omega_1, \ldots, \Omega_{i-1}]
                &= (1 - p_{i-1}(v_i))\ell_{i, k}\frac{p_{i-1}(v_k)}{1 - X_{i, k}} + p_{i-1}(v_i)\alpha_{v_k}^{1+\gamma} \\
                &= p_{i-1}(v_k),
            \end{align*}
            as desired.
            Indeed, we define $\ell_{i, k}$ so that the above equality holds.
        \end{itemize}
        The above covers all cases and completes the proof.
    \end{claimproof}

    Noting that $p_0(v_k) = \alpha_{v_k}$, applying Claim~\ref{claim: pi hypergraph} recursively completes the proof.
\end{proof}

As in the proof of Proposition~\ref{prop: r colorable}, we will define certain quantities to assist with our proof.
First, the entropy of a vertex is defined identically as in \S\ref{section: K3} and \S\ref{section: locally colorable}:
\begin{align*}
    H_i(v_k) &= -p_i(v_k)\log p_i(v_k) & &\text{for } 0 \leq i < k \leq n.
\end{align*}
The definition of the energy of a pair of vertices is rather more involved.
Let $0 \leq i < j < k \leq n$ and let $e$ be an edge containing $v_j$ and $v_k$ such that $v_k$ is the rightmost vertex of $e$.
Let $f \subseteq e$ be the vertices $v_\ell \in e$ such that $\ell \leq i$.
We define
\[Q_i(v_j, v_k) = \mathbf{1}\set{f \subseteq I\text{ and } \set{v_j, v_k} \cap B_{i}^\mu\cup B_{i}^\ell = \emptyset}\prod_{u \in e \setminus f}p_i(u).\]

We now compute the expected values of each of the above random variables.
We begin with the energy of a pair of vertices.

\begin{lemma}\label{lemma: expectation energy}
    For $0 \leq i < j < k \leq n$, we have $\E[Q_i(v_j, v_k)] \leq \mathbf{1}\{v_j\in N_L(v_k)\}\alpha_{v_k}^r$.
\end{lemma}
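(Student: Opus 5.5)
The plan is to mirror the proof of Claim~\ref{claim: hi}: show that $Q_i(v_j, v_k)$ is a supermartingale in $i$ for $i = 0, \ldots, j-1$, then compare the starting value with $\alpha_{v_k}^r$. At $i = 0$ we have $f = \emptyset$, so
\[Q_0(v_j,v_k) = \mathbf{1}\{v_j \in N_L(v_k)\}\prod_{u \in e}\alpha_u \leq \mathbf{1}\{v_j\in N_L(v_k)\}\alpha_{v_k}^r.\]
This inequality uses the monotonicity $\phi(u) \leq \phi(v_k)$: every $u \in e$ precedes $v_k$, so $\alpha_u \leq \alpha_{v_k}$. Linearity ensures that $e$ is unique, so $Q_i$ is well defined. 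It therefore suffices to prove
\[\E[Q_i(v_j,v_k)\mid \Omega_1,\ldots,\Omega_{i-1}] \leq Q_{i-1}(v_j,v_k).\]

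For the one-step inequality, condition on $\Omega_1,\ldots,\Omega_{i-1}$ and assume $Q_{i-1} \neq 0$. Otherwise the bound is immediate: the indicator can only turn on again if $f$ or the bad sets change, and $f \not\subseteq I$ and badness are both permanent.

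If $v_i \notin e$, then by girth at least $4$ the vertex $v_i$ is a left-neighbour (via distinct edges) of at most one of the relevant vertices. If $v_i$ were a common left-neighbour of two distinct vertices $u, w \in e \setminus f$ through edges other than $e$, those two edges together with $e$ would form a $3$-cycle (a Berge triangle), which is excluded. So at most one factor $p_i(u)$, $u \in e\setminus f$, is updated, and the bad-set indicators only remove mass. Claim~\ref{claim: pi hypergraph}, or its supermartingale version incorporating the indicator, then gives the bound.

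If $v_i \in e$, then $v_i \in e\setminus f$ since $i > |f|$-indices. Here $v_i$ moves from $e\setminus f$ into $f$. The product becomes $\mathbf{1}\{a_i=1\}\prod_{u\in e\setminus(f\cup\{v_i\})}p_i(u)$, with expectation $p_{i-1}(v_i)\cdot\E[\cdots\mid a_i = 1]$. Every other $u \in e\setminus(f\cup\{v_i\})$ with $u \neq v_k$ has $v_i$ as a left-neighbour only through edges other than $e$. By girth $\geq 4$ those updates are independent of, or absent for, the others; in fact for $u \neq v_k$ the edge $e$ does not update $u$, since $u$ is not rightmost. The factor $p_i(v_k)$ is updated through $e$ itself by the rule for $a_i = 1$: it is multiplied by $(1-X'_{i,k})/(1-X_{i,k}) \leq 1$, or sent to $\alpha_{v_k}^\gamma$ or $\alpha_{v_k}^{1+\gamma}$. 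In the latter case $v_k$ becomes bad and the indicator vanishes. Hence on $\{a_i = 1\}$ the product of the $p_i$ is at most the corresponding product of the $p_{i-1}$, and multiplying by $p_{i-1}(v_i)$ recovers $Q_{i-1}$.

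The main obstacle is the case analysis when $v_i \notin e$ but $v_i$ updates some $u \in e\setminus f$ through another edge $e'$, and simultaneously $v_i$ may update a second vertex of $e$. I will use girth $\geq 4$ to rule out $v_i$ touching two vertices of $e$ through different edges. I will also check that in the equalizing cases (Steps~\ref{step: ub violated} and~\ref{step: lb violated}) the indicator $\mathbf{1}\{u\notin B_i^\mu\cup B_i^\ell\}$ only decreases the conditional expectation. The martingale identity from Claim~\ref{claim: pi hypergraph} holds with the bad outcomes included, so zeroing those outcomes yields $\leq$.
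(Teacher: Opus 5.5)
Your proposal is correct and follows essentially the paper's own argument. You prove a one-step supermartingale inequality for $Q_i$, split into two cases: $v_i\notin e$, where girth at least $4$ means at most one vertex of $e\setminus f$ is updated, so Claim~\ref{claim: pi hypergraph} applies and the bad-set indicators only lower the expectation; and $v_i\in e$, where only $v_k$ is updated, either by the factor $(1-X'_{i,k})/(1-X_{i,k})\le 1$ or to a bad value. You then close with the base case $\prod_{u\in e}\alpha_u\le\alpha_{v_k}^r$ from the monotone ordering of demands.

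One small tightening for the case $v_i\in e$: the vertices $u\in e\setminus(f\cup\{v_i,v_k\})$ are not updated at all, since by linearity $e$ is the only edge containing both $v_i$ and $u$, and $u$ is not its rightmost vertex. Your pointwise bound on $\{a_i=1\}$ needs exactly this, and ``independent of'' would not suffice there.
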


\begin{proof}\stepcounter{ForClaims} \renewcommand{\theForClaims}{\ref{lemma: expectation energy}}
    If $v_j \notin N_L(v_k)$, the claim is trivial and so we let $e$ be the unique edge containing $v_j$ and $v_k$ such that $v_k$ is the rightmost vertex of $e$ (the edge is unique since $H$ is linear).
    Let $f \subseteq e$ be the vertices $v_\ell \in e$ such that $\ell \leq i$.
    We have
    \[\E[Q_i(v_j, v_k)] = \E\left[\mathbf{1}\set{f \subseteq I\text{ and } \set{v_j, v_k} \cap B_{i}^\mu\cup B_{i}^\ell = \emptyset}\prod_{u \in e \setminus f}p_i(u)\right].\]
    The following claim will assist with our proof.

    \begin{claim}\label{claim: hi hypergraph}
        \begin{align*}
            &~\E\left[\mathbf{1}\set{f \subseteq I\text{ and } \set{v_j, v_k} \cap B_{i}^\mu\cup B_{i}^\ell = \emptyset}\prod_{u \in e \setminus f}p_i(u)\,\bigg|\, \Omega_1, \ldots, \Omega_{i-1}\right] \\
            & \qquad \leq \mathbf{1}\set{(f\setminus \set{v_i}) \subseteq I \text{ and } \set{v_j, v_k} \cap B_{i-1}^\mu\cup B_{i-1}^\ell = \emptyset}\prod_{u \in e \setminus (f\setminus \set{v_i})}p_{i-1}(u).
        \end{align*}
    \end{claim}

    \begin{claimproof}
        The claim is trivial if $v_i \notin \cup_{u\in e}N_L(u)$ or $\set{v_i, v_j, v_k} \cap B_{i-1}^\mu\cup B_{i-1}^\ell \neq \emptyset$ so we may assume none of these events occur.
        Since $H$ has no $2$- or $3$-cycles, if $v_i\notin e$, then $v_i \in N_L(u)$ for at most one vertex $u \in e$.
        In this case, an identical argument as in the proof of Claim~\ref{claim: pi hypergraph} completes the proof.
        It remains to consider the case that $v_i \in e$ (and hence, in $f$).
        Let $X_{i, k}$, $X_{i, k}'$, $\mu_{i, k}$, and $\ell_{i, k}$ be defined as in Algorithm~\ref{algorithm: fcp hypergraph}.
        If $u \notin I$ for some $u \in f\setminus \set{v_i}$ or $p_{i-1}(v_k) = 0$, the claim is trivial.
        If not, we have three cases to consider.
        \begin{itemize}[itemsep=5pt]
            \item \textbf{Case 1:} $p_{i-1}(v_k) \leq (1-X_{i, k})\alpha_{v_k}^{\gamma}$ and either $p_{i-1}(v_k)(1-X_{i, k}') \geq (1-X_{i, k})\alpha_{v_k}^{1+\gamma}$ or $X_{i, k}' = 1$.
            We have
            \begin{align*}
                &~\E\left[\mathbf{1}\set{f \subseteq I\text{ and } \set{v_j, v_k} \cap B_{i}^\mu\cup B_{i}^\ell = \emptyset}\prod_{u \in e \setminus f}p_i(u)\,\bigg|\, \Omega_1, \ldots, \Omega_{i-1}\right] \\
                &\qquad = \mathbf{1}\set{(f\setminus \set{v_i}) \subseteq I} p_{i-1}(v_i)X_{i, k}p_{i-1}(v_k)\left(\frac{1 - X_{i, k}'}{1 - X_{i, k}}\right) \\
                &\qquad \leq \mathbf{1}\set{\forall v \in f\setminus\set{v_i}}\prod_{u \in e \setminus (f\setminus \set{v_i})}p_{i-1}(u),
            \end{align*}
            where we use the fact that $X_{i, k} \leq X_{i, k}'$.
    
            \item \textbf{Case 2:} $p_{i-1}(v_k) > (1-X_{i, k})\alpha_{v_k}^{\gamma}$.
            We have
            \begin{align*}
                &~\E\left[\mathbf{1}\set{f \subseteq I\text{ and } \set{v_j, v_k} \cap B_{i}^\mu\cup B_{i}^\ell = \emptyset}\prod_{u \in e \setminus f}p_i(u)\,\bigg|\, \Omega_1, \ldots, \Omega_{i-1}\right] \\
                &\qquad = \mathbf{1}\set{(f\setminus \set{v_i}) \subseteq I} p_{i-1}(v_i)X_{i, k}p_{i-1}(v_k)\left(\frac{1 - X_{i, k}'}{1 - X_{i, k}}\right)(1 - \mu_{i, k}) \\
                &\qquad \leq \mathbf{1}\set{(f\setminus \set{v_i}) \subseteq I}\prod_{u \in e \setminus (f\setminus \set{v_i})}p_{i-1}(u),
            \end{align*}
            where we use the fact that $X_{i, k} \leq X_{i, k}'$.

            \item \textbf{Case 3:} $0 < p_{i-1}(v_k)(1-X_{i, k}') < (1-X_{i, k})\alpha_{v_k}^{1+\gamma}$.
            We have
            \begin{align*}
                &~\E\left[\mathbf{1}\set{f \subseteq I\text{ and } \set{v_j, v_k} \cap B_{i}^\mu\cup B_{i}^\ell = \emptyset}\prod_{u \in e \setminus f}p_i(u)\,\bigg|\, \Omega_1, \ldots, \Omega_{i-1}\right]  = 0.
            \end{align*}
        \end{itemize}
        The above covers all cases and completes the proof.
    \end{claimproof}

    Noting that $Q_0(v_j,v_k) = \prod_{u \in e}\alpha_{u} \leq \alpha_{v_k}^r$ as the demands are increasing, applying Claim~\ref{claim: hi hypergraph} recursively completes the proof.
\end{proof}

Next, we consider the entropy at each vertex.

\begin{lemma}\label{Lemma: expectation entropy}
    For $0 \leq i < j < k \leq n$, we have $\E[H_i(v_k)] \geq \alpha_{v_k}\log(1/\alpha_{v_k}) - (1+\gamma)(r-1)d^*(v_k)\alpha_{v_k}^r$.
\end{lemma}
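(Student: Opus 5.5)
We follow the template of Lemma~\ref{lemma: expectations k3} and Lemma~\ref{Lemma: expectations}. The key step is a one-step bound: for $i = 1, \ldots, k-1$,
\[
\E[p_i(v_k)\log p_i(v_k) \mid \Omega_1, \ldots, \Omega_{i-1}] \le p_{i-1}(v_k)\log p_{i-1}(v_k) + (1+\gamma)\,Q_{i-1}(v_i, v_k).
\]
Here the energy is taken with respect to the unique edge $e \ni v_i, v_k$ with $v_k$ rightmost, with $f$ the vertices of $e$ before $v_i$. Given this, the conclusion follows by summing. Taking expectations and telescoping gives $\E[H_i(v_k)] \ge \alpha_{v_k}\log(1/\alpha_{v_k}) - (1+\gamma)\sum_{j \le i}\E[Q_{j-1}(v_j, v_k)]$. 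By Lemma~\ref{lemma: expectation energy}, each summand is at most $\mathbf 1\{v_j \in N_L(v_k)\}\alpha_{v_k}^r$, and there are at most $d_L(v_k) \le (r-1)d^*(v_k)$ such $j$.

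For the one-step bound, if $v_i \notin N_L(v_k)$, $f \not\subseteq I$, $v_k$ is already bad, or $p_{i-1}(v_k)=0$, nothing changes, and the bound holds since $Q \ge 0$. Otherwise write $p = p_{i-1}(v_k)$, $q = p_{i-1}(v_i)$, $X = X_{i,k}$, $X' = X'_{i,k} = X/q$, so that $Q_{i-1}(v_i,v_k) = X p$. We then split into the three cases of Algorithm~\ref{algorithm: fcp hypergraph}.

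\textbf{Case 1 (well behaved).} Since $x \mapsto x\log x$ is convex and the update is a mean-preserving spread, we compute directly. Writing $p_i = p\cdot m$ with $\E[m]=1$, we get $\E[p_i\log p_i] = p\log p + p\,\E[m\log m]$. Here
\[
\E[m\log m] = (1-q)\tfrac{1}{1-X}\log\tfrac{1}{1-X} + q\tfrac{1-X'}{1-X}\log\tfrac{1-X'}{1-X} \le (1-q)\tfrac{1}{1-X}\log\tfrac1{1-X} - q\tfrac{1-X'}{1-X}\log(1-X)= \log\tfrac1{1-X},
\]
using $\log(1-X') \le 0$ and $(1-q) + q(1-X') = 1-X$. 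Since $X \le \alpha^{\gamma(r-1)}$-type quantities are small (each $p_{i-1}(u) \le \alpha_u^\gamma \to 0$ as $\delta\to\infty$), we have $\log(1/(1-X)) \le (1+\gamma)X$.

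\textbf{Case 2 (upper threshold).} We mimic Case 2 of Claim~\ref{claim: qi k3}. The outcomes are $\alpha_{v_k}^\gamma$ or $p\frac{1-X'}{1-X} \le p$. Both values are at most $\alpha_{v_k}^\gamma < p/(1-X)$, and the mean is $p$, so $\E[p_i\log p_i] \le p\log(p/(1-X))$. Indeed, every outcome $y$ satisfies $y\log y \le y\log(p/(1-X))$ when $y \le p/(1-X)$. This again gives the bound $p\log p + (1+\gamma)Xp$.

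\textbf{Case 3 (lower threshold).} The outcomes are $0$, $\alpha_{v_k}^{1+\gamma}$, or $p/(1-X)$, all at most $p/(1-X)$ with mean $p$. The same pointwise inequality $y\log y \le y\log(p/(1-X))$ gives the bound.

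So in all cases the unified principle is as follows: if the new weight has conditional mean $p$ and is supported on $[0, p/(1-X)]$, then $\E[p_i\log p_i] \le p\log p + p\log\frac{1}{1-X}$. Case 1 satisfies this too, since $\frac{1-X'}{1-X} \le \frac1{1-X}$. This makes the argument short.

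The main obstacle is ensuring $X$ is uniformly small, so that $\log\frac1{1-X} \le (1+\gamma)X$ holds. This needs every $p_{i-1}(u)$ with $u \in e\setminus(f\cup\{v_k\})$ to be at most $\alpha_u^\gamma$, with the $\alpha_u$ small. For that we use $\phi(u) \le \phi(v_k)$ and the fact that $\delta$ is large, which makes $\alpha_{v_k}$ small. We also note that $X = 0$ when $e\setminus(f\cup\{v_k\}) = \{v_i\}$ is not the case; there $X = q$ and $X' = 1$, which is still covered.
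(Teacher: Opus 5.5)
Your proposal is correct and follows essentially the same route as the paper. Both arguments prove the one-step bound $\E[p_i(v_k)\log p_i(v_k)\mid\Omega_1,\ldots,\Omega_{i-1}]\le p_{i-1}(v_k)\log p_{i-1}(v_k)+(1+\gamma)Q_{i-1}(v_i,v_k)$ over the three threshold cases, then telescope using Lemma~\ref{lemma: expectation energy} and $d_L(v_k)\le(r-1)d^*(v_k)$. Your observation that the pointwise bound $y\log y\le y\log\frac{p}{1-X}$ (all outcomes in $[0,p/(1-X)]$, mean $p$) also handles Case 1 is a mild streamlining of the paper's direct computation there.

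Two small points of bookkeeping should be added. First, list the trivial case $v_i\in B_{i-1}^\mu\cup B_{i-1}^\ell$: bad vertices are frozen, and there $Q_{i-1}(v_i,v_k)=0$; the paper includes this case explicitly. Second, in Case 3 state the lower-threshold fact $p\ge\alpha_{v_k}^{1+\gamma}$, which is what gives $\alpha_{v_k}^{1+\gamma}\le p/(1-X)$.
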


\begin{proof}\stepcounter{ForClaims} \renewcommand{\theForClaims}{\ref{Lemma: expectation entropy}}
    Note the following:
    \[\E[H_i(v_k)] = -\E[p_i(v_k)\log p_i(v_k)].\]
    The following claim will be key to the argument:

    \begin{claim}\label{claim: qi hypergraph}
        For $i = 1, \ldots, k-1$, we have the following:
        \begin{enumerate}
            \item If $v_i \notin N_L(v_k)$ or $\set{v_i, v_k} \cap B_{i-1}^\mu\cup B_{i-1}^\ell \neq \emptyset$, then 
            \[\E[p_i(v_k)\log p_i(v_k) \mid \Omega_1, \ldots, \Omega_{i-1}] = p_{i-1}(v_k)\log p_{i-1}(v_k).\]
            \item Else if there exists an edge $e \ni \set{v_i, v_k}$ such that $v_i \in N_L(v_k)$, then
            \begin{align*}
                &~\E[p_i(v_k)\log p_i(v_k) \mid \Omega_1, \ldots, \Omega_{i-1}] \\
                &\qquad \leq p_{i-1}(v_k)\log p_{i-1}(v_k) + (1+\gamma)\mathbf1\{f \subseteq I\}X_{i, k}p_{i-1}(v_k),
            \end{align*}
            where $X_{i, k}$ is as defined in Algorithm~\ref{algorithm: fcp hypergraph} and $f \subseteq e$ consists of the vertices in $e$ appearing before $v_i$ in the degeneracy ordering.
        \end{enumerate}
            
    \end{claim}
    
    \begin{claimproof}
        The claim is trivial in the former case and so we may assume that $v_i \in N_L(v_k)$ and $\set{v_i, v_k} \cap B_{i-1}^\mu\cup B_{i-1}^\ell = \emptyset$.
        Let $e$ be the unique edge containing $v_i$ and $v_k$, let $f \subseteq e$ be the vertices $v_j \in e$ such that $j < i$, and let $X_{i, k}$, $X_{i, k}'$, $\mu_{i, k}$, and $\ell_{i, k}$ be defined as in Algorithm~\ref{algorithm: fcp hypergraph}.
        If $u \notin I$ for some $u \in f$ or $p_{i-1}(v_k) = 0$, the claim is trivial and so we may assume neither of these events occur.
        As in the proof of Claim~\ref{claim: pi hypergraph}, we split into three cases.
        \begin{itemize}[itemsep=5pt]
            \item \textbf{Case 1:} $p_{i-1}(v_k) \leq (1-X_{i, k})\alpha_{v_k}^{\gamma}$ and either $p_{i-1}(v_k)(1-X_{i, k}') \geq (1-X_{i, k})\alpha_{v_k}^{1+\gamma}$ or $X_{i, k}' = 1$.
            We have
            \begin{align*}
                &~\E[p_i(v_k) \log p_i(v_k)\mid \Omega_1, \ldots, \Omega_{i-1}] \\
                &\qquad = (1 - p_{i-1}(v_i))\frac{p_{i-1}(v_k)}{1 - X_{i, k}}\log \left(\frac{p_{i-1}(v_k)}{1 - X_{i, k}}\right) \\
                &\qquad \qquad +  p_{i-1}(v_i)p_{i-1}(v_k)\left(\frac{1 - X_{i, k}'}{1 - X_{i, k}}\right)\log \left(p_{i-1}(v_k)\left(\frac{1 - X_{i, k}'}{1 - X_{i, k}}\right)\right) \\
                &\qquad = p_{i-1}(v_k)\log p_{i-1}(v_k) + p_{i-1}(v_k) \log \left(\frac{1}{1 - X_{i, k}}\right) \\
                &\qquad \qquad + p_{i-1}(v_i)p_{i-1}(v_k)\left(\frac{1 - X_{i, k}'}{1 - X_{i, k}}\right)\log \left(1 - X_{i, k}'\right) \\
                &\qquad \leq p_{i-1}(v_k)\log p_{i-1}(v_k) + (1+\gamma)X_{i, k}p_{i-1}(v_k),
            \end{align*}
            where we use the fact that $\log (1/(1-x)) < (1+\gamma)x$ for $x$ sufficiently small and $x\log x \leq 0$ for $x \in [0, 1]$.
    
            \item \textbf{Case 2:} $p_{i-1}(v_k) > (1-X_{i, k})\alpha_{v_k}^{\gamma}$.
            We have
            \begin{align*}
                &~\E[p_i(v_k)\log p_i(v_k)\mid p_{i-1}(\cdot)] \\
                &\qquad = (1 - p_{i-1}(v_i))\alpha_{v_k}^{\gamma} \log \left(\alpha_{v_k}^{\gamma}\right) + p_{i-1}(v_i)\mu_{i, k}\alpha_{v_k}^{\gamma}\log \left(\alpha_{v_k}^{\gamma}\right) \\
                &\qquad \qquad \qquad + p_{i-1}(v_i)(1-\mu_{i, k})p_{i-1}(v_k)\left(\frac{1 - X_{i, k}'}{1 - X_{i, k}}\right)\log \left(p_{i-1}(v_k)\left(\frac{1 - X_{i, k}'}{1 - X_{i, k}}\right)\right) \\
                &\qquad \leq (1 - p_{i-1}(v_i))\alpha_{v_k}^{\gamma} \log \left(\alpha_{v_k}^{\gamma}\right) + p_{i-1}(v_i)\mu_{i, k}\alpha_{v_k}^{\gamma} \log \left(\alpha_{v_k}^{\gamma}\right) \\
                &\qquad \qquad \qquad + p_{i-1}(v_i)(1-\mu_{i, k})p_{i-1}(v_k)\left(\frac{1 - X_{i, k}'}{1 - X_{i, k}}\right)\log \left(\alpha_{v_k}^{\gamma}\right),
            \end{align*}
            since $p_{i-1}(v_k)\left(\frac{1 - X_{i, k}'}{1 - X_{i, k}}\right) \leq p_{i-1}(v_k) \leq \alpha_{v_k}^{\gamma}$.
            Noting that
            \[(1 - p_{i-1}(v_i))\alpha_{v_k}^{\gamma}  + p_{i-1}(v_i)\mu_{i, k}\alpha_{v_k}^{\gamma} + p_{i-1}(v_i)(1-\mu_{i, k})p_{i-1}(v_k)\left(\frac{1 - X_{i, k}'}{1 - X_{i, k}}\right) = p_{i-1}(v_k),\]
            the above simplifies to
            \begin{align*}
                \E[p_i(v_k)\log p_i(v_k)\mid p_{i-1}(\cdot)] &\leq p_{i-1}(v_k)\log\left(\alpha_{v_k}^{\gamma}\right) \\
                &\leq p_{i-1}(v_k)\log\left(\frac{p_{i-1}(v_k)}{1 - X_{i, k}}\right) \\
                &\leq p_{i-1}(v_k)\log p_{i-1}(v_k) + (1+\gamma)X_{i, k}p_{i-1}(v_k),
            \end{align*}
            as desired.

            \item \textbf{Case 3:} $0 < p_{i-1}(v_k)(1-X_{i, k}') < (1-X_{i, k})\alpha_{v_k}^{1+\gamma}$.
            We have
            \begin{align*}
                &~\E[p_i(v_k)\log p_i(v_k)\mid p_{i-1}(\cdot)] \\
                &\qquad = (1 - p_{i-1}(v_i))\ell_{i, k}\frac{p_{i-1}(v_k)}{1 - X_{i, k}}\log\left(\frac{p_{i-1}(v_k)}{1 - X_{i, k}}\right) + p_{i-1}(v_i)\alpha_{v_k}^{1+\gamma}\log \left(\alpha_{v_k}^{1+\gamma}\right) \\
                &\qquad < \log\left(\frac{p_{i-1}(v_k)}{1 - X_{i, k}}\right)\left((1 - p_{i-1}(v_i))\ell_{i, k}\frac{p_{i-1}(v_k)}{1 - X_{i, k}} + p_{i-1}(v_i)\alpha_{v_k}^{1+\gamma}\right) \\
                &\qquad = p_{i-1}(v_k)\log\left(\frac{p_{i-1}(v_k)}{1 - X_{i, k}}\right) \\
                &\qquad \leq p_{i-1}(v_k)\log p_{i-1}(v_k) + (1+\gamma)X_{i, k}p_{i-1}(v_k),
            \end{align*}
            as desired.
        \end{itemize}
        The above covers all cases and completes the proof.
    \end{claimproof}

    It follows from Claim~\ref{claim: qi hypergraph} that
    \[\E[H_i(v_k) \mid \Omega_1, \ldots, \Omega_{i-1}] \geq H_{i-1}(v_k) - (1+\gamma)Q_{i-1}(v_i, v_k).\]
    Noting that $H_0(v_k) = \alpha_{v_k} \log (1/\alpha_{v_k})$ and $d_L(v_k) \leq (r-1)d^*(v_k)$, applying the above recursively along with Lemma~\ref{lemma: expectation energy} completes the proof.
\end{proof}

With the above in hand, let us show that $\alpha_{v_k}^{\gamma}\Pr[v_k \in B_{k-1}^\mu]$ is small.

\begin{lemma}\label{Lemma: bad small hypergraph}
    $\alpha_{v_k}^{\gamma}\Pr[v_k \in B_{k-1}^\mu] \leq \alpha_{v_k}\left(\frac{1}{r} + \gamma\right)$.
\end{lemma}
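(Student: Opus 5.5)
The plan is to mirror the entropy computation that closed \S\ref{section: K3} and \S\ref{section: locally colorable}. The new feature is that weights may now \emph{decrease}, though never below $\alpha_{v_k}^{1+\gamma}$ while positive. Write $p = p_{k-1}(v_k)$ and $\alpha = \alpha_{v_k}$. As before,
\[-H_{k-1}(v_k) = p\log\alpha + p\log(p/\alpha).\]
In the earlier sections the second term was nonnegative, since $p \ge \alpha$ whenever $p > 0$. Here I only have $p \ge \alpha^{1+\gamma}$ whenever $p>0$: this follows from the lower-threshold step and the fact that multiplicative decreases are capped there, with vertices in $B^\ell$ frozen at exactly $\alpha^{1+\gamma}$. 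Hence $\log(p/\alpha) \ge -\gamma\log(1/\alpha)$ on $\{p>0\}$.

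I will split $\E[p\log(p/\alpha)]$ according to whether $v_k \in B_{k-1}^\mu$. On $B^\mu_{k-1}$ we have $p = \alpha^\gamma$, which contributes $\alpha^\gamma(1-\gamma)\log(1/\alpha)\Pr[v_k\in B_{k-1}^\mu]$. Off $B^\mu_{k-1}$ the term is at least $-\gamma p\log(1/\alpha)$, and by Lemma~\ref{lemma: expectation potential} its expectation is at least $-\gamma\alpha\log(1/\alpha)$. Together with $\E[p]=\alpha$ this gives
\[-\E[H_{k-1}(v_k)] \ge \alpha\log\alpha + \alpha^\gamma(1-\gamma)\log(1/\alpha)\Pr[v_k\in B^\mu_{k-1}] - \gamma\alpha\log(1/\alpha).\]
Lemma~\ref{Lemma: expectation entropy} (with $i=k-1$) bounds the left side above by $-\alpha\log(1/\alpha) + (1+\gamma)(r-1)d^*(v_k)\alpha^r$. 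Rearranging,
\[\alpha^\gamma\Pr[v_k\in B^\mu_{k-1}] \le \frac{\alpha}{1-\gamma}\left(\gamma + (1+\gamma)\frac{(r-1)d^*(v_k)\alpha^{r-1}}{\log(1/\alpha)}\right).\]

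It then remains to show that $(r-1)d^*\alpha^{r-1}/\log(1/\alpha) \le (1-\eps/2)/r$ for $\delta$ large. For this I use that $x\mapsto x^{r-1}/\log(1/x)$ is increasing on $(0,1)$. This lets me substitute the upper bound
\[\alpha \le (1+\eps/10)(1-\eps)\left(\frac{\log d^*}{r(r-1)^2d^*}\right)^{1/(r-1)},\]
which comes from the hypothesis on $\phi$. For that value, $\log(1/\alpha) = \frac{1}{r-1}\log d^*\,(1-o(1))$ as $d^*\ge\delta\to\infty$. Therefore
\[\frac{(r-1)d^*\alpha^{r-1}}{\log(1/\alpha)} \le \frac{(1+\eps/10)^{r-1}(1-\eps)^{r-1}}{r}(1+o(1)) \le \frac{1-\eps/2}{r}.\]

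Finally I absorb constants. Since $\gamma = \eps/(1000r)$, we get
\[\frac{1}{1-\gamma}\left(\gamma + (1+\gamma)\frac{1-\eps/2}{r}\right) \le \frac{1}{r} + \gamma,\]
because the $\eps/(2r)$ slack dominates every $O(\gamma)$ correction. The step I expect to need the most care is the first one: checking that every positive weight really stays at least $\alpha^{1+\gamma}$. This requires going through the update cases of Algorithm~\ref{algorithm: fcp hypergraph}, in particular Case 2, where the weight is multiplied by $(1-X'_{i,k})/(1-X_{i,k})$ without triggering the lower threshold. There the Case~1/Case~3 condition $p_{i-1}(v_k)(1-X'_{i,k}) \ge (1-X_{i,k})\alpha^{1+\gamma}$ ensures that the post-update weight stays above $\alpha^{1+\gamma}$ (when $X'_{i,k}<1$), and the exceptional case $X'_{i,k}=1$ sends the weight to $0$, which is harmless.
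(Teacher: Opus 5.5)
Your proposal is correct and is essentially the paper's argument. It uses the same entropy decomposition, the lower threshold $p_{k-1}(v_k)\ge\alpha_{v_k}^{1+\gamma}$ on $\{p_{k-1}(v_k)>0\}$ (which in Case~2 rests on the remark after Algorithm~\ref{algorithm: fcp hypergraph} that Cases~2 and~3 cannot occur simultaneously for large $\delta$), Lemma~\ref{Lemma: expectation entropy}, and the same asymptotic bound on $(r-1)d^*(v_k)\alpha_{v_k}^{r-1}/\log(1/\alpha_{v_k})$. The only difference is cosmetic: the paper centers the logarithm at $\alpha_{v_k}^{1+\gamma}$ rather than $\alpha_{v_k}$, so the remainder $p_{k-1}(v_k)\log(p_{k-1}(v_k)/\alpha_{v_k}^{1+\gamma})$ is nonnegative everywhere and the coefficient of $\Pr[v_k\in B^\mu_{k-1}]$ is the full $\log(1/\alpha_{v_k})$, which avoids your extra factor $1/(1-\gamma)$.
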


\begin{proof}
    We have the following:
    \begin{align*}
        -H_{k-1}(v_k) &= p_{k-1}(v_k) \log p_{k-1}(v_k) \\
        &= \left(p_{k-1}(v_k) \log p_{k-1}(v_k) - p_{k-1}(v_k) \log \left(\alpha_{v_k}^{1+\gamma}\right) +  p_{k-1}(v_k) \log \left(\alpha_{v_k}^{1+\gamma}\right)\right) \\
        &= p_{k-1}(v_k)\log \left(\alpha_{v_k}^{1+\gamma}\right) + p_{k-1}(v_k) \log \left(p_{k-1}(v_k)/\alpha_{v_k}^{1+\gamma}\right).
    \end{align*}
    Note that if $p_{k-1}(v_k) > 0$, then $p_{k-1}(v_k) \geq \alpha_{v_k}^{1+\gamma}$ by our lower bound threshold.
    This implies that $p_{k-1}(v_k) \log \left(p_{k-1}(v_k)/\alpha_{v_k}^{1+\gamma}\right) \geq 0$.
    It follows that
    \begin{align*}
        -\E[H_{k-1}(v_k)] &= \E\left[p_{k-1}(v_k)\log \left(\alpha_{v_k}^{1+\gamma}\right) + p_{k-1}(v_k) \log \left(p_{k-1}(v_k)/\alpha_{v_k}^{1+\gamma}\right)\right] \\
        &= \E[p_{k-1}(v_k)]\log \left(\alpha_{v_k}^{1+\gamma}\right) + \E\left[p_{k-1}(v_k) \log \left(p_{k-1}(v_k)/\alpha_{v_k}^{1+\gamma}\right)\right] \\
        &\geq \alpha_{v_k}\log \left(\alpha_{v_k}^{1+\gamma}\right) + \alpha_{v_k}^{\gamma} \log\left(\alpha_{v_k}^{\gamma} / \alpha_{v_k}^{1+\gamma}\right)\Pr[v_k \in B_{k-1}^\mu].
    \end{align*}
    By Lemma~\ref{Lemma: expectation entropy}, we have
    \begin{align}\label{eq: B bound hypergraph}
        \alpha_{v_k}^{\gamma} \log\left(1/ \alpha_{v_k}\right)\Pr[v_k \in B_{k-1}^\mu] &\leq (1+ \gamma)(r-1)d^*(v_k)\alpha_{v_k}^r + \alpha_{v_k}\left(\log \left(1/\alpha_{v_k}^{1+\gamma}\right) - \log \left(1/\alpha_{v_k}\right)\right) \nonumber \\
        &= \alpha_{v_k}\left((1+ \gamma)(r-1)d^*(v_k)\alpha_{v_k}^{r-1} + \gamma \log \left(1/\alpha_{v_k}\right)\right).
    \end{align}
    Noting that the function $x/\log \left(1/x\right)$ is increasing on $[0, 1]$ and
    \[\alpha_{v_k} = (1+\eps/10)\frac{r}{r-1}\phi(v_k) \leq (1+\eps/10)(1-\eps)\left(\frac{\log d^*(v_k)}{r(r-1)^2d^*(v_k)}\right)^{\frac{1}{r-1}},\]
    we have
    \begin{align}
        \frac{(1+\gamma)(r-1)d^*(v_k)\alpha_{v_k}^{r-1}}{\log \left(1/\alpha_{v_k}\right)} &\leq \frac{(1+\gamma)(1-\eps/2)^r\log d^*(v_k)}{r(r-1)\log \left(1/\alpha_{v_k}\right)} \leq \frac{1}{r}.
    \end{align}
    Plugging this into \eqref{eq: B bound hypergraph}, we obtain
    \begin{align*}
         \alpha_{v_k}^{\gamma} \Pr[v_k \in B_{k-1}^\mu] &\leq  \frac{\alpha_{v_k}}{\log \left(1/\alpha_{v_k}\right)}\left((r-1)d\alpha_{v_k}^{r-1} + \gamma\log \left(1/\alpha_{v_k}\right)\right) \\
         &\leq  \alpha_{v_k}\left(\frac{1}{r} + \gamma\right),
    \end{align*}
    as desired.
\end{proof}

Combining \eqref{eq: hypergraph main bound} with Lemma~\ref{Lemma: bad small hypergraph}, we have
\begin{align*}
    \Pr[v_k \in I] &= \E[p_{k-1}(v_k)] - \alpha_{v_k}^{\gamma} \Pr[v_k \in B_{k-1}^\mu] - \alpha_{v_k}^{1 + \gamma}\Pr[v_k \in B_{k-1}^{\ell}] \\
    &\geq \alpha_{v_k} - \alpha_{v_k}\left(\frac{1}{r} + \gamma\right) - \alpha_{v_k}^{1 + \gamma} \\
    &\geq \frac{1}{1 + \eps/10}\left(1 - \frac{1}{r}\right)\alpha_{v_k} = \phi(v_k),
\end{align*}
completing the proof of Proposition~\ref{prop: hypergraph}.

\section{Concluding Remarks}\label{section: concluding remarks}

We conclude with several potential avenues for future work.
In this paper, we introduced a new entropy-based framework for fractional colorings of $d$-degenerate graphs and hypergraphs. 
At its core, the central technical novelty lies in the design and analysis of the independent set sampling algorithm: it iteratively processes vertices according to a well-chosen vertex ordering, adding each vertex $v_i$ to the independent set $I$ with probability proportional to its current weight, and updating the parameters of its right-neighbors while carefully tracking their downstream impact on $I$. 
We believe this procedure provides a versatile framework for tackling broader problems on $d$-degenerate hypergraphs.

In follow-up work to~\cite{Joh_triangle}, Johansson showed that $K_r$-free graphs $G$ of maximum degree $\Delta$ have chromatic number $O_r\left(\frac{\Delta \log\log \Delta}{\log \Delta}\right)$~\cite{J96-Kr}, extending a seminal result of Shearer~\cite{shearer1995independence} on the independence number of such graphs (see~\cite{Molloy, bernshteyn2019johansson, dhawan2025bounds} for recent proofs with improved leading constants; see also~\cite{bonamy2022bounding, DKPS} for an asymptotic improvement when $r = \omega\left(\frac{\log \Delta}{\log \log \Delta}\right)$).
Johansson employed an entropy-based argument to prove this result (see~\cite[\S9]{bansal2015lov} for a simplified exposition).
Unfortunately, an analogous bound does not hold in general for $\chi_f(\cdot)$ when replacing maximum degree $\Delta$ with degeneracy $d$.
Indeed, Brada\v{c}, Fox, Steiner, Sudakov, and Zhang recently established the following lower bound:

\begin{theorem}[\cite{bradavc2026coloring}]
    For an integer $r\geq 3$, there exists $d_0\in \mathbb{N}$ such that for all $d\geq d_0$, there exists a $d$-degenerate $K_r$-free graph $G$ satisfying 
    \[ \chi_f(G) = \Omega_r\left(\frac{d}{\log^{(r-2)}d}\right). \]
\end{theorem}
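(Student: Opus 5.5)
This theorem is cited from Brada\v{c}, Fox, Steiner, Sudakov, and Zhang and is not proved in this paper. Here is how I would prove it: an explicit construction, iterated over the clique number, in the spirit of the Descartes/Kostochka--Ne\v{s}et\v{r}il constructions of degenerate triangle-free graphs with large chromatic number. The induction is on $r$. For the base case $r=3$ I must produce a $d$-degenerate triangle-free graph with $\chi_f(G)=\Omega(d/\log d)$, where $\log^{(1)}d=\log d$. A random graph $G(n,p)$ with $p=d/n$ and triangles deleted suffices. Its degeneracy is $O(d)$, and by first-moment bounds $\alpha(G)=O(\frac{n}{d}\log d)$. Hence $\chi_f(G)\ge n/\alpha(G)=\Omega(d/\log d)$.

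For the inductive step, suppose $G_{r-1}$ is $d'$-degenerate, $K_{r-1}$-free, and has $\chi_f(G_{r-1})\ge c\,d'/\log^{(r-3)}d'$. I would build $G_r$ by attaching, to each of many disjoint copies of $G_{r-1}$, new vertices. Each new vertex is adjacent to an independent-looking $d$-set of old vertices, chosen randomly among vertices lying in a sparse structure. The new vertex then has at most $d$ earlier neighbours, so degeneracy is preserved. A new vertex's neighbourhood lies inside a $K_{r-1}$-free graph, and I would arrange that it contains no $K_{r-2}$; this makes $G_r$ $K_r$-free. Such a neighbourhood could be taken independent, for instance, or inside a copy of a $K_{r-2}$-free auxiliary graph.

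For the fractional bound I would argue by LP duality. Given any distribution on independent sets with marginals $\ge q$, the set $I$ restricted to the base copies must hit many $d$-sets in many vertices. Averaging then shows that a typical new vertex sees roughly $dq$ chosen neighbours, so its inclusion probability is about $e^{-\Omega(dq)}$ times the base density. Equating gives $q\lesssim \log(\text{base }\chi_f)/d$. Since the base parameter is $\log^{(r-3)}d$, this yields $q=O(\log^{(r-2)}d/d)$.

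The main obstacle is the \emph{correlation} step. The distribution on independent sets is adversarial, so I cannot assume the old vertices are included independently. I would handle this with a counting/union bound over all independent sets of the copy: if the number of independent sets of the base structure is at most $2^{o(d q)}$ (controlled via its fractional chromatic number and a container-type count), then a random $d$-set avoids every fixed large independent set with the required probability. Balancing the parameters so that the logarithm of the count of base independent sets is exactly the iterated log is where the tower $\log^{(r-2)}$ arises. This is the step I expect to require the most care.
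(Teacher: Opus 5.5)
The paper only cites this theorem and gives no proof, so I am judging your sketch on its own. Your base case is fine, but the inductive step has a fatal flaw. Your new vertices are adjacent only to old vertices; that is how you get at most $d$ earlier neighbours. So they form an independent set $W$ sitting on top of a disjoint union $B$ of copies of $G_{r-1}$, and therefore
$\chi_f(G_r)\le\chi_f(B)+1=\chi_f(G_{r-1})+1$: take an optimal fractional colouring of $B$ and give $W$ weight $1$.

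In primal terms, the distribution ``with probability $q$ output $W$, otherwise sample from a fractional colouring of $B$'' puts every new vertex in $I$ with probability $q$. There is no $e^{-\Omega(dq)}$ penalty at all. Meanwhile the degeneracy is still at least $d'$, so the step can never upgrade a $d'/\log^{(r-3)}d'$ bound to a $d/\log^{(r-2)}d$ bound. Unrolled from $r=3$, everything stays $O(D/\log D)$ in the degeneracy $D$.

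This is exactly the correlation you flag, and your proposed remedy does not address it. A union bound over large independent sets $J$ of the base controls how many $d$-sets avoid a fixed large $J$, but the bad distribution uses $J=\emptyset$. In any case, a union of many copies has exponentially many independent sets in its number of vertices, not $2^{o(dq)}$.

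Two further symptoms point to the same problem.
\begin{itemize}
\item If the new neighbourhoods are independent or $K_{r-2}$-free, then $G_r$ is itself $K_{r-1}$-free. For $r=4$ this would give triangle-free $d$-degenerate graphs with $\chi_f=\Omega(d/\log\log d)$, contradicting Theorem~\ref{theo: martinsson steiner}.
\item Your closing heuristic $q\approx\log(\chi_f(G_{r-1}))/d$ is $\Theta(\log d/d)$, because $\log\chi_f(G_{r-1})\sim\log d$. The iterated logarithm requires a recursion of the form $q_r\approx\frac1d\log(d\,q_{r-1})$ with $q_{r-1}=1/\chi_f(G_{r-1})$. That recursion involves $d/\chi_f(G_{r-1})$, not $\chi_f(G_{r-1})$.
\end{itemize}

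What is missing is a mechanism that (i) blocks an entire copy of $G_{r-1}$ at once and (ii) gives the blocking vertices dual weight themselves. Here is one way to realise the right recursion; I cannot confirm it is the construction of Brada\v{c}, Fox, Steiner, Sudakov and Zhang.
\begin{itemize}
\item Add new \emph{blocks} $X$, each a copy of $G_{r-1}$, and join every vertex of $X$ to a common independent $d$-set $N_X$ taken from earlier blocks.
\item Consider the last block meeting a clique. It contributes at most $r-2$ vertices, and the other clique vertices lie in the independent set $N_X$, so the clique has at most $r-1$ vertices. Hence $G_r$ is $K_r$-free, and its degeneracy is at most $d+d(G_{r-1})$.
\item For an independent $I$, each block is either entirely killed ($I\cap N_X\neq\emptyset$) or contains at most $q_{r-1}$ of that block's dual weight, by the dual witness for $\chi_f(G_{r-1})$.
\item Because the blockers are themselves weighted vertices, the weight $a$ of $I$ satisfies roughly $a\le q_{r-1}(1-a)^d$. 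This gives $a=O(\log(d\,q_{r-1})/d)$, which starting from $q_2=1$ produces $\log d$, then $\log\log d$, and so on.
\item To make $\Pr_X[N_X\cap I=\emptyset]\approx(1-a)^d$ hold for every $I$ without a union bound, use many layers with complete, weighted families of $d$-sets from earlier layers, and spread the dual measure uniformly over layers. The running average of the layer densities then converges to the fixed point of $a=q_{r-1}e^{-da}$.
\end{itemize}
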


Here, $\log^{(b)}(\cdot)$ denotes the $b$-fold iterated logarithm defined as $\log^{(b)}(x) \coloneqq \log(\log^{(b-1)}(x))$ with $\log^{(1)}(x) = \log x$.
They ask whether $d$-degenerate $K_r$-free graphs $G$ satisfy $\chi_f(G) = o_r(d)$ for $r\geq 4$ in general~\cite[Problem~6.4]{bradavc2026coloring}.
It is possible that the techniques introduced in this paper could yield an affirmative answer to their question.

\begin{question}
    Can the entropy-based techniques developed in this paper be used to prove that every $d$-degenerate $K_r$-free graph $G$ satisfies $\chi_f(G) = o_r(d)$ for all $r\geq 4$?
\end{question}

The primary obstacle in adapting Johansson's argument to our setting is its heavy reliance on controlling the neighborhood size $|N_R(v)|$ at each vertex $v$.
Because $|N_R(v)|$ cannot be directly controlled in $d$-degenerate graphs, extending this method is non-trivial.
However, this difficulty disappears in the setting of local demands. 
In upcoming work with Davies, Gallardo, Methuku, Nguyen, Seo, and Spiro~\cite{local_shearer_Kr}, we prove a local fractional coloring variant of Johansson's result by combining the ideas of this paper with those of~\cite{bansal2015lov, J96-Kr}, along with several additional technical innovations.

Martinsson and Steiner conjectured that the constant factor in Theorem~\ref{theo: martinsson steiner} can be improved from $4$ to $1$, and that this bound is optimal. 
In subsequent work with Allen and Noel~\cite{high_girth}, we build upon the framework introduced here to confirm their upper bound conjecture for graphs of girth at least $5$ and settle their lower bound conjecture in full.
Specifically, we show that $d$-degenerate girth-$5$ graphs satisfy $\chi_f(G) \le (1+o_d(1))\frac{d}{\log d}$, while there exist $d$-degenerate $K_3$-free graphs with $\chi_f(G) \ge (1-o_d(1))\frac{d}{\log d}$.
The constant $1$ matches the conjectured computational threshold for finding large independent sets in sparse random graphs.
For $r$-uniform hypergraphs, the analogous threshold constant is $(r-1)^{\frac{1}{r-1}}$ (see~\cite[\S2]{triangle_free} for an informal justification of this barrier).
We conjecture that the corresponding upper bound holds for $r$-uniform hypergraphs with girth at least $4$.

\begin{conjecture}
    Let $H$ be an $r$-uniform $d$-degenerate hypergraph with girth at least $4$.
    Then 
    \[ \chi_f(H) \leq (1+o(1))\left((r-1)\frac{d}{\log d}\right)^{\frac{1}{r-1}}. \]
\end{conjecture}

In forthcoming work with Li, Luo, Methuku, Vo, and Yuan~\cite{uncrowded}, we verify this conjecture for uncrowded hypergraphs (i.e., $r$-uniform hypergraphs with girth at least $5$), generalizing a recent result of Methuku, Vo, and the author~\cite{triangle_free}.
To achieve this, we tailor and simplify Algorithm~\ref{algorithm: fcp hypergraph} specifically for the uncrowded setting, yielding both the predicted upper bound and a version formulated in terms of local demands, resolving a recent conjecture of Yu and Zhang~\cite[Conjecture~4.3]{yu2026independent}.

Following work by Frieze and Mubayi~\cite{frieze2013coloring}, Cooper and Mubayi studied the chromatic number of $3$-uniform triangle-free hypergraphs~\cite{cooper2015list}; here, a triangle in a hypergraph is a collection of three edges $e, f, g$ and three vertices $u, v, w$ such that $u \in e\cap f$, $v \in f\cap g$, $w \in e\cap g$, and $\{u, v, w\}\cap e \cap f \cap g = \emptyset$.
They showed that $3$-uniform triangle-free hypergraphs $H$ of maximum degree $\Delta$ satisfy $\chi(H) = O\left(\sqrt{\frac{\Delta}{\log \Delta}}\right)$.
Li and Postle~\cite{li2022chromatic} extended this result to all uniformities, showing that $r$-uniform triangle-free hypergraphs $H$ of maximum degree $\Delta$ satisfy $\chi(H) \leq c_r\left(\frac{\Delta}{\log \Delta}\right)^{\frac{1}{r-1}}$ for some constant $c_r > 0$.\footnote{They prove a more general result for hypergraphs of rank $r$, where each edge contains at most $r$ vertices.}
We conjecture that the same bound holds for fractional coloring when maximum degree $\Delta$ is replaced by degeneracy $d$.

\begin{conjecture}\label{conj: hypergraph}
    Let $H$ be an $r$-uniform $d$-degenerate triangle-free hypergraph for $r \geq 3$.
    Then 
    \[ \chi_f(H) \leq c_r\left(\frac{d}{\log d}\right)^{\frac{1}{r-1}} \]
    for some constant $c_r > 0$.
\end{conjecture}

The parameter update procedure in our proof of Theorem~\ref{theo: hypergraph free ordinary coloring} relies fundamentally on the property that any pair of vertices $v_i, v_k$ belongs to at most one edge.
Because triangle-free hypergraphs need not be linear, resolving Conjecture~\ref{conj: hypergraph} will likely require substantial new ideas.

\vspace{2mm}
\subsection*{Acknowledgments}

We thank Peter Bradshaw, Minh-Quan Vo, and Jing Yu for helpful discussions, as well as Anton Bernshteyn and Abhishek Methuku for valuable comments on an earlier version of this manuscript. 
We are grateful to Peter Allen and Jonathan A. Noel for a suggestion that streamlined our proofs, and to Ewan Davies for drawing our attention to the local fractional coloring implications of our technique.

\vspace{2mm}
\printbibliography

@inproceedings{bansal2015lov,
  title={On the Lov{\'a}sz theta function for independent sets in sparse graphs},
  author={Bansal, Nikhil and Gupta, Anupam and Guruganesh, Guru},
  booktitle={Proceedings of the forty-seventh annual ACM symposium on Theory of Computing},
  pages={193--200},
  year={2015},
  addendum = {Full version: \url{https://arxiv.org/abs/1504.04767}},
}

@book {MolloyReed,
    AUTHOR = {Molloy, M. and Reed, B.},
     TITLE = {Graph Colouring and the Probabilistic Method},
    SERIES = {Algorithms and Combinatorics},
    VOLUME = {23},
 PUBLISHER = {Springer-Verlag, Berlin},
      YEAR = {2002},
     PAGES = {xiv+326},
      ISBN = {3-540-42139-4},
   MRCLASS = {05-02 (05C15 05C80 60-02 60C05)},
  MRNUMBER = {1869439},
MRREVIEWER = {P.\ Mark\ Kayll},
       DOI = {10.1007/978-3-642-04016-0},
       URL = {https://doi.org/10.1007/978-3-642-04016-0},
}

@article{wood2014hypergraph,
  title={Hypergraph colouring and degeneracy},
  author={Wood, David R},
  journal={AUSTRALASIAN JOURNAL OF COMBINATORICS},
  volume={60},
  number={2},
  pages={142--145},
  year={2014}
}

@incollection{KangKelly2023nibble,
  author    = {Dong Yeap Kang and Tom Kelly and Daniela Kühn and Abhishek Methuku and Deryk Osthus},
  title     = {Graph and hypergraph colouring via nibble methods: A survey},
  booktitle = {Proceedings of the 8th European Congress of Mathematics},
  pages     = {771--823},
  year      = {2023},
  publisher = {EMS Press},
  doi       = {10.4171/8ECM/11}
}

@inproceedings{brooks1941colouring,
  title={On colouring the nodes of a network},
  author={R. L. Brooks},
  booktitle={Mathematical Proceedings of the Cambridge Philosophical Society},
  volume={37},
  number={2},
  pages={194--197},
  year={1941},
  organization={Cambridge University Press}
}

@article{PS15,
    author = {S. Pettie and H.-H. Su},
    title = {Distributed coloring algorithms for triangle-free graphs},
    journaltitle = {Information and Computation},
    date = {2015},
    volume = {243},
    pages = {263--280},
}

@article{anderson2025coloring,
  title={Coloring graphs with forbidden almost bipartite subgraphs},
  author={Anderson, James and Bernshteyn, Anton and Dhawan, Abhishek},
  journal={Random Structures \& Algorithms},
  volume={66},
  number={4},
  pages={e70012},
  year={2025},
  publisher={Wiley Online Library}
}

@article{alon1996independence,
  title={Independence numbers of locally sparse graphs and a {R}amsey type problem},
  author={Alon, Noga},
  fjournal={Random Structures \& Algorithms},
  journal   = {Random Structures Algorithms},
  volume={9},
  number={3},
  pages={271--278},
  year={1996},
  publisher={Wiley Online Library}
}

@article{anderson2024coloring,
  title={Coloring locally sparse graphs},
  author={Anderson, James and Dhawan, Abhishek and Kuchukova, Aiya},
  journal={The Electronic Journal of Combinatorics},
  pages={P1.31},
  year={2026}
}

@article{kostochka1999properties,
  title={Properties of Descartes' construction of triangle-free graphs with high chromatic number},
  author={Kostochka, Alexandr V and Ne{\v{s}}et{\v{r}}il, Jaroslav},
  journal={Combinatorics, Probability and Computing},
  volume={8},
  number={5},
  pages={467--472},
  year={1999},
  publisher={Cambridge University Press}
}

@article{descartes1954solution,
  title={Solution to advanced problem no. 4526},
  author={Descartes, Blanche},
  journal={Amer. Math. Monthly},
  volume={61},
  number={352},
  pages={216},
  year={1954}
}

@article{verstraete2026independent,
  title={Independent sets in hypergraphs},
  author={Verstraete, Jacques and Wilson, Chase},
  journal={Random Structures \& Algorithms},
  volume={68},
  number={1},
  pages={e70047},
  year={2026},
  publisher={Wiley Online Library}
}

@article{cooper2015list,
  title={List coloring triangle-free hypergraphs},
  author={Cooper, Jeff and Mubayi, Dhruv},
  journal={Random Structures \& Algorithms},
  volume={47},
  number={3},
  pages={487--519},
  year={2015},
  publisher={Wiley Online Library}
}

@article{nie2021randomized,
  title={Randomized greedy algorithm for independent sets in regular uniform hypergraphs with large girth},
  author={Nie, Jiaxi and Verstra{\"e}te, Jacques},
  journal={Random Structures \& Algorithms},
  volume={59},
  number={1},
  pages={79--95},
  year={2021},
  publisher={Wiley Online Library}
}

@unpublished{li2022chromatic,
  title={The chromatic number of triangle-free hypergraphs},
  author={L. Li and L. Postle},
howpublished = {\url{https://arxiv.org/abs/2202.02839} (preprint)},
  year={2022}
}

@techreport{caro1979new,
  title={New results on the independence number},
  author={Caro, Yair},
  year={1979},
  institution={Technical Report, Tel-Aviv University}
}

@unpublished{triangle_free,
  title={The independence number of uncrowded hypergraphs: bounds matching the shattering threshold},
  author={Abhishek Dhawan and Abhishek Methuku and Minh-Quan Vo},
howpublished = {\url{https://arxiv.org/abs/2606.18048} (preprint)},
  year={2026}
}

@unpublished{high_girth,
  title={Sharp bounds for the fractional chromatic number of high-girth $d$-degenerate graphs},
  author={Peter Allen and Abhishek Dhawan and Jonathan A. Noel},
howpublished = {\url{https://arxiv.org/abs/2607.26271} (preprint)},
  year={2026}
}

@article{caro1991improved,
  title={Improved lower bounds on k-independence},
  author={Caro, Yair and Tuza, Zsolt},
  journal={Journal of Graph Theory},
  volume={15},
  number={1},
  pages={99--107},
  year={1991},
  publisher={Wiley Online Library}
}

@article{dutta2012new,
  title={New lower bounds for the independence number of sparse graphs and hypergraphs},
  author={Dutta, Kunal and Mubayi, Dhruv and Subramanian, CR},
  journal={SIAM Journal on Discrete Mathematics},
  volume={26},
  number={3},
  pages={1134--1147},
  year={2012},
  publisher={SIAM}
}

@article{spencer1972turan,
  author = {Spencer, Joel},
  title = {{T}ur{\'a}n's theorem for $k$-graphs},
  journal = {Discrete Mathematics},
  volume = {2},
  number = {2},
  pages = {183--186},
  year = {1972},
}

@article{dhawan2025list,
  author = {Dhawan, Abhishek},
  title = {List colorings of $k$-partite $k$-graphs},
  journal = {The Electronic Journal of Combinatorics},
  volume = {32},
  number = {2},
  pages = {P2.16},
  year = {2025},
}

@unpublished{yu2026independent,
  title={On independent sets in uncrowded uniform hypergraphs},
  author={Yu, Jing and Zhang, Junchi},
  howpublished = {\url{https://arxiv.org/abs/2606.18171} (preprint)},
  year={2026}
}

@unpublished{uncrowded,
  title={Random independent sets in uncrowded hypergraphs},
  author={Abhishek Dhawan and Lina Li and Abhishek Methuku and Minh-Quan Vo and Kewen Yuan},
howpublished = {In preparation}
}

@unpublished{local_shearer_Kr,
  title={Fractionally coloring $K_r$-free graphs with local demands},
  author={Ewan Davies and Abhishek Dhawan and David Gallardo and Abhishek Methuku and Huy Nguyen and Jaehyeon Seo and Samuel Spiro},
howpublished = {In preparation}
}

@article{Vizing,
	author = {V.G. Vizing},
	title = {On an estimate of the chromatic class of a $p$-graph},
	journaltitle = {Diskret. Analiz.},
	volume = {3},
	date = {1964},
	pages = {25--30},
}

@unpublished{Kttt,
  title={Independent sets and colorings of $K_{t, t, t}$-free graphs},
  author={Dhawan, Abhishek and Janzer, Oliver and Methuku, Abhishek},
  howpublished = {\url{https://arxiv.org/abs/2511.17191} (preprint)},
  year={2025}
}

@article{kelly2024fractional,
  title={Fractional coloring with local demands and applications to degree-sequence bounds on the independence number},
  author={Kelly, Tom and Postle, Luke},
  journal={Journal of Combinatorial Theory, Series B},
  volume={169},
  pages={298--337},
  year={2024},
  publisher={Elsevier}
}

@article{harris2019some,
  title={Some results on chromatic number as a function of triangle count},
  author={Harris, David G},
  journal={SIAM Journal on Discrete Mathematics},
  volume={33},
  number={1},
  pages={546--563},
  year={2019},
  publisher={SIAM}
}

@article{ajtai1982extremal,
  title={Extremal uncrowded hypergraphs},
  author={Ajtai, Mikl{\'o}s and Koml{\'o}s, J{\'a}nos and Pintz, Janos and Spencer, Joel and Szemer{\'e}di, Endre},
  journal={Journal of Combinatorial Theory, Series A},
  volume={32},
  number={3},
  pages={321--335},
  year={1982},
  publisher={Elsevier}
}

@article{frieze2013coloring,
  title={Coloring simple hypergraphs},
  author={Frieze, Alan and Mubayi, Dhruv},
  journal={Journal of Combinatorial Theory, Series B},
  volume={103},
  number={6},
  pages={767--794},
  year={2013},
  publisher={Elsevier}
}

@article{duke1995uncrowded,
  title={On uncrowded hypergraphs},
  author={Duke, Richard A and Lefmann, Hanno and R{\"o}dl, Vojtech},
  journal={Random Structures \& Algorithms},
  volume={6},
  number={2-3},
  pages={209--212},
  year={1995},
  publisher={Wiley Online Library}
}

@article{iliopoulos2021improved,
  title={Improved Bounds for Coloring Locally Sparse Hypergraphs},
  author={Iliopoulos, Fotis},
  journal={Approximation, Randomization, and Combinatorial Optimization. Algorithms and Techniques},
  year={2021}
}

@unpublished{bradavc2026coloring,
  title={Coloring small locally sparse degenerate graphs and related problems},
  author={Brada{\v{c}}, Domagoj and Fox, Jacob and Steiner, Raphael and Sudakov, Benny and Zhang, Shengtong},
  howpublished= {\url{https://arxiv.org/abs/2601.15245} (preprint)},
  year={2026}
}

@unpublished{dhawan2024palette,
  title={Palette Sparsification for Graphs with Sparse Neighborhoods},
  author={Dhawan, Abhishek},
  howpublished= {\url{https://arxiv.org/abs/2408.08256} (preprint)},
  year={2024}
}

@article{Molloy,
    author = {M. Molloy},
    title = {The list chromatic number of graphs with small clique number},
    journaltitle = {J. Combin. Theory},
    series = {B},
    volume = {134},
    pages = {264--284},
    date = {2019},
}

@article{AKSConjecture,
    AUTHOR = "N. Alon and M. Krivelevich and B. Sudakov",
    TITLE = "{Coloring graphs with sparse neighborhoods}",
    JOURNAL = "J. Combin. Theory",
    series = {B},
    YEAR = "1999",
    volume = {77},
    pages = {73--82},
}

@article{bernshteyn2019johansson,
  title={The Johansson-Molloy theorem for DP-coloring},
  author={A. Bernshteyn},
  journal={Random Structures \& Algorithms},
  volume={54},
  number={4},
  pages={653--664},
  year={2019},
  publisher={Wiley Online Library}
}

@unpublished{DKPS,
    author = {E. Davies and R.J. Kang and F. Pirot and J.-S. Sereni},
    title = {Graph structure via local occupancy},
    howpublished = {\url{https://arxiv.org/abs/2003.14361} (preprint)},
    date = {2020},
}

@report{Joh_triangle,
	author = {A. Johansson},
	title = {Asymptotic choice number for triangle free graphs},
	type = {Technical Report 91--95},
	institution = {DIMACS},
	date = {1996},
}

@article{davies2018average,
  title={On the average size of independent sets in triangle-free graphs},
  author={Davies, Ewan and Jenssen, Matthew and Perkins, Will and Roberts, Barnaby},
  journal={Proceedings of the American Mathematical Society},
  volume={146},
  number={1},
  pages={111--124},
  year={2018}
}

@article{shearer1995independence,
  title={On the independence number of sparse graphs},
  author={Shearer, James B},
  journal={Random Structures \& Algorithms},
  volume={7},
  number={3},
  pages={269--271},
  year={1995},
  publisher={Wiley Online Library}
}

@article{dhawan2025bounds,
  title={Bounds for the Independence and Chromatic Numbers of Locally Sparse Graphs},
  author={Dhawan, Abhishek},
  journal={Annals of Combinatorics},
  pages={1--28},
  year={2025},
  publisher={Springer}
}

@unpublished{J96-Kr,
author = {Johansson, Anders},
title = {The choice number of sparse graphs},
note = {Unpublished Manuscript},
year = {1996}
}

@article{AndersonBernshteynDhawan,
  title={Coloring graphs with forbidden bipartite subgraphs},
  author={Anderson, James and Bernshteyn, Anton and Dhawan, Abhishek},
  journal={Combinatorics, Probability and Computing},
  volume={32},
  number={1},
  pages={45--67},
  year={2023},
  publisher={Cambridge University Press}
}

@article{frieze1990independence,
  title={On the independence number of random graphs},
  author={Frieze, Alan M},
  journal={Discrete Mathematics},
  volume={81},
  number={2},
  pages={171--175},
  year={1990},
  publisher={Elsevier}
}

@article{bonamy2022bounding,
  title={Bounding $\chi$ by a fraction of $\Delta$ for graphs without large cliques},
  author={Bonamy, Marthe and Kelly, Tom and Nelson, Peter and Postle, Luke},
  journal={Journal of Combinatorial Theory, Series B},
  volume={157},
  pages={263--282},
  year={2022},
  publisher={Elsevier}
}

@inproceedings{martinsson2025random,
  title={Random independent sets in triangle-free graphs},
  author={Martinsson, Anders and Steiner, Raphael},
  booktitle={Forum of Mathematics, Sigma},
  volume={13},
  pages={e156},
  year={2025},
  organization={Cambridge University Press}
}

@article{shearer1991note,
  title={A note on the independence number of triangle-free graphs, II},
  author={Shearer, James B},
  journal={Journal of Combinatorial Theory, Series B},
  volume={53},
  number={2},
  pages={300--307},
  year={1991},
  publisher={Elsevier}
}

\end{document}